\DeclareMathOperator{\lcm}{lcm}
\DeclareMathOperator{\Int}{Int}
\DeclareMathOperator{\Cl}{Cl}
\DeclareMathOperator{\disc}{disc}
\DeclareMathOperator{\sign}{sign}
\DeclareMathOperator{\sg}{sg}
\DeclareMathOperator{\Tr}{Tr}
\DeclareMathOperator{\Lk}{Lk}
\DeclareMathOperator{\SL}{SL}
\DeclareMathOperator{\PSL}{PSL}
\DeclareMathOperator{\GL}{GL}
\DeclareMathOperator{\Riv}{Riv}
\newcommand{\Z}{\ensuremath{\mathbb{Z}}}
\newcommand{\R}{\ensuremath{\mathbb{R}}}
\newcommand{\Q}{\ensuremath{\mathbb{Q}}}
\newcommand{\sm}[4]{\ensuremath{\left(\begin{smallmatrix} #1 & #2\\#3 & #4\end{smallmatrix}\right)}}
\newcommand{\genmtx}{\sm{a}{b}{c}{d}}
\newtheorem{introtheorem}{Theorem}
\newtheorem{introproposition}[introtheorem]{Proposition}
\newtheorem{introcorollary}[introtheorem]{Corollary}
\newtheorem{theorem}{Theorem}[section]
\newtheorem{conjecture}[theorem]{Conjecture}
\newtheorem{corollary}[theorem]{Corollary}
\newtheorem{lemma}[theorem]{Lemma}
\newtheorem{proposition}[theorem]{Proposition}
\theoremstyle{definition}
\newtheorem{introdefinition}[introtheorem]{Definition}
\newtheorem{errata}{Post-publication remark}
\newtheorem{algorithm}[theorem]{Algorithm}
\newtheorem{definition}[theorem]{Definition}
\newtheorem{remark}[theorem]{Remark}
\newtheorem{example}[theorem]{Example}
\numberwithin{equation}{section}
\begin{document}

\title[Computing intersections]{Computing intersections of closed geodesics on the modular curve}
\author[J. Rickards]{James Rickards}
\address{McGill University, Montr\'{e}al, Qu\'{e}bec, Canada}
\email{james.rickards@mail.mcgill.ca}
\urladdr{https://www.math.mcgill.ca/rickards/}
\date{\today}
\thanks{This research was supported by an NSERC Vanier Scholarship.}
\thanks{\textbf{Appears in J. Number Theory (2021), \url{https://doi.org/10.1016/j.jnt.2020.11.024}}}
\thanks{\copyright 2021. This manuscript version is made available under the CC-BY-NC-ND 4.0 license \url{http://creativecommons.org/licenses/by-nc-nd/4.0/}}
\subjclass[2020]{Primary 11E16; Secondary 20H10}
\keywords{Indefinite binary quadratic forms, modular geodesics, Conway topograph}
\date{\today}

\begin{abstract}

In a recent work of Duke, Imamo\={g}lu, and T\'{o}th, the linking number of certain links on the space $\SL(2,\Z)\backslash\SL(2,\R)$ is investigated. This linking number has an alternative interpretation as the intersection number of closed geodesics on the modular curve, which is the focus of this paper. By relating the intersection number to a combinatorial computation involving rivers of Conway topographs, an efficient algorithm for computing intersection numbers is produced. A formula for the total intersection of a pair of positive coprime fundamental discriminants is also derived, which can be thought of as a real quadratic analogue of a classical result of Gross and Zagier. The paper ends with numerical computations and distribution questions relating to intersection numbers.
\end{abstract}
\maketitle

\setcounter{tocdepth}{1}
\tableofcontents

\section*{Introduction}

Consider the space $\SL(2,\Z)\backslash\SL(2,\R)$, which is diffeomorphic to the complement of a trefoil knot in $S^3$ (see \cite{Milnor71}). Given a hyperbolic element $\gamma\in\SL(2,\Z)$, one can define the knots $[\tilde{\gamma}_{+}]$ and $[\tilde{\gamma}_{-}]$ (see Section \ref{subseclink}), whose sum is the null-homologous link $[\tilde{\gamma}]$. In Section $3$ of \cite{Ghys07}, Ghys studies the linking number of $[\tilde{\gamma}_{\pm}]$ with the removed trefoil. His answer is expressed in terms of the Rademacher function, which is directly related to the classical Dedekind $\eta$ function. 

The linking number of two distinct links $[\tilde{\sigma}]$ and $[\tilde{\gamma}]$ is considered in \cite{DIT17}. The authors produce similar results to Ghys, by relating their answer to a modular cocyle (as opposed to a modular form). However, the final linking number formula involves somewhat complicated objects, and is not particularly amenable to explicit computation. Along the way, they use a theorem of Birkhoff in \cite{BK17}, which shows that their linking numbers correspond to intersection numbers of modular geodesics. In this paper, we study those intersection numbers, and show that they can be computed from the combinatorial data arising from Conway's topograph. 

Along the way, we demonstrate a connection between intersection numbers and the Gross-Zagier formula, as found in \cite{GZ85}. This connection is further explored in \cite{JR21shim}, where some of the results of this paper are generalized from the classical modular curve case to Shimura curves. In this generalization, analogous intersection numbers are conjecturally related to the work of Darmon and Vonk on explicit class field theory for real quadratic fields, \cite{DV20}.

\subsection*{Overview of the main results}

Let $\Gamma$ be a discrete subgroup of $\PSL(2,\R)$, and let $\mathbb{H}$ denote the upper half plane. The space $\Gamma\backslash\mathbb{H}$ can be given the structure of a Riemann surface, and we will consider the oriented curves on this surface which come from an upper half plane geodesic. Such curves are called \textit{modular geodesics}, and we will concern ourselves with closed modular geodesics. Given two such modular geodesics, we ask the question: how many times do they intersect?

In the case of $\Gamma=\PSL(2,\Z)$, the question can be rephrased in terms of quadratic forms. Let $q(x,y)=Ax^2+Bxy+Cy^2:=[A,B,C]$ be a quadratic form of discriminant $D=B^2-4AC$. The group $\PSL(2,\Z)$ acts on $q$ (on the right) via
\[\gamma\circ q(x,y):=q(ax+by,cx+dy),\text{ where }\gamma=\genmtx.\]
Write $q\sim q'$ if the quadratic forms $q,q'$ are related by an element of $\PSL(2,\Z)$. We can extend the equivalence to $n-$tuples of quadratic forms as follows:
\[(q_1,q_2,\ldots,q_n)\sim_n(q_1',q_2',\ldots,q_n')\text{ if there exists a $\gamma\in\PSL(2,\Z)$ such that }\gamma\circ q_i=q_i'\text{ for all $1\leq i\leq n$.}\]
Given a primitive indefinite binary quadratic form (abbreviated PIBQF) $q$, the \textit{reciprocal} form is $-q$, where all the coefficients are negated. Write $q\not\sim_{\pm} q'$ if $q$ is not equivalent to either $q'$ or $-q'$, and call $q,q'$ a \textit{strongly inequivalent} pair. Note that quadratic forms with distinct discriminants are strongly inequivalent, and the notion of strong inequivalence extends to pairs of equivalence classes.

Given an indefinite quadratic form $q=[A,B,C]$ of discriminant $D>0$, the equation $q(x,1)=0$ has two real solutions, the roots of $q$. Let the \textit{first root} and \textit{second root} be
\[q_f:=\dfrac{-B+\sqrt{D}}{2A},\qquad q_s:=\dfrac{-B-\sqrt{D}}{2A},\]
respectively. In particular, the second root is smaller than the first root if and only if $A>0$. The upper half plane geodesic running from $q_s$ to $q_f$ is denoted by $\ell_q$ and called the \textit{root geodesic}. Let $\gamma_q$ be the \textit{invariant automorph} of $q$ (explicitly given in Definition \ref{defautom}). Since $\gamma_q$ fixes both $q_f$ and $q_s$, $\gamma_q(\ell_q)=\ell_q$. Therefore the open curve $\ell_q$ maps in an infinite-to-one way onto a closed modular geodesic of the modular curve $\Gamma\backslash\mathbb{H}$. Denote the image by $\tilde{\ell_q}$, and observe that it only depends on the equivalence class of $q$. The modular geodesic $\tilde{\ell}_{-q}$ overlaps $\tilde{\ell}_q$, but has the opposite orientation.

\begin{introdefinition}
Let $q_1,q_2$ be a pair of PIBQFs. Their \textit{unweighted intersection number}, denoted by $\Int(q_1,q_2)$, is defined to be
\[|\tilde{\ell}_{q_1}\pitchfork\tilde{\ell}_{q_2}|,\]
the number of transverse intersections of $\tilde{\ell}_{q_1}$ with $\tilde{\ell}_{q_2}$. Note that we are taking the set-theoretic intersection of the curves, and not the homological intersection (which is always $0$ due to the genus of the modular curve being $0$). Furthermore, $\Int(q_1, q_2)$ is well-defined, finite, non-negative, and symmetric.
\end{introdefinition}

We will present two approaches to understanding the behaviour of intersection numbers. The first approach is very algebraic in nature, and lends itself to theoretical counts of intersection numbers. The second approach is combinatorial, and yields an efficient practical algorithm. For the algebraic approach, we first present an alternate characterization of $\Int(q_1,q_2)$.

\begin{introproposition}\label{mainprop1}
The unweighted intersection number of PIBQFs $q_1,q_2$ of discriminants $D_1,D_2$ is the size of the set
\[\left\{(q_1',q_2'):q_1\sim q_1', q_2\sim q_2', |\ell_{q_1'}\pitchfork\ell_{q_2'}|=1\right\}/\sim_2.\]
Furthermore,
\[|\ell_{q_1'}\pitchfork\ell_{q_2'}|=1\Leftrightarrow |B_{\Delta}(q_1',q_2')|<\sqrt{D_1D_2},\]
where $B_{\Delta}([A_1,B_1,C_1],[A_2,B_2,C_2]):=B_1B_2-2A_1C_2-2A_2C_1$.
\end{introproposition}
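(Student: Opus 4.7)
The plan is to prove the two assertions in turn. For the first, I would identify the oriented geodesic lifts of $\tilde{\ell}_q$ in $\mathbb{H}$ with the equivalence class $\{q':q\sim q'\}$ via the map $q'\mapsto \ell_{q'}$. The key identity is $\gamma(\ell_{q'})=\ell_{\gamma^{-1}\circ q'}$, which shows this map is $\PSL(2,\Z)$-equivariant; injectivity follows because a primitive indefinite form is determined by its ordered pair of roots. Pairs of lifts of $(\tilde{\ell}_{q_1},\tilde{\ell}_{q_2})$ are then indexed by pairs $(q_1',q_2')$ with $q_i\sim q_i'$, and the diagonal $\PSL(2,\Z)$-action on such pairs is precisely $\sim_2$. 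Every transverse intersection of $\tilde{\ell}_{q_1}$ and $\tilde{\ell}_{q_2}$ on $\Gamma\backslash\mathbb{H}$ lifts to a transverse crossing of two such lifts in $\mathbb{H}$, and two such crossings project to the same intersection iff the underlying pairs of lifts are $\PSL(2,\Z)$-related. This yields the claimed bijection between the intersections on the modular curve and $\sim_2$-classes of pairs $(q_1',q_2')$ whose root geodesics actually cross, i.e.\ satisfy $|\ell_{q_1'}\pitchfork\ell_{q_2'}|=1$.

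For the second assertion, I would use the standard fact that two distinct geodesics in $\mathbb{H}$ with real boundary points $\{\alpha,\beta\}$ and $\{\gamma,\delta\}$ meet in a single interior point iff the two pairs link on $\partial\mathbb{H}$, equivalently
\[(\alpha-\gamma)(\alpha-\delta)(\beta-\gamma)(\beta-\delta)<0.\]
Taking $\{\alpha,\beta\}$ and $\{\gamma,\delta\}$ to be the roots of $q_1'$ and $q_2'$ respectively, the left-hand side equals $(A_1 A_2)^{-2}$ times the resultant of $q_1'(x,1)$ and $q_2'(x,1)$. A short Sylvester-determinant computation (or direct expansion) identifies this resultant with $\tfrac{1}{4}\bigl(B_\Delta(q_1',q_2')^2-D_1 D_2\bigr)$, so the linking condition translates directly into $B_\Delta^2<D_1 D_2$, as claimed.

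The main obstacle is the bookkeeping in the first part: one must track orientations so that $q$ and $-q$ produce distinct oriented lifts of the common underlying geodesic, check that the map on orbits is genuinely $\PSL(2,\Z)$-equivariant in the form $\gamma(\ell_{q'})=\ell_{\gamma^{-1}\circ q'}$, and verify that each crossing point $p\in\mathbb{H}$ determines the pair of crossing lifts uniquely so that branches are neither over- nor under-counted. The resultant calculation is routine; the only subtlety there is the borderline case $B_\Delta^2=D_1 D_2$, which forces $\ell_{q_1'}$ and $\ell_{q_2'}$ to share a boundary point and hence not to cross transversely, justifying the strict inequality.
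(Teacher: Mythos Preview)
Your argument is correct. For the first assertion your lift-and-quotient bookkeeping is exactly the content of the paper's Proposition~\ref{propsimultaneouspairinterpretation} (reached there through the chain Propositions~\ref{propfourthintersection}--\ref{propdoublecoset}), so the two approaches coincide.

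For the second assertion your route genuinely differs from the paper's. You use the boundary linking criterion $(\alpha-\gamma)(\alpha-\delta)(\beta-\gamma)(\beta-\delta)<0$ and identify this product with $(A_1A_2)^{-2}$ times the resultant, which a Sylvester computation shows equals $\tfrac{1}{4}(B_\Delta^2-D_1D_2)$. The paper instead works with the trace-zero matrices $Z_i=Z_{\gamma_{q_i}}$ and proves (Proposition~\ref{propposdet}) that the root geodesics cross iff $\det(Z_1Z_2-Z_2Z_1)=4\det(Z_1Z_2)-\Tr(Z_1Z_2)^2>0$; plugging in the explicit $Z_i$ then yields $|B_\Delta|<\sqrt{D_1D_2}$ (Proposition~\ref{propsl2zintersect}). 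Your resultant approach is more elementary and gets to the inequality faster; indeed the paper acknowledges essentially this alternative in a remark via the cross-ratio $\frac{B_\Delta-\sqrt{D_1D_2}}{B_\Delta+\sqrt{D_1D_2}}$. The trade-off is that the paper's matrix framework simultaneously delivers the sign of the intersection, the intersection point (as the upper-half-plane fixed point of $Z_1Z_2$), and the intersection angle via $\tan\theta=\sqrt{\det(Z_1Z_2-Z_2Z_1)}/\Tr(Z_1Z_2)$, none of which your resultant computation sees directly.
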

In particular, 
\[\Int(q_1,q_2)=\left|\left\{(q_1',q_2'):q_1\sim q_1', q_2\sim q_2',|B_{\Delta}(q_1',q_2')|<\sqrt{D_1D_2}\right\}/\sim_2\right|.\]

Let $B_{\Delta}(q_1', q_2')=x$, and then the inequality $x^2<D_1D_2$ pops out of Proposition \ref{mainprop1}. This inequality (along with $B_{\Delta}$) is reminiscent of the results of Gross and Zagier on the factorization of difference of $j-$values, found in \cite{GZ85}. Further connections come from Theorem \ref{mainthm1}, which is the real quadratic analogue of Proposition $6.1$ of \cite{GZ85}. Let $D_1,D_2$ be distinct positive discriminants, and
\begin{itemize}
\item Define $\Cl^+(D_i)$ to be the narrow class group of discriminant $D_i$, the set of equivalence classes of primitive quadratic forms of discriminant $D_i$. The size of $\Cl^+(D_i)$ is $h^+(D_i)$, the narrow class number.
\item Let $K=\Q (\sqrt{D_1D_2})$, let $L=\Q (\sqrt{D_1},\sqrt{D_2})$, and define $r_{L/K}(\mathfrak{a})$ to be the number of integral ideals $\mathfrak{A}$ of $L$ for which $N_{L/K}(\mathfrak{A})=\mathfrak{a}$. 
\item Define 
\[S_{D_1,D_2}:=\{n: |n|<\sqrt{D_1D_2}\text{ and }n\equiv D_1D_2\pmod{2}\}.\]
\item For $n\in S_{D_1,D_2}$, define
\[p(n):=\left|\left\{(q_1,q_2):\disc(q_1)=D_1, \disc(q_2)=D_2, B_{\Delta}(q_1,q_2)=n\right\}\diagup\sim_2\right|.\]
\item For $D_1,D_2$ coprime and fundamental, if $p$ is a prime with $\left(\frac{D_1D_2}{p}\right)\neq -1$, define $\epsilon(p)$ be the non-zero value in the set $\left\{\left(\frac{D_1}{p}\right),\left(\frac{D_2}{p}\right)\right\}$. Extend the definition of $\epsilon$ multiplicatively. Note that $\epsilon(p)$ is defined for all prime divisors $p$ of $\frac{D_1D_2-n^2}{4}$ with $n\equiv D_1D_2\pmod{2}$.
\end{itemize}

\begin{introtheorem}\label{mainthm1}
Let $D_1,D_2$ be positive coprime fundamental discriminants, let $n\in S_{D_1,D_2}$, and factorize
\[\dfrac{D_1D_2-n^2}{4}=\prod_{i=1}^{r}p_i^{2e_i+1}\prod_{i=1}^s q_i^{2f_i}\prod_{i=1}^t w_i^{g_i},\]
where the $p_i$ are the primes for which $\epsilon(p_i)=-1$ that appear to an odd power, $q_i$ are the primes for which $\epsilon(q_i)=-1$ that appear to an even power, and $w_i$ are the primes for which $\epsilon(w_i)=1$. Then $p(n)=0$ if and only if $r>0$, and when $r=0$, 
\[p(n)=2\prod_{i=1}^t(g_i+1)=2\sum_{d\mid\frac{D_1D_2-n^2}{4}}\epsilon(d)=2r_{L/K}\left(\left\langle\frac{\sqrt{D_1D_2}-n}{2}\right\rangle\right).\]
The second and third expressions for $p(n)$ are also valid when $r>0$.
\end{introtheorem}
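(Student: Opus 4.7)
The strategy should mirror closely the Gross--Zagier approach in Proposition 6.1 of \cite{GZ85}, adapted to the fact that here both $D_i$ are positive, so $L = \mathbb{Q}(\sqrt{D_1},\sqrt{D_2})$ is totally real while still containing $K = \mathbb{Q}(\sqrt{D_1 D_2})$ as a quadratic subfield. The first step is to set up a bijection between pairs $(q_1,q_2)$ with $B_\Delta(q_1,q_2)=n$ (up to $\sim_2$) and integral ideals $\mathfrak{A}$ of $\mathcal{O}_L$ satisfying $N_{L/K}(\mathfrak{A}) = \langle \alpha \rangle$, where $\alpha = (\sqrt{D_1D_2}-n)/2 \in \mathcal{O}_K$. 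Each $q_i$ of discriminant $D_i$ corresponds under the classical narrow class group correspondence to a fractional ideal $\mathfrak{a}_i$ of $\mathcal{O}_{D_i}$; because $D_1, D_2$ are coprime fundamental discriminants, the extension $\mathfrak{a}_1 \mathfrak{a}_2 \subset \mathcal{O}_L$ is a well-defined $\mathcal{O}_L$-ideal, and an explicit computation using the standard generators $[A_i, (B_i+\sqrt{D_i})/2]$ translates the identity $B_\Delta(q_1,q_2)=n$ into the statement $N_{L/K}(\mathfrak{a}_1 \mathfrak{a}_2) = \langle\alpha\rangle$. I expect the correspondence to be $2$-to-$1$ globally, with the factor of $2$ traceable either to the $\mathrm{Gal}(K/\mathbb{Q})$-conjugate ideal $\langle\bar\alpha\rangle$ yielding the same pair of narrow classes, or equivalently to the sign choice $\pm\sqrt{D_1 D_2}$.

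With the bijection in hand, the problem reduces to evaluating $r_{L/K}(\langle\alpha\rangle)$, which is multiplicative over the prime factorization of $N_{K/\mathbb{Q}}(\alpha) = (D_1D_2-n^2)/4$. The next step is to identify $\epsilon(p)$ with the splitting character of $L/K$ at primes of $K$ above $p$. The coprimality and fundamentality of $D_1, D_2$ guarantee that at most one of $\mathbb{Q}(\sqrt{D_1})$, $\mathbb{Q}(\sqrt{D_2})$ ramifies at any given prime dividing $(D_1D_2-n^2)/4$, so the set $\{(\tfrac{D_1}{p}),(\tfrac{D_2}{p})\}$ contains a unique nonzero value; biquadratic Galois theory then shows this value is $+1$ exactly when $p$ splits in $L/K$ and $-1$ exactly when it is inert. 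Applying the standard local ideal counts for a quadratic extension --- $e+1$ for a split prime of exponent $e$, $1$ for an inert prime of even exponent, and $0$ for an inert prime of odd exponent --- yields the product formula $\prod_{i=1}^t(g_i+1)$ when $r=0$ and the vanishing when $r>0$, giving the claim that $p(n)=0$ iff $r>0$ together with the first formula.

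The remaining identity $\prod_{i=1}^t(g_i+1) = \sum_{d\mid (D_1D_2-n^2)/4} \epsilon(d)$ is a formal consequence of multiplicativity: the local sum $\sum_{j=0}^{k}\epsilon(p)^j$ evaluates to $k+1$, $1$, or $0$ in precisely the three cases distinguished above, so the global sum reproduces the product term by term --- crucially, this also gives $0$ in the $r>0$ case, making the sum-of-divisors and $r_{L/K}$ expressions valid without restriction. The third equality $p(n) = 2\,r_{L/K}(\langle\alpha\rangle)$ is a direct restatement of the bijection from the first paragraph. The main obstacle I anticipate is the first step: constructing the pair-to-ideal correspondence in a way that manifestly respects $\sim_2$, controls the factor of $2$ precisely, and handles the case analysis forced by whether $p\mid D_1$ or $p\mid D_2$ in the local computation of $N_{L/K}$. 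The rest is essentially bookkeeping in algebraic number theory.
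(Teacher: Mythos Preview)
Your plan follows the Gross--Zagier template: attach to each $q_i$ an ideal $\mathfrak{a}_i\subset\mathcal{O}_{D_i}$, push the product to $\mathcal{O}_L$, and reduce $p(n)$ to an ideal-norm count. The paper takes a genuinely different route. It reinterprets each $q_i$ as an embedding $\phi_{q_i}\colon\mathcal{O}_{D_i}\hookrightarrow M_2(\Z)$, so that $B_{\Delta}(q_1,q_2)=n$ becomes the trace condition $\tfrac{1}{2}\Tr\bigl(\phi_{q_1}(\sqrt{D_1})\phi_{q_2}(\sqrt{D_2})\bigr)=n$, and then studies the order $\mathbb{O}\subset M_2(\Q)$ generated by the two images. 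After showing (via local--global) that any two solution pairs are $\SL(2,\Q)$-conjugate, the count $p(n)/2$ is identified with the number of \emph{maximal orders of $M_2(\Q)$ containing $\mathbb{O}$}. This is computed locally: at a prime with $\epsilon=1$ the completion $\mathbb{O}_{w_i}$ is Eichler of level $w_i^{g_i}$ and sits in $g_i+1$ maximal orders, while at a prime with $\epsilon=-1$ and even exponent it sits in exactly one. The factor of $2$ comes from $[\GL(2,\Z):\SL(2,\Z)]$ when conjugating a maximal order back to $M_2(\Z)$, not from a Galois conjugate of $\alpha$.

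The paper in fact explicitly flags your approach as the one it is avoiding: the exact sequence on p.~213 of \cite{GZ85} that controls the fibers of the forms-to-ideals map leans on the unit group of $L$ having rank $1$, whereas here $L$ is totally real with unit rank $3$. This bites exactly at the point you label ``the main obstacle.'' Your map $(q_1,q_2)\mapsto\mathfrak{a}_1\mathfrak{a}_2\mathcal{O}_L$ does not cleanly descend to $\sim_2$: under a simultaneous $\gamma\in\PSL(2,\Z)$, the ideals $\mathfrak{a}_1,\mathfrak{a}_2$ are rescaled by elements of $\mathcal{O}_{D_1}^{\times},\mathcal{O}_{D_2}^{\times}$ that depend on $\gamma$ \emph{and} on the individual $q_i$, so determining which $\mathcal{O}_L$-ideals arise and with what multiplicity is precisely where the infinite unit groups enter. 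You acknowledge this step is the hard one but give no mechanism to resolve it; the quaternionic superorder count is the paper's way of sidestepping it entirely. Your treatment of the remaining identities $\prod(g_i+1)=\sum_{d}\epsilon(d)=r_{L/K}(\langle\alpha\rangle)$ via local splitting in $L/K$ matches the paper's final steps.
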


The proof in \cite{GZ85} does not automatically generalize to our situation. With motivation coming from quaternion algebras, we will sketch a proof of this result (the proof of a more general result appears in \cite{JR21shim}).

Combining Proposition \ref{mainprop1} and Theorem \ref{mainthm1} provides an expression for the total intersection number of forms of positive discriminants $D_1, D_2$, defined as 
\[\Int(D_1,D_2):=\sum_{[q_i]\in\Cl^{+}(D_i)}\Int(q_1,q_2).\]

\begin{introcorollary}\label{maincor1}
Let $D_1, D_2$ be positive coprime fundamental discriminants. Then
\[\Int(D_1,D_2)=\sum_{n\in S_{D_1,D_2}}p(n)=2\sum_{n\in S_{D_1,D_2}}\sum_{d\mid\frac{D_1D_2-n^2}{4}}\epsilon(d).\]
\end{introcorollary}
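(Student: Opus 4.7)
The plan is to obtain Corollary \ref{maincor1} as a fairly direct consequence of Proposition \ref{mainprop1} combined with Theorem \ref{mainthm1}, with the only subtlety being the careful bookkeeping of equivalence classes and the parity check on $n$.

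For the first equality, I would unwind the definition $\Int(D_1,D_2):=\sum_{[q_i]\in \Cl^+(D_i)}\Int(q_1,q_2)$ and substitute the characterization from Proposition \ref{mainprop1}. For fixed representatives $q_1, q_2$ of classes in $\Cl^+(D_1)$ and $\Cl^+(D_2)$, $\Int(q_1,q_2)$ counts $\sim_2$-equivalence classes of pairs $(q_1', q_2')$ with $q_1' \sim q_1$, $q_2' \sim q_2$, and $|B_\Delta(q_1',q_2')|<\sqrt{D_1D_2}$. Summing over all choices of classes, each $\sim_2$-equivalence class of pairs of discriminants $(D_1, D_2)$ with $|B_\Delta|<\sqrt{D_1D_2}$ is counted exactly once, so
\[\Int(D_1,D_2)=\left|\left\{(q_1',q_2'):\disc(q_i')=D_i,\ |B_{\Delta}(q_1',q_2')|<\sqrt{D_1D_2}\right\}/\sim_2\right|.\]
Partitioning this set by the value $n=B_\Delta(q_1',q_2')$ (which is a $\sim_2$-invariant of the pair, since it depends only on the $\PSL(2,\Z)$-orbits when the same element acts on both forms), I obtain $\sum_n p(n)$ where the sum is over integers $n$ in the range $|n|<\sqrt{D_1D_2}$.

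To complete the first equality I need to check that only $n\equiv D_1D_2\pmod 2$ can arise, which is the parity condition defining $S_{D_1,D_2}$. This is a quick calculation: since $D_i = B_i^2 - 4A_iC_i$, one has $B_i \equiv D_i \pmod 2$, so
\[B_\Delta(q_1',q_2')=B_1'B_2'-2A_1'C_2'-2A_2'C_1' \equiv B_1'B_2' \equiv D_1D_2 \pmod{2}.\]
Therefore the sum is exactly $\sum_{n\in S_{D_1,D_2}}p(n)$.

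The second equality is then immediate from Theorem \ref{mainthm1}: the theorem gives $p(n)=2\sum_{d\mid (D_1D_2-n^2)/4}\epsilon(d)$ unconditionally for all $n\in S_{D_1,D_2}$ (the paper explicitly notes that the sum expression remains valid in the vanishing case $r>0$, since the sum over divisors then evaluates to $0$). Substituting this formula into $\sum_{n\in S_{D_1,D_2}}p(n)$ yields the stated identity. There is no real obstacle here beyond carefully verifying the parity congruence and noting that $B_\Delta$ descends to $\sim_2$-orbits; both are routine, and the substantive input is entirely contained in Proposition \ref{mainprop1} and Theorem \ref{mainthm1}.
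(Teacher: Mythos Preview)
Your proposal is correct and follows exactly the route the paper intends: the paper does not give a separate proof of this corollary, stating only that it follows by ``combining Proposition~\ref{mainprop1} and Theorem~\ref{mainthm1},'' and your write-up simply fills in the routine details (the $\sim_2$-invariance of $B_\Delta$ and the parity congruence $n\equiv D_1D_2\pmod 2$) that this combination requires.
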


While Corollary \ref{maincor1} allows the computation of the average intersection number for a pair of discriminants, it has a hard time accessing the individual intersection numbers that form its components. For this, we introduce the combinatorial approach.

Let the topographs of PIBQFs $q_1, q_2$ be $T_1, T_2$ respectively. The $T_i$ are $3-$regular infinite connected trees drawn in the plane, each equipped with a river path $R_i$. The path $R_i$ is an infinite periodic path, which corresponds to the set of forms $[A,B,C]$ similar to $q_i$ with $AC<0$. The periodic part refers to the periodicity of the sequence of left and right branches taken by the path (left and right are canonical as the graph is embedded into the plane). By picking an oriented edge in $T_1$ and $T_2$, there is a natural way to superimpose the topographs on top of each other, which allows one to consider the behaviours of the rivers $R_1$, $R_2$.

\begin{introtheorem}\label{mainthm2}
The intersection number $\Int(q_1, q_2)$ is equal to the number of ways to superimpose the topographs $T_1, T_2$ onto each other so that the river paths $R_1$ and $R_2$ meet and cross, modulo the periods of the rivers.
\end{introtheorem}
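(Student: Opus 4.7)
The plan is to reduce the geometric intersection count to a group-theoretic count via Proposition \ref{mainprop1}, then translate that count into the combinatorial language of rivers.

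First, I apply Proposition \ref{mainprop1} and use the diagonal $\PSL(2,\Z)$-action to choose $q_1'=q_1$ in each $\sim_2$-class. The remaining freedom is $q_2'=\gamma\circ q_2$ for some $\gamma\in\PSL(2,\Z)$, where $\gamma$ is determined modulo left multiplication by the stabilizer $\langle\gamma_{q_1}\rangle$ of $q_1$ and right multiplication by the stabilizer $\langle\gamma_{q_2}\rangle$ of $q_2$. Hence
\[
\Int(q_1,q_2)=\bigl|\bigl\{\gamma\in\langle\gamma_{q_1}\rangle\backslash\PSL(2,\Z)/\langle\gamma_{q_2}\rangle : |\ell_{q_1}\pitchfork\gamma\cdot\ell_{q_2}|=1\bigr\}\bigr|.
\]

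Next, I set up the dictionary between $\mathbb{H}$ and topographs. The Conway topograph is canonically dual to the Farey tessellation of $\mathbb{H}$: vertices of $T_q$ correspond to ideal triangles, and edges of $T_q$ to Farey edges. An edge of $T_q$ lies on $R_q$ exactly when $q$ takes opposite signs on its two adjacent superbases, which is the same condition for the dual Farey edge to be crossed by $\ell_q$. Thus $R_q$ is the combinatorial shadow of $\ell_q$, and the two ends of $R_q$ on the boundary of the tree correspond to the two roots $q_f,q_s\in\partial\mathbb{H}$.

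I then interpret superimposition group-theoretically. Since the oriented planar $3$-regular tree carries a free, transitive $\PSL(2,\Z)$-action on oriented edges, an orientation-preserving identification of $T_1$ with $T_2$ is in bijection with an element of $\PSL(2,\Z)$: the one moving a fixed reference oriented edge. Translating $R_i$ by one period is realized by the action of $\gamma_{q_i}$, so quotienting the set of superimpositions by the two river periods recovers exactly the double coset set above. With this setup, it remains to establish the geometric equivalence $|\ell_{q_1}\pitchfork\gamma\cdot\ell_{q_2}|=1$ if and only if the rivers $R_1$ and $\gamma\cdot R_2$ meet and cross in the superimposed topograph. Since the two geodesics have distinct endpoint pairs, they either are disjoint or meet transversally in one point; the latter occurs iff their endpoints interleave on $\partial\mathbb{H}$. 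Under the boundary correspondence from the dictionary, interleaved endpoints of bi-infinite paths in the planar $3$-regular tree are equivalent to the two paths sharing a non-empty arc of edges and leaving it in opposite cyclic directions at each endpoint, which is precisely the notion of rivers crossing.

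The main obstacle is this last geometric step. Edge cases must be handled carefully: rivers touching at a single vertex without sharing an edge must correspond to geodesics whose endpoints do not interleave; rivers that share a half-infinite ray would correspond to geodesics sharing a boundary endpoint, which cannot occur for forms with distinct discriminants or inequivalent roots; and the "crossing" versus "tangent overlap" distinction must be read off the cyclic order at each vertex of the topograph. Once this bijection is pinned down, combining it with the double coset identification above yields the claimed equality.
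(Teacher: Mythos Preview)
Your double-coset reduction is exactly the paper's Proposition~\ref{propdoublecoset} (you reach it via Proposition~\ref{mainprop1}, the paper states it directly, but the content is identical), and your identification of superimpositions modulo river periods with $\langle\gamma_{q_1}\rangle\backslash\PSL(2,\Z)/\langle\gamma_{q_2}\rangle$ is the same structural move the paper makes in Corollary~\ref{corintriver}. So the architecture of your argument matches the paper.

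Where you diverge is in the proof of the key geometric equivalence ``$\ell_{q_1}$ and $\gamma\cdot\ell_{q_2}$ intersect transversally $\Leftrightarrow$ $R_1$ and $\gamma\cdot R_2$ meet and cross''. You propose to go through the Farey tessellation, identify the rivers with combinatorial shadows of the geodesics, and argue that interleaved endpoints on $\partial\mathbb{H}$ correspond to crossing bi-infinite paths in the planar tree. This is correct in spirit, but as you yourself note, it leaves several edge cases (single-vertex contact, half-infinite overlap, tangent overlap) to be checked case by case. The paper's Proposition~\ref{propintriver} handles this lemma with a much cleaner device: it counts how many of the four sign pairs $(\operatorname{sign} q_1(x,y),\operatorname{sign} q_2(x,y))$ are realized by coprime $(x,y)$. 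Since each river is literally the boundary between positive and negative regions of its form, the superimposed rivers partition the topograph into $4$, $3$, or $2$ sign-regions according as the geodesics cross, are disjoint, or overlap. This argument is a single paragraph and dispatches all your edge cases at once---for instance, rivers touching at a vertex without sharing an edge still only produce three sign-regions, hence no crossing and no geodesic intersection, with no separate analysis needed.

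In short: your proof would work once the boundary-interleaving step is fully written out, but the paper's sign-counting argument is both shorter and avoids the casework you anticipate.
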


Computing the river paths $R_i$ can be done directly from the continued fraction expansion of the first roots of $q_i$, and the computation of the intersection number is a combinatorial function of the river paths.

In Section \ref{sechyper}, we introduce the intersection number in the general case. We give multiple interpretations of the intersection number, and also give a description of the intersection points and angles. In Section \ref{secpsl2z}, we apply the theory to the case of $\Gamma=\PSL (2,\Z)$, and prove Proposition \ref{mainprop1} and Theorem \ref{mainthm1}. In Section \ref{secconway}, we introduce the Conway topograph, and use it to prove Theorem \ref{mainthm2}. In Section \ref{secnumcalc}, we provide some explicit examples, and use our algorithm to produce interesting data on the distribution of intersection numbers.

\subsection*{Acknowledgments}
I would like to thank my advisor Henri Darmon for suggesting this problem and providing excellent feedback along the way. I would also like to thank Jan Vonk for introducing me to the Conway topograph, and engaging in many discussions about this work. I am grateful to the anonymous reviewer who provided many helpful comments and suggestions.

\section{Hyperbolic geometry and intersection numbers}\label{sechyper}

Let $\overline{\mathbb{H}}:=\mathbb{H}\cup\R\cup i\infty$ be the upper half plane with its boundary. For $z_1,z_2\in\overline{\mathbb{H}}$, the geodesic segment connecting $z_1,z_2$ is either a vertical line segment between $z_1$ and $z_2$, or the segment between $z_1$ and $z_2$ of the unique circle with centre on the real line which passes through $z_1$ and $z_2$. Denote this segment by $\ell_{z_1,z_2}$, where we do not include the endpoints $z_1,z_2$. We think of the geodesic as running from $z_1$ to $z_2$, and refer to this notion as the orientation of the geodesic. Define $\dot{\ell}_{z_1,z_2}$ to mean $\ell_{z_1,z_2}\cup\{z_1\}$.

Recall that M\"{o}bius maps act on $\mathbb{H}$ and $\overline{\mathbb{H}}$, and they take geodesic segments to geodesic segments. In particular, if $\gamma\in\SL (2,\Z)$ and $z_1,z_2\in\overline{\mathbb{H}}$, then we have
\[\gamma(\ell_{z_1,z_2})=\ell_{\gamma z_1,\gamma z_2}.\]

When working explicitly with hyperbolic matrices in $\PSL(2,\R)$, we need to lift them to $\SL(2,\R)$. By convention, we will always take the lift with positive trace.

\subsection{Roots of hyperbolic matrices}\label{secrtshyper}
Let $\gamma=\genmtx\in\PSL(2,\R)$ be a hyperbolic matrix, so that the equation $\gamma(x)=x$ has two distinct real roots. We label one root to be the first (attracting) root $\gamma_f$, and the other to be the second (repelling) root $\gamma_s$, via the equations
\[\lim_{n\rightarrow\infty}\gamma^n(x)=\gamma_f,\qquad\lim_{n\rightarrow\infty}\gamma^{-n}(x)=\gamma_s,\]
for any $x\in\mathbb{P}^1(\R)$ that is not a root of $\gamma$. In particular, $\gamma^{-1}$ has the same roots as $\gamma$, but with the first and second roots swapped. The first root can be algebraically expressed as
\begin{align*}
\gamma_f=\begin{cases}
\dfrac{a-d+\sqrt{(a+d)^2-4}}{2c}, & \text{if $c\neq 0$;}\\
\infty,  &\text{if $c=0$ and $a>1$;}\\ 
\dfrac{b}{d-a}, &\text{if $c=0$ and $a<1$.}\end{cases}
\end{align*}

Note that if $\sigma\in\PSL(2,\R)$, then the roots of $\sigma\gamma\sigma^{-1}$ are $\sigma(\gamma_f),\sigma(\gamma_s)$. Furthermore, it follows that the ``firstness'' is preserved, i.e. 
\[(\sigma\gamma\sigma^{-1})_f=\sigma(\gamma_f),\qquad(\sigma\gamma\sigma^{-1})_s=\sigma(\gamma_s).\]

We end this section with a definition of the sign of an intersection. It is not completely canonical, as one could choose the opposite sign everywhere.

\begin{definition}\label{defintsign}
Let $y_1,y_2,z_1,z_2\in\mathbb{P}_1(\R)$ be such that $\ell_1=\ell_{y_1,y_2}$ and $\ell_2=\ell_{z_1,z_2}$ are distinct geodesics intersecting in the upper half plane. Travel along $\ell_1$ from $y_1$ to $y_2$, and consider which side $z_1$ lies on. If it is on the right hand side of $\ell_1$, then the sign of the \textit{ordered} intersection of $\ell_1,\ell_2$, denoted $\sg (\ell_1,\ell_2)$, is $+1$. Otherwise, the sign is $-1$.
\end{definition}

\begin{proposition}
Let $\ell_1,\ell_2$ be geodesics that intersect in a unique point in the upper half plane, and let $\ell_1^{-1}$ denote the geodesic $\ell_1$ run backwards. Then
\[\sg (\ell_1,\ell_2)=-\sg (\ell_2,\ell_1)=-\sg (\ell_1^{-1},\ell_2),\]
i.e. swapping the order of the inputs or travelling along one of the geodesics backwards negates the sign. Furthermore, if $\gamma\in\SL (2,\R)$, then
\[\sg (\ell_1,\ell_2)=\sg (\gamma\ell_1,\gamma\ell_2).\]
\end{proposition}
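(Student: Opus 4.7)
The plan is to first establish the $\SL(2,\R)$-invariance by a topological argument, and then leverage it to reduce the remaining two identities to direct checks in a normalized configuration.

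For invariance, I would observe that every $\gamma \in \PSL(2,\R)$ acts as an orientation-preserving diffeomorphism of $\mathbb{H}$---the Jacobian determinant of a M\"obius transformation with $ad-bc=1$ is $1/|cz+d|^2 > 0$---and extends to an orientation-preserving homeomorphism of $\overline{\mathbb{H}}$. The right component of $\mathbb{H} \setminus \ell_1$ is defined intrinsically from the orientations of $\mathbb{H}$ and of $\ell_1$, so $\gamma$ sends the right component of $\mathbb{H} \setminus \ell_1$ to the right component of $\mathbb{H} \setminus \gamma\ell_1$; passing to boundary accumulation, the same holds for the right arc of $(\R\cup\{\infty\})\setminus\{y_1,y_2\}$. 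Hence $z_1$ lies on the right of $\ell_1$ iff $\gamma(z_1)$ lies on the right of $\gamma\ell_1$, giving $\sg(\gamma\ell_1,\gamma\ell_2) = \sg(\ell_1,\ell_2)$.

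Using this invariance, I would normalize so that the intersection point is $i$ and $\ell_1$ is the imaginary axis oriented from $0$ to $i\infty$. Then $\ell_2$ is forced to be the upper unit semicircle with endpoints $z_1 \in \R \setminus \{0\}$ and $z_2 = -1/z_1$, and $\sg(\ell_1,\ell_2) = +1$ iff $z_1 > 0$. Reversing $\ell_1$ replaces it by the downward-oriented imaginary axis, whose right side becomes $\{x<0\}$, so $\sg(\ell_1^{-1},\ell_2) = -\sg(\ell_1,\ell_2)$. For the swap identity, I would parametrize $\ell_2$ as $\theta \mapsto e^{i\theta}$, compute the oriented tangent at $i$ (which points in the $-x$ direction when $z_1 > 0$), and determine which side of $\ell_2$ contains $y_1 = 0$; a direct check shows $0$ lies on the opposite side from the one containing $z_1$ relative to $\ell_1$, so the sign flips.

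The main obstacle is purely bookkeeping of orientation conventions: ensuring that ``right of $\ell_1$ in $\mathbb{H}$'' and ``right arc of $\partial\mathbb{H}$'' are coherently linked via boundary accumulation, and that the orientation of $\overline{\mathbb{H}}$ is genuinely preserved under $\PSL(2,\R)$. No deep mathematical difficulty arises beyond these standard facts and the elementary observation that swapping or reversing a pair of vectors flips the orientation of their span---indeed, one could alternatively phrase the whole proof by first showing that $\sg(\ell_1, \ell_2)$ coincides (up to a universal sign) with the sign of $\det(v_1 \mid v_2)$, where $v_1, v_2$ are the oriented tangent vectors at the intersection point, after which all three identities are immediate from linear algebra.
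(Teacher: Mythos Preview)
Your argument is correct, but it is organized differently from the paper's. The paper disposes of the two sign-flip identities in one line (``immediate from the definition''), since reversing $\ell_1$ swaps left and right along it, and swapping the roles of $\ell_1,\ell_2$ is the standard antisymmetry of oriented intersection; it then proves $\SL(2,\R)$-invariance by checking the generators $\sm{0}{1}{-1}{0}$ and $\sm{1}{x}{0}{1}$ directly. You invert the order: you first establish invariance conceptually via the fact that M\"obius maps are orientation-preserving on $\overline{\mathbb{H}}$, and then exploit this to normalize the configuration (intersection at $i$, $\ell_1$ the upward imaginary axis) before verifying the two sign flips by hand. Your invariance argument is cleaner and more robust---it requires no case analysis and would survive a change of generating set---whereas the paper's generator check is more elementary and self-contained. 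Conversely, your treatment of the sign-flip identities does more work than is strictly necessary; the alternative you sketch at the end, identifying $\sg(\ell_1,\ell_2)$ with the sign of $\det(v_1\mid v_2)$ at the intersection point, would recover the paper's ``immediate'' one-liner while keeping your conceptual invariance proof.
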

\begin{proof}
The first result is immediate from the definition of the sign. The second half only needs to be checked for a set of generators of $\SL(2,\R)$. A suitable set of generators is $\sm{0}{1}{-1}{0}\cup\left\{\sm{1}{x}{0}{1}:x\in\R\right\}$, and this is a straightforward check.
\end{proof}

\subsection{The intersection number}
Let $\Gamma$ be a discrete subgroup of $\PSL(2,\R)$. If $z_1,z_2\in\mathbb{P}_1(\R)$, then $\ell_{z_1,z_2}$ generally does not project to a closed curve in $\Gamma\backslash\mathbb{H}$. To get closed curves, take $\gamma\in\Gamma$ to be hyperbolic, and define
\[\ell_{\gamma}:=\ell_{\gamma_s,\gamma_f},\]
the root geodesic corresponding to the fixed points of $\gamma$. Note that this geodesic is preserved by $\gamma$, and therefore when we descend to $\Gamma\backslash\mathbb{H}$ this descends to $\tilde{\ell_{\gamma}}$, a closed curve. 

For any $\sigma\in\Gamma$, 
\[\ell_{\sigma\gamma\sigma^{-1}}=\ell_{\sigma(\gamma_s),\sigma(\gamma_f)}=\sigma(\ell_{\gamma}),\]
whence
\[\tilde{\ell}_{\sigma\gamma\sigma^{-1}}=\tilde{\ell}_{\gamma}.\]
Therefore, geodesics coming from a hyperbolic conjugacy class of $\Gamma$ all descend to the same closed modular geodesic in $\Gamma\backslash\mathbb{H}$.

\begin{remark}\label{rmkgeod}
We will be taking $\gamma\in\Gamma$ to be \textit{primitive}, i.e. $\gamma\neq\sigma^n$ for any $\sigma\in\Gamma$, $n\geq 2$. This will ensure that
\[\left\{\sigma\in\Gamma:\sigma(\gamma_f)=\gamma_f, \text{ and }\sigma(\gamma_s)=\gamma_s\right\}=\left\{\sigma\in\Gamma:\sigma(\ell_{\gamma})=\ell_{\gamma}\right\}=\gamma^{\Z}.\]
In particular, points on the curve $\tilde{\ell_{\gamma}}$ are in bijection with points on $\dot{\ell}_{Q,\gamma(Q)}$ for any $Q\in\ell_{\gamma}$.
\end{remark}

\begin{definition}
The pair $\gamma_1,\gamma_2\in\Gamma$ is called a \textit{strongly inequivalent} pair if $\gamma_1$ is not conjugate to either $\gamma_2$ or $\gamma_2^{-1}$ in $\Gamma$. This definition extends to pairs of $\Gamma$-conjugacy classes of matrices.
\end{definition}

Note that if $\gamma_1$ and $\gamma_2$ are not strongly inequivalent, and there is a point in space that is passed through $n$ times by their closed geodesic, then this contributes $n(n-1)$ transverse intersection points.

\begin{definition}\label{defintnum}
Given a pair of primitive hyperbolic matrices $\gamma_1,\gamma_2\in\Gamma$ and a function $f$, their weighted intersection number is defined to be
\[\Int_{\Gamma}^f(\gamma_1,\gamma_2):=\sum_{p\in\tilde{\ell}_{\gamma_1}\pitchfork\tilde{\ell}_{\gamma_2}}f(\tilde{\ell}_{\gamma_1},\tilde{\ell}_{\gamma_2},p).\]
\end{definition}

The most natural choices of $f$ are $f=1$, the \textit{unweighted intersection number}, and $f$ equals the sign of the intersection, the \textit{signed intersection number}. These choices are denoted by $\Int(\gamma_1,\gamma_2)$ and $\Int^{\pm}(\gamma_1,\gamma_2)$ respectively. The advantage of the signed intersection number is it is now well defined in homology of the surface, whereas the unsigned depends on the actual geodesics. However, when the genus of $\Gamma\backslash\mathbb{H}$ is $0$ (for example $\Gamma=\PSL(2,\Z)$, the main object of study in this paper), this means that the signed intersection number is always zero!

\subsection{Alternative interpretations of the intersection number}\label{subsecalternate}
When working with intersection numbers, removing the need for the quotient space $\Gamma\backslash\mathbb{H}$ makes matters more tractable.

Let $\gamma_1,\gamma_2$ be a pair of primitive hyperbolic matrices. Pick any $z\in\ell_{\gamma_2}$, and the curve $\tilde{\ell}_{\gamma_2}$ lifts uniquely and bijects with $\dot{\ell}_{z,\gamma_2(z)}$. Each intersection point will lift to a unique pair $(\ell,P)$, where $\ell$ is $\Gamma-$conjugate to $\ell_{\gamma_1}$, and $P$ is the intersection of $\ell$ with $\dot{\ell}_{z,\gamma_2(z)}$. This is formalized in the following proposition.

\begin{proposition}\label{propfourthintersection}
Let $\gamma_1,\gamma_2,z$ be as above. Then
\[\Int_{\Gamma}^f(\gamma_1,\gamma_2)=\sum_{\substack{\gamma\sim\gamma_1\\ |\ell_{\gamma}\pitchfork\dot{\ell}_{z,\gamma_2z}|=1}}f(\gamma,\gamma_2).\]
\end{proposition}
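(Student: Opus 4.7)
The plan is to produce a weight-preserving bijection between transverse intersections $p\in\tilde\ell_{\gamma_1}\pitchfork\tilde\ell_{\gamma_2}$ on the quotient and $\Gamma$-conjugates $\gamma\sim\gamma_1$ whose root geodesic meets the fundamental arc $\dot\ell_{z,\gamma_2 z}$ (necessarily in one point, since distinct geodesics in $\mathbb{H}$ meet at most once).

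First I would invoke Remark \ref{rmkgeod}: the projection $\pi\colon\mathbb{H}\to\Gamma\backslash\mathbb{H}$ restricts to a bijection between $\dot\ell_{z,\gamma_2 z}$ and $\tilde\ell_{\gamma_2}$, so each transverse intersection $p$ lifts uniquely to some $P\in\dot\ell_{z,\gamma_2 z}$. Next, since $\pi^{-1}(\tilde\ell_{\gamma_1})$ is exactly the union of $\Gamma$-translates of $\ell_{\gamma_1}$, the local branch of $\tilde\ell_{\gamma_1}$ through $p$ lifts at $P$ to a unique oriented geodesic of the form $\sigma(\ell_{\gamma_1})=\ell_{\sigma\gamma_1\sigma^{-1}}$ for some $\sigma\in\Gamma$; set $\gamma:=\sigma\gamma_1\sigma^{-1}$. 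This assignment $p\mapsto \gamma$ is the candidate map.

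The crux is showing $\gamma$ depends only on $P$ (equivalently, only on the lifted geodesic), not on the choice of $\sigma$. If $\sigma(\ell_{\gamma_1})=\sigma'(\ell_{\gamma_1})$ as oriented geodesics, then $\sigma^{-1}\sigma'$ lies in the $\Gamma$-stabilizer of $\ell_{\gamma_1}$; by Remark \ref{rmkgeod}, primitivity of $\gamma_1$ forces $\sigma^{-1}\sigma'=\gamma_1^n$ for some $n\in\Z$, and then $\sigma'\gamma_1(\sigma')^{-1}=\sigma\gamma_1\sigma^{-1}=\gamma$. This is the step I expect to be the main technical obstacle, since the whole correspondence collapses if $\gamma_1$ is an imprimitive power.

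Having established well-definedness, I would construct the inverse map: given $\gamma\sim\gamma_1$ with $\ell_\gamma\cap\dot\ell_{z,\gamma_2 z}=\{P\}$, the point $p:=\pi(P)$ clearly lies in $\tilde\ell_{\gamma_1}\pitchfork\tilde\ell_{\gamma_2}$, and applying the forward construction recovers $\gamma$. Since any two geodesics in $\mathbb{H}$ meet in at most one point, the condition $|\ell_\gamma\pitchfork\dot\ell_{z,\gamma_2 z}|=1$ in the sum is automatic whenever there is any transverse intersection, so the indexing set on the right matches the lifts we produced. Finally, to pass to the weighted version, I would note that $f$ depends only on the local geometric data (the two oriented geodesic branches and the intersection point), and this data transports identically from $(p,\tilde\ell_{\gamma_1},\tilde\ell_{\gamma_2})$ to $(P,\ell_\gamma,\ell_{\gamma_2})$ under the bijection, so the weights match term by term.
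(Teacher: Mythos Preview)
Your proposal is correct and follows the same approach as the paper. In fact, the paper does not give a formal proof of this proposition at all: it merely sketches the argument in the paragraph immediately preceding the statement (lift $\tilde\ell_{\gamma_2}$ to the fundamental arc $\dot\ell_{z,\gamma_2 z}$ via Remark~\ref{rmkgeod}, then lift each intersection to a unique pair $(\ell,P)$ with $\ell$ a $\Gamma$-conjugate of $\ell_{\gamma_1}$). Your write-up is a careful expansion of exactly that sketch, including the well-definedness check via primitivity that the paper leaves implicit.
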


Let $\Gamma_i:=\gamma_i^{\Z}$ be the automorph group of $\ell_{\gamma_i}$ inside $\Gamma$ for $i=1,2$. Instead of the condition that $\gamma$ is conjugate to $\gamma_1$, we could set $\gamma=\sigma\gamma_1\sigma^{-1}$ for a unique $\sigma\in\Gamma/\Gamma_1$. Similarly, the intersection point lying on $\dot{\ell}_{z,\gamma_2z}$ can be lifted to $\ell_{\gamma_2}$ by passing to the double coset $\sigma\in\Gamma_2\backslash\Gamma/\Gamma_1$. This gives us the next interpretation.

\begin{proposition}[Double coset interpretation]\label{propdoublecoset}
Let $\gamma_1,\gamma_2\in\Gamma$ be a pair of primitive hyperbolic matrices. Then
\[\Int_{\Gamma}^f(\gamma_1,\gamma_2)=\sum_{\substack{\tilde{\sigma}\in\Gamma_2\backslash\Gamma/\Gamma_1\\ |\ell_{\sigma\gamma_1\sigma^{-1}}\pitchfork\ell_{\gamma_2}|=1}}f(\sigma\gamma_1\sigma^{-1},\gamma_2).\]
\end{proposition}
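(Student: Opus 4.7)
The plan is to deduce Proposition \ref{propdoublecoset} directly from Proposition \ref{propfourthintersection} by repackaging the parametrization of the $\Gamma$-conjugates of $\gamma_1$ and then using the $\Gamma_2$-action to fold the fundamental segment $\dot{\ell}_{z,\gamma_2 z}$ back into the full geodesic $\ell_{\gamma_2}$.

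First I would recall that because $\gamma_1$ is primitive hyperbolic, its centralizer in $\Gamma$ coincides with $\Gamma_1 = \gamma_1^{\Z}$: indeed any element commuting with $\gamma_1$ must fix the pair $\{\gamma_{1,f},\gamma_{1,s}\}$ pointwise (by the preservation of ``firstness'' from Section \ref{secrtshyper}), so it preserves the oriented geodesic $\ell_{\gamma_1}$, and Remark \ref{rmkgeod} identifies this stabilizer with $\gamma_1^{\Z}$. Consequently the $\Gamma$-conjugates of $\gamma_1$ are naturally indexed by cosets $\sigma\in\Gamma/\Gamma_1$, with $\ell_{\sigma\gamma_1\sigma^{-1}}=\sigma(\ell_{\gamma_1})$, and Proposition \ref{propfourthintersection} rewrites as
\[\Int_{\Gamma}^{f}(\gamma_1,\gamma_2)=\sum_{\substack{\sigma\in\Gamma/\Gamma_1\\ |\sigma(\ell_{\gamma_1})\pitchfork\dot{\ell}_{z,\gamma_2 z}|=1}}f(\sigma\gamma_1\sigma^{-1},\gamma_2).\]

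Next I would use that $\Gamma_2=\gamma_2^{\Z}$ acts on $\ell_{\gamma_2}$ by hyperbolic translation, and (again by Remark \ref{rmkgeod} applied to $\gamma_2$) the translates $\{\gamma_2^{n}(\dot{\ell}_{z,\gamma_2 z})\}_{n\in\Z}$ partition $\ell_{\gamma_2}$ freely. Now given any $\sigma\in\Gamma/\Gamma_1$ such that $\sigma(\ell_{\gamma_1})$ meets $\ell_{\gamma_2}$ transversely in a single point $P$ (uniqueness of the intersection being automatic because two distinct hyperbolic geodesics cross at most once), there is a unique $n\in\Z$ with $\gamma_2^{-n}(P)\in\dot{\ell}_{z,\gamma_2 z}$; that $n$ depends only on the class of $\sigma$ in $\Gamma_2\backslash\Gamma/\Gamma_1$, and replacing $\sigma$ by $\gamma_2^{-n}\sigma$ produces a canonical representative of the double coset satisfying $|\sigma(\ell_{\gamma_1})\pitchfork\dot{\ell}_{z,\gamma_2 z}|=1$. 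This sets up a bijection between the single-coset indexing set above and $\{\tilde\sigma\in\Gamma_2\backslash\Gamma/\Gamma_1 : |\ell_{\sigma\gamma_1\sigma^{-1}}\pitchfork\ell_{\gamma_2}|=1\}$; surjectivity is the canonical-representative construction, while injectivity follows from the freeness of the tiling.

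Finally I would check that the summand $f(\sigma\gamma_1\sigma^{-1},\gamma_2)$, understood as $f$ evaluated at the projected intersection point on $\tilde{\ell}_{\gamma_1}\pitchfork\tilde{\ell}_{\gamma_2}$, is invariant under replacing $\sigma$ by $\gamma_2^{m}\sigma$: such a replacement translates $\sigma(\ell_{\gamma_1})$ by $\gamma_2^{m}\in\Gamma$ and hence yields the same projection to $\Gamma\backslash\mathbb{H}$ and the same intersection point. Thus $f$ descends to a function on $\Gamma_2\backslash\Gamma/\Gamma_1$, and summing the bijection above gives exactly the double-coset formula. The only real technical obstacle is ensuring the tiling of $\ell_{\gamma_2}$ by $\Gamma_2$-translates of $\dot{\ell}_{z,\gamma_2 z}$ is free and covers $\ell_{\gamma_2}$; this is the content of primitivity for $\gamma_2$, and the remaining work is just careful bookkeeping.
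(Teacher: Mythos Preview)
Your argument is correct and follows exactly the route the paper sketches in the paragraph preceding the proposition: parametrize conjugates of $\gamma_1$ by $\Gamma/\Gamma_1$, then quotient by $\Gamma_2$ on the left to unfold the fundamental arc $\dot{\ell}_{z,\gamma_2 z}$ into all of $\ell_{\gamma_2}$. One small slip: the integer $n$ you produce does \emph{not} depend only on the double coset of $\sigma$ (replacing $\sigma$ by $\gamma_2^{m}\sigma$ shifts $n$ to $n+m$); what is invariant is the resulting coset $\gamma_2^{-n}\sigma\in\Gamma/\Gamma_1$, which is all you actually use.
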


A way to rephrase the above proposition is we are looking for intersecting root geodesics of conjugates of $\gamma_1,\gamma_2$ modulo the automorphs. A cleaner interpretation is the following proposition.

\begin{proposition}\label{propsimultaneouspairinterpretation}
Let $\mathcal{C}_1,\mathcal{C}_2$ be a pair of primitive hyperbolic $\Gamma-$conjugacy classes. Define an equivalence relation on $\mathcal{C}_1\times\mathcal{C}_2$ via $(\sigma_1,\sigma_2)\sim(\alpha\sigma_1\alpha^{-1},\alpha\sigma_2\alpha^{-1})$ for all $\alpha\in\Gamma$. Then
\[\Int_{\Gamma}^f(\mathcal{C}_1,\mathcal{C}_2)=\sum_{\substack{(\sigma_1,\sigma_2)\in(\mathcal{C}_1\times\mathcal{C}_2)/\sim \\ |\ell_{\sigma_1}\pitchfork\ell_{\sigma_2}|=1}}f(\sigma_1,\sigma_2).\]
\end{proposition}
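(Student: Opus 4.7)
The plan is to deduce the result directly from the double coset interpretation of Proposition \ref{propdoublecoset} by constructing an explicit bijection between the two index sets. Fix representatives $\gamma_1 \in \mathcal{C}_1$ and $\gamma_2 \in \mathcal{C}_2$ and set $\Gamma_i := \gamma_i^{\Z}$. The natural candidate map is
\[\Phi : \Gamma_2\backslash\Gamma/\Gamma_1 \longrightarrow (\mathcal{C}_1 \times \mathcal{C}_2)/\!\sim, \qquad [\sigma] \longmapsto \bigl[(\sigma\gamma_1\sigma^{-1}, \gamma_2)\bigr].\]
I will show $\Phi$ is a bijection that respects both the intersection condition and the value of $f$; feeding this identification into Proposition \ref{propdoublecoset} then immediately yields the claim.

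For well-definedness, replacing $\sigma$ by $\gamma_2^a \sigma \gamma_1^b$ fixes $\gamma_2$ and sends the first coordinate to $\gamma_2^a(\sigma\gamma_1\sigma^{-1})\gamma_2^{-a}$, which equals the $\sim$-image of the original pair under $\alpha = \gamma_2^a$. Surjectivity is equally routine: given $(\sigma_1,\sigma_2) \in \mathcal{C}_1 \times \mathcal{C}_2$, choose $\beta \in \Gamma$ with $\beta\sigma_2\beta^{-1} = \gamma_2$ so that $(\sigma_1,\sigma_2) \sim (\beta\sigma_1\beta^{-1}, \gamma_2)$, and then pick any $\sigma \in \Gamma$ with $\sigma\gamma_1\sigma^{-1} = \beta\sigma_1\beta^{-1}$.

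The main step will be injectivity, and this is where the primitivity hypothesis enters. Suppose $\alpha \in \Gamma$ satisfies $\alpha \sigma\gamma_1\sigma^{-1}\alpha^{-1} = \sigma'\gamma_1\sigma'^{-1}$ and $\alpha\gamma_2\alpha^{-1} = \gamma_2$. From the second equation, $\alpha$ centralizes $\gamma_2$; since $\gamma_2$ is hyperbolic with two distinct fixed points, $\alpha$ must permute its roots, and a swap would force $\alpha\gamma_2\alpha^{-1} = \gamma_2^{-1}$, contradicting centralization. Hence $\alpha$ fixes both $(\gamma_2)_f$ and $(\gamma_2)_s$, so Remark \ref{rmkgeod} (invoking primitivity) gives $\alpha \in \Gamma_2$. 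The first equation then rearranges to the statement that $\sigma'^{-1}\alpha\sigma$ centralizes $\gamma_1$, and the same argument applied to $\gamma_1$ places it in $\Gamma_1$. Together these show that $\sigma$ and $\sigma'$ represent the same double coset.

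It remains to check that both the intersection cardinality $|\ell_{\sigma_1}\pitchfork\ell_{\sigma_2}|$ and the value of $f$ are constant on $\sim$-classes. Root geodesics transform $\Gamma$-equivariantly, so $\ell_{\alpha\sigma_i\alpha^{-1}} = \alpha\ell_{\sigma_i}$, from which invariance of the cardinality is immediate; the function $f$ descends from $\Gamma\backslash\mathbb{H}$ and so also only sees $\sim$-classes. Thus $\Phi$ is a summand-preserving bijection and the formula follows. The only substantive technical ingredient is the centralizer computation via primitivity; the rest is bookkeeping.
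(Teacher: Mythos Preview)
Your proof is correct and is precisely the approach the paper has in mind: the paper does not write out a proof of this proposition, but presents it immediately after Proposition~\ref{propdoublecoset} as ``a cleaner interpretation'' of the same statement, so your bijection $\Phi$ between $\Gamma_2\backslash\Gamma/\Gamma_1$ and $(\mathcal{C}_1\times\mathcal{C}_2)/\!\sim$ is exactly the intended reformulation. Your use of Remark~\ref{rmkgeod} for the centralizer computation is the right ingredient and the details are all in order.
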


\subsection{Intersection point and angle}
By lifting an intersection point into the upper half plane, we get a $\PSL(2,\Z)$-equivalence class of points. Furthermore, since M\"{o}bius maps preserve angles, the intersection point corresponds to a unique angle. This motivates studying the intersection point and angle of pairs of $\SL(2,\R)$ matrices.

\begin{definition}
For any matrix $M\in \SL (2,\R)$, define $Z_M=M-\frac{\text{Tr}(M)}{2}\text{Id}$ to be the unique matrix of trace $0$ related to $M$ by a multiple of the identity matrix. Note that this descends to $M\in\PSL (2,\R)$.
\end{definition}

If $q$ is a PIBQF, note that $Z_{\gamma_q}$ (where $\gamma_q$ is the invariant automorph, as will be defined in Definition \ref{defautom}) is a scalar multiple of a simple matrix with small entries, whereas $\gamma_q$ is not (especially if Pell's equation has a large solution).

\begin{proposition}\label{propposdet}
Let $M_1,M_2\in \SL(2,\R)$ be hyperbolic matrices with corresponding root geodesics $\ell_1,\ell_2$, and let $Z_{M_i}=Z_i$ for $i=1,2$. Then
\begin{enumerate}[label=(\roman*)]
\item $\ell_1,\ell_2$ intersect uniquely in the upper half plane if and only if
\[\det(M_1M_2-M_2M_1)>0.\]
\item In all cases,
\[\det(M_1M_2-M_2M_1)=\det(Z_1Z_2-Z_2Z_1)=4\det(Z_1Z_2)-(\Tr(Z_1Z_2))^2.\]
\item If $\ell_1,\ell_2$ intersect uniquely in the upper half plane, then 
\begin{enumerate}
\item the sign of the intersection is given by
\[\sign ((M_1M_2-M_2M_1)_{21})=\sign ((Z_1Z_2-Z_2Z_1)_{21}).\]
\item the intersection point is the fixed point of $Z_1Z_2$ that lies in the upper half plane.
\item the intersection angle $\theta$ (measured counterclockwise from the tangent to $\ell_1$ to the tangent to $\ell_2$) satisfies 
\[\tan(\theta)=\dfrac{\sqrt{\det(Z_1Z_2-Z_2Z_1)}}{\Tr(Z_1Z_2)}.\]
\end{enumerate}
\end{enumerate}
\end{proposition}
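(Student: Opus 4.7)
I will handle the parts in the order (ii), (i), (iii)(b), (iii)(c), (iii)(a), since each relies on those preceding it. For part (ii), write $M_i = Z_i + \frac{\Tr(M_i)}{2} I$; the scalar parts commute, so $M_1 M_2 - M_2 M_1 = Z_1 Z_2 - Z_2 Z_1$, yielding the first equality. For the second, combine the Cayley--Hamilton identity $W + W^{\mathrm{adj}} = \Tr(W) I$ for $2 \times 2$ matrices with the fact that any trace-zero matrix $Z$ satisfies $Z^2 = -\det(Z) I$. Together these give $Z_2 Z_1 = (Z_1 Z_2)^{\mathrm{adj}} = \Tr(Z_1 Z_2) I - Z_1 Z_2$, so $Z_1 Z_2 - Z_2 Z_1 = 2 Z_1 Z_2 - \Tr(Z_1 Z_2) I$, and then $\det(2 W - \Tr(W) I) = 4 \det(W) - (\Tr(W))^2$ completes the verification.

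For part (i), identify the space of trace-zero $2 \times 2$ real matrices with Minkowski space $\R^{2,1}$ via the quadratic form $q(Z) = -\det(Z)$; the hyperbolicity of $M_i$ is exactly $q(Z_i) > 0$, and the root geodesic $\ell_{M_i}$ is encoded by the line $\R Z_i$. In this model two spacelike vectors correspond to intersecting geodesics in $\mathbb{H}$ exactly when $\langle Z_1, Z_2 \rangle^2 < q(Z_1) q(Z_2)$ for the Lorentzian pairing $\langle Z_1, Z_2 \rangle = \tfrac{1}{2} \Tr(Z_1 Z_2)$. Rearranging yields $4 \det(Z_1 Z_2) - (\Tr(Z_1 Z_2))^2 > 0$, which by part (ii) is $\det(M_1 M_2 - M_2 M_1) > 0$.

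For (iii)(b) and (iii)(c), the guiding observation is that $Z_i$ acts as a geodesic reflection: since $\det(Z_i) < 0$, the rescaling $Z_i / \sqrt{-\det(Z_i)}$ has determinant $-1$ and acts as an orientation-reversing isometry of $\mathbb{H}$, and because it fixes both endpoints of $\ell_i$ it must be the reflection across $\ell_i$. Hence $Z_1 Z_2$ realizes the composition of reflections across $\ell_1$ and $\ell_2$, which is the hyperbolic rotation about the intersection point $P$; part (i) ensures $Z_1 Z_2$ is elliptic, so $P$ is its unique fixed point in $\mathbb{H}$, giving (iii)(b). For (iii)(c), this composition is rotation by $2\theta$, and for any elliptic $W \in \GL^+(2,\R)$ rotating by $\phi$ one has $\cos(\phi/2) = \Tr(W)/(2\sqrt{\det(W)})$. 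Taking $W = Z_1 Z_2$ and $\phi = 2\theta$, computing $\sin^2 \theta$, and applying part (ii) to rewrite $4 \det(Z_1 Z_2) - (\Tr(Z_1 Z_2))^2$ as $\det(Z_1 Z_2 - Z_2 Z_1)$ recovers the stated formula for $\tan \theta$.

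For part (iii)(a), the inner equality is immediate from $M_1 M_2 - M_2 M_1 = Z_1 Z_2 - Z_2 Z_1$. To match $\sign((Z_1 Z_2 - Z_2 Z_1)_{21})$ with $\sg(\ell_1, \ell_2)$, I would reduce to the canonical form $M_1 = \sm{\lambda}{0}{0}{1/\lambda}$ with $\lambda > 1$, in which $\ell_1$ is the upward-oriented imaginary axis and both sides of the equality become short computations in the entries of $M_2$. The hard step is justifying this reduction: while $\sg(\ell_1, \ell_2)$ is conjugation-invariant, the $(2,1)$ entry of $X := Z_1 Z_2 - Z_2 Z_1$ patently is not. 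The crux is to observe that, writing $X = \sm{p}{q}{r}{-p}$, the binary quadratic form $\Phi(c,d) := r d^2 + 2 p c d - q c^2$ has discriminant $4(p^2 + q r) = -4 \det(X)$, which is negative by part (i); hence $\Phi$ is definite with $\sign \Phi = \sign(r)$, and a short computation identifies $\Phi(c,d)$ as exactly the $(2,1)$ entry of $\gamma X \gamma^{-1}$ for $\gamma = \sm{a}{b}{c}{d} \in \SL(2,\R)$. This makes $\sign((X)_{21})$ conjugation-invariant and licenses the reduction, completing the proof.
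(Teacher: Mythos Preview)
Your proof is correct and, for parts (i), (iii)(b), and (iii)(c), takes a genuinely different route from the paper. The paper conjugates $M_2$ to diagonal form and then carries out explicit coordinate computations throughout: writing down the semicircle equation for $\ell_1$, solving for the intersection with the imaginary axis, and differentiating to get the tangent slope. You instead invoke the Lorentzian model of $\mathbb{H}$ for (i), and for (iii)(b)--(c) you use the geometric fact that the product of two geodesic reflections is a rotation by twice the angle between them. This is more conceptual and explains \emph{why} $Z_1Z_2$ is the right object to look at, whereas the paper's approach is more self-contained and elementary. Parts (ii) and (iii)(a) are essentially the same in both: the adjugate/Cayley--Hamilton manipulation is packaged slightly differently, and your conjugation-invariance argument for the sign via the definite form $\Phi(c,d)=rd^2+2pcd-qc^2$ is exactly the paper's.

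One point deserves a sentence of clarification. You say that $Z_i/\sqrt{-\det Z_i}$, having determinant $-1$, ``acts as an orientation-reversing isometry of $\mathbb{H}$'' and is the reflection across $\ell_i$. Under the usual linear-fractional action a real matrix of negative determinant sends $\mathbb{H}$ to the lower half-plane, so this only makes sense if you let such matrices act anti-holomorphically via $z\mapsto (a\bar z+b)/(c\bar z+d)$ (the standard $\PGL(2,\R)$ action on $\mathbb{H}$). With that convention your reflection argument goes through verbatim, and the composition $Z_1Z_2$ acts holomorphically as the ordinary M\"obius map of the matrix product, so the rest of your computation is unaffected. It would strengthen the write-up to make this explicit.
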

\begin{proof}
Consider conjugating $M_1,M_2$ by some $N\in\SL(2,\R)$, i.e. do
\[M_i\rightarrow NM_iN^{-1}\text{ for }i=1,2.\]
Then $\ell_i$ is taken to $N\ell_i$ and $Z_i$ is taken to $NZ_iN^{-1}$. It follows that proving the theorem for $M_1,M_2$ is equivalent to proving it for $NM_1N^{-1},NM_2N^{-1}$, except for possibly the sign of intersection (which will be treated in due course). Therefore we can replace $M_1,M_2$ by the conjugated pair, and since $M_1,M_2$ are diagonalizable over $\R$, choose $N$ to diagonalize $M_2$. Thus it can be assumed that
\[M_1=\left(\begin{matrix}a&b\\c&d\end{matrix}\right),\qquad M_2=\left(\begin{matrix}e&0\\0&\frac{1}{e}\end{matrix}\right),\]
for real numbers $a,b,c,d,e$, with $e>1$ and $ad-bc=1$. The root geodesic corresponding to $M_2$ is $\ell_{\infty,0}$, so the root geodesic of $M_1$ intersects this if and only if the product of the roots of $M_1$ is negative. This product is $\frac{-b}{c}$, so the geodesics intersect in the upper half plane if and only if $\frac{b}{c}>0$ (which includes the hypothesis that $c\neq 0$). For the determinant, we calculate
\begin{equation}\label{eqM1M2diff}
\det(M_1M_2-M_2M_1)=\det\left(\begin{matrix}0 & b(\frac{1}{e}-e)\\c(e-\frac{1}{e}) & 0\end{matrix}\right)=bc\left(e-\dfrac{1}{e}\right)^2,
\end{equation}
Since $e\neq\pm 1$, this is positive if and only if $bc>0$, which is equivalent to $\frac{b}{c}>0$, which is the first part.

For the second part, as $\frac{\text{Tr}(M_i)}{2}\text{Id}$ is a multiple of the identity, it commutes with all matrices. Thus we see that
\begin{equation}\label{eqnM2Z}
M_1M_2-M_2M_1=Z_1Z_2-Z_2Z_1.
\end{equation}
Since $Z_i$ has trace $0$, its adjugate is $-Z_i$, and we see that
\begin{align*}\det(Z_1Z_2-Z_2Z_1)= & \dfrac{1}{2}\Tr\left((Z_1Z_2-Z_2Z_1)\text{adj}(Z_1Z_2-Z_2Z_1)\right)\\
= & 2\det(Z_1Z_2)-\dfrac{1}{2}\Tr(Z_1Z_2\text{adj}(Z_1)\text{adj}(Z_2)+Z_2Z_1\text{adj}(Z_2)\text{adj}(Z_1))\\
= & 2\det(Z_1Z_2)-\Tr((Z_1Z_2)^2)\\
= & 4\det(Z_1Z_2)-\left(\Tr(Z_1Z_2)\right)^2,
\end{align*}
which completes the second point.

From now on, we assume that the root geodesics of $M_1,M_2$ intersect. For the sign of the intersection, as $e>1$, the first root of $M_2$ is $\infty$ and the second root is $0$, and so 
\[\sg (M_1,M_2)=+1\Leftrightarrow \sg (M_2,M_1)=-1\Leftrightarrow M_{1,f}>0.\] 
Since the root geodesics intersect, one root of $M_1$ is positive and the other is negative. Thus $M_{1,f}>0$ is equivalent to the first root of $M_1$ being greater than the second root, i.e. iff $c>0$. Since
\[\sign ((M_1M_2-M_2M_1)_{21})=\sign (c(e-\frac{1}{e}))=\sign (c),\]
the result follows for the matrices $M_1,M_2$. To complete the proof for all matrices, it suffices to show that the sign of $(M_1M_2-M_2M_1)_{21}$ is constant when we conjugate $M_1,M_2$. To do this, note that $M_1M_2-M_2M_1$ has trace $0$, so we write
\[M=M_1M_2-M_2M_1=\left(\begin{matrix}A&B\\C&-A\end{matrix}\right),\]
where $-A^2-BC>0$ as the root geodesics intersect. Let $N=\sm{E}{F}{G}{H}$ be any matrix in $\SL (2,\R)$, and then
\[((NM_1N^{-1})(NM_2N^{-1})-(NM_2N^{-1})(NM_1N^{-1}))_{21}=(NMN^{-1})_{21}=CH^2+2AGH-BG^2.\]
This is a quadratic form in $G,H$ with discriminant $4A^2+4BC<0$, so it is a definite form. Thus the values it takes on pairs $(G,H)\neq (0,0)$ all have the same sign, equal to the sign of $C=M_{21}$, as claimed. Equation \ref{eqnM2Z} completes this point.

For the last two points, we do the explicit calculation. The semi-circle $\ell_1$ has equation
\begin{equation}\label{eqsemicircle}
\left(x-\frac{a-d}{2c}\right)^2+y^2=\frac{(a+d)^2-4}{4c^2},\qquad y\geq 0,
\end{equation}
and the line $\ell_2$ has equation $x=0$. Thus the intersection point is given by $(x,y)=\left(0,\sqrt{\frac{b}{c}}\right)$. We calculate that
\begin{equation}\label{eqz1z2}
Z_1Z_2=\frac{1}{4}\left(\begin{matrix}(a-d)(e-\frac{1}{e}) & 2b(\frac{1}{e}-e)\\2c(e-\frac{1}{e}) & (a-d)(e-\frac{1}{e})\end{matrix}\right),
\end{equation}
and this has fixed points $\pm\sqrt{\frac{b}{c}}i$, as desired.

For the angle, we have that $\cot(\theta)$ is the slope of the tangent to $\ell_1$ at the intersection point. The slope of tangent to the circle $(x-A)^2+y^2=R^2$ at $(x_0,y_0)$ is $\frac{A-x_0}{y_0}$, so Equation \ref{eqsemicircle} gives us
\[\tan(\theta)=\frac{1}{\cot(\theta)}=\dfrac{\sqrt{b/c}}{(a-d)/(2c)}=\dfrac{2\sqrt{bc}}{a-d}.\]
Equation \ref{eqM1M2diff} combined with Equation \ref{eqnM2Z} and Equation \ref{eqz1z2} give that
\[\dfrac{\sqrt{\det(Z_1Z_2-Z_2Z_1)}}{\Tr(Z_1Z_2)}=\dfrac{\sqrt{bc}\left(e-\frac{1}{e}\right)}{\frac{1}{2}(a-d)\left(e-\frac{1}{e}\right)}=\frac{2\sqrt{bc}}{a-d}=\tan(\theta),\]
as desired.
\end{proof}

\section{Intersection numbers for the full modular group}\label{secpsl2z}

From now on, assume that $\Gamma=\PSL (2,\Z)$. The first order of business is translating matters into the language of binary quadratic forms.

\subsection{Binary quadratic forms}\label{secbqfs}

If $q$ is a PIBQF, consider the stabilizer of the action of $\PSL(2,\Z)$ on $q$. This is an infinite cyclic group, generated by an invariant automorph.

\begin{definition}\label{defautom}
Let $q=[A,B,C]$ be a PIBQF, and define
\[\gamma_q:=\sm{\frac{T-BU}{2}}{-CU}{AU}{\frac{T+BU}{2}},\]
where $(T,U)$ are the smallest positive integer solutions to Pell's equation
\[t^2-Du^2=4.\]
Then $\gamma_q$ generates the stabilizer of $q$ in $\PSL (2,\Z)$, and we call $\gamma_q$ the invariant automorph of $q$.
\end{definition}

\begin{definition}
Let $M=\genmtx\in\SL(2,\Z)$ be a primitive hyperbolic matrix with positive trace. The equation $Mx=x$ translates to $cx^2+(d-a)x-b=0$, so let $g=\gcd(c,d-a,b)$. The PIBQF associated to $M$ is defined to be
\[q_M:=\left[\dfrac{c}{g},\dfrac{d-a}{g},\dfrac{-b}{g}\right].\]
\end{definition}
It can be checked that the operations $q\rightarrow\gamma_q$ and $M\rightarrow q_M$ are inverse operations, whence we have the bijections
\begin{align*}
\text{PIBQFs}\leftrightarrow & \text{primitive hyperbolic matrices of $\SL(2,\Z)$ with positive trace}\\
\leftrightarrow & \text{primitive hyperbolic matrices of $\PSL(2,\Z)$}.
\end{align*}
The definition of first and second roots of quadratic forms and matrices are consistent with this bijection. Furthermore, the action of $\PSL(2,\Z)$ on PIBQFs corresponds to conjugation on primitive hyperbolic matrices as follows:
\begin{equation}\label{eqnbqfmatequivcorr}
\gamma_{Mq}=M^{-1}\gamma_qM\text{ for all $M\in\PSL(2,\Z)$.}
\end{equation}
In particular, equivalence classes of PIBQFs correspond to conjugacy classes of primitive hyperbolic matrices.

Note that when we defined invariant automorph, there were two possible choices, $\gamma_q$ and $\gamma_q^{-1}$. Taking the reciprocal of a form swaps between them, i.e. $\gamma_{-q}=\gamma_{q}^{-1}$.

\begin{definition}
Define a form to be reciprocal if it is equivalent to its reciprocal form. Note that a form is reciprocal if and only if every form equivalent to it is reciprocal, whence the definition will extend to an equivalence class of forms.
\end{definition}

\subsection{Intersection numbers of binary quadratic forms}

By considering the explicit form of $\gamma_q$, it follows that the root geodesics $\ell_q$ and $\ell_{\gamma_q}$ are identical, and therefore
\[\Int(q_1,q_2)=\Int_{\PSL(2,\Z)}^{1}(\gamma_{q_1},\gamma_{q_2}).\]
In particular, the first part of Proposition \ref{mainprop1} follows from Proposition \ref{propsimultaneouspairinterpretation}. The remainder follows from Proposition \ref{propsl2zintersect}, where we specialize Proposition \ref{propposdet} to $\Gamma=\PSL(2,\Z)$, and translate matrices to quadratic forms.

\begin{proposition}\label{propsl2zintersect}
Let $q_i=[A_i,B_i,C_i]$ ($i=1,2$) be a pair of PIBQFs of discriminants $D_1,D_2$ respectively. Then
\begin{enumerate}[label=(\roman*)]
\item The root geodesics of $q_1,q_2$ intersect uniquely in the upper half plane if and only if
\[|B_{\Delta}(q_1,q_2)|<\sqrt{D_1D_2}.\]
\item If the root geodesics intersect, let $x=B_{\Delta}(q_1,q_2)$. Then
\begin{enumerate}
\item the sign of the intersection is given by
\[\sg (q_1,q_2)=\sign (B_1A_2-B_2A_1)=\sign (B_2C_1-B_1C_2).\]
\item the point of intersection is the upper half plane root of
\[[-A_1B_2+A_2B_1,-2A_1C_2+2A_2C_1,-B_1C_2+B_2C_1],\]
which is a quadratic form of discriminant $x^2-D_1D_2$.
\item the angle of intersection $\theta$ satisfies
\[\tan(\theta)=\dfrac{\sqrt{D_1D_2-x^2}}{x}.\]
\end{enumerate}
\end{enumerate}
\end{proposition}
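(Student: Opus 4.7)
The plan is to invoke Proposition \ref{propposdet} with $M_i = \gamma_{q_i}$ and reduce everything to an explicit matrix calculation. The first step is to write down $Z_{\gamma_q}$: from Definition \ref{defautom}, $\gamma_q$ has trace $T$, so
\[
Z_{\gamma_q} \;=\; \gamma_q - \tfrac{T}{2}\,\mathrm{Id} \;=\; \tfrac{U}{2}\sm{-B}{-2C}{2A}{B}.
\]
The scalar $U/2$ is positive and will factor out of every expression in Proposition \ref{propposdet}, so I will work with the trace-zero matrix $\tilde{Z}_q := \sm{-B}{-2C}{2A}{B}$ and restore the $U_i/2$ factors only where needed.

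Next, I would directly expand $\tilde{Z}_{q_1}\tilde{Z}_{q_2}$. A short calculation shows its trace equals $2B_1B_2 - 4A_1C_2 - 4A_2C_1 = 2B_\Delta(q_1,q_2)$, and its determinant is $\det(\tilde{Z}_{q_1})\det(\tilde{Z}_{q_2}) = (-D_1)(-D_2) = D_1D_2$. Plugging into the formula $\det(Z_1Z_2-Z_2Z_1) = 4\det(Z_1Z_2) - \Tr(Z_1Z_2)^2$ from Proposition \ref{propposdet}(ii), the $U/2$ factors collect into $(U_1U_2/2)^2(D_1D_2-x^2)$, so this quantity is positive iff $x^2 < D_1D_2$. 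This gives part (i) and also reduces the angle formula to $\tan\theta = \sqrt{D_1D_2-x^2}/x$, proving (ii)(c).

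For the sign in (ii)(a), I would compute the $(2,1)$ entry of $\tilde{Z}_{q_1}\tilde{Z}_{q_2} - \tilde{Z}_{q_2}\tilde{Z}_{q_1}$ and find it equals $4(B_1A_2 - B_2A_1)$; since $U_1U_2/4 > 0$, Proposition \ref{propposdet}(iii)(a) yields $\sg(q_1,q_2) = \sign(B_1A_2-B_2A_1)$. The alternative expression $\sign(B_2C_1 - B_1C_2)$ falls out either by computing the $(1,2)$ entry and using that the trace-zero matrix has positive determinant (forcing $(1,2)$ and $(2,1)$ entries to have opposite signs), or equivalently by replacing the pair $(q_1,q_2)$ with its image under $\sm{0}{1}{-1}{0}$, which swaps $A_i \leftrightarrow C_i$ and negates $B_i$ while preserving the sign of the intersection.

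For (ii)(b), I would read off the fixed-point equation of the Möbius transformation $\tilde{Z}_{q_1}\tilde{Z}_{q_2}$: if its entries are $\sm{a}{b}{c}{d}$, the fixed points satisfy $cz^2+(d-a)z-b=0$, which corresponds to the form $[c,d-a,-b]$. Reading off the entries of $\tilde{Z}_{q_1}\tilde{Z}_{q_2}$ gives $[2(A_2B_1-A_1B_2),\, 4(A_2C_1-A_1C_2),\, 2(B_2C_1-B_1C_2)]$, which after dividing by the common factor $2$ is exactly the form in the statement. Its discriminant is $\Tr(\tilde{Z}_{q_1}\tilde{Z}_{q_2})^2 - 4\det(\tilde{Z}_{q_1}\tilde{Z}_{q_2}) = 4x^2 - 4D_1D_2$, and dividing by the $2^2$ absorbed into the common factor gives $x^2 - D_1D_2$ as claimed. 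No step here is a real obstacle; the only thing to watch is the bookkeeping of the common scalar $U_i/2$ and the factor of $2$ arising when passing from the matrix to the associated quadratic form.
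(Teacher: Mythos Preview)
Your proposal is correct and follows essentially the same route as the paper: both compute $Z_{\gamma_{q_i}}$ explicitly, extract $\Tr(Z_1Z_2)$ and $\det(Z_1Z_2)$, and feed these into Proposition~\ref{propposdet}. The only cosmetic differences are that you factor out the positive scalar $U_i/2$ at the outset (the paper carries it along), and for the second sign expression you offer the ``positive-determinant forces opposite signs in the $(1,2)$ and $(2,1)$ entries'' argument alongside the $S$-conjugation trick, whereas the paper uses only the latter.
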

\begin{proof}
Let $T_i^2-D_iU_i^2=4$ be the smallest solution to Pell's equation ($i=1,2$), and then
\[Z_{i}:=Z_{\gamma_{q_i}}=\dfrac{U_i}{2}\left(\begin{matrix}-B_i&-2C_i\\2A_i&B_i \end{matrix}\right).\]
The determinant is $\det(Z_i)=\frac{U_i^2}{4}(-B_i^2+4A_iC_i)=-\frac{U_i^2D_i}{4}$, and
\[Z_1Z_2=\dfrac{U_1U_2}{4}\left(\begin{matrix}B_1B_2-4A_2C_1&2B_1C_2-2B_2C_1\\-2A_1B_2+2A_2B_1&B_1B_2-4A_1C_2\end{matrix}\right).\]
Thus 
\[\Tr(Z_1Z_2)=\dfrac{U_1U_2}{2}(B_1B_2-2A_1C_2-2A_2C_1)=\dfrac{U_1U_2}{2}B_{\Delta}(q_1,q_2).\]
 Proposition \ref{propposdet} says that the root geodesics intersect if and only if $(\Tr(Z_1Z_2))^2<4\det(Z_1Z_2)$, which translates to $|B_{\Delta}(q_1,q_2)|<\sqrt{D_1D_2}$.

For the sign of the intersection, Proposition \ref{propposdet} gives
\[\sg (q_1,q_2)=\sign (Z_1Z_2-Z_2Z_1)_{21}=\sign\left((U_1U_2)(B_1A_2-A_1B_2)\right)=\sign (B_1A_2-B_2A_1).\]
The equality with $\sign (B_2C_1-B_1C_2)$ comes from applying $S$ to $q_1,q_2$; the sign of the intersection remains the same, and we start with PIBQFs $[C_i,-B_i,A_i]$ instead.

The intersection point and angle come directly from plugging in these calculations into Proposition \ref{propposdet}.
\end{proof}

\begin{remark}\label{remalternatesign}
If the root geodesics of $q_1,q_2$ intersect, then an alternate interpretation of the sign of the intersection is the sign of $q_1(q_{2,f},1)$.
\end{remark}

\begin{remark}
The discriminant of the quadratic form $q_1x+q_2y$ is
\[D_1x^2+2B_{\Delta}(q_1,q_2)xy+D_2y^2,\]
so $B_{\Delta}$ appears as the ``cross term'' of this expression. The notation $B_{\Delta}$ comes from Gross-Zagier in \cite{GZ85}.
\end{remark}

\begin{remark}
The root geodesics intersect if and only if the cross-ratio $(q_{1,f},q_{1,s};q_{2,f},q_{2,s})$ is negative. A messy computation shows that this cross-ratio is equal to
\[\dfrac{B_{\Delta}(q_1,q_2)-\sqrt{D_1D_2}}{B_{\Delta}(q_1,q_2)+\sqrt{D_1D_2}},\]
which provides an alternative proof of Proposition \ref{propsl2zintersect}i.
\end{remark}

\subsection{Intersection numbers as linking numbers}\label{subseclink}
\begin{sloppypar}
The proof that intersection numbers of modular geodesics coincide with linking numbers in $\SL(2,\Z)\backslash\SL(2,\R)$ is originally due to Birkhoff in \cite{BK17}. Here, we follow \cite{DIT17}, and make this connection concrete.
\end{sloppypar}

Let $\gamma\in\SL(2,\Z)$ be a primitive hyperbolic matrix with positive trace, and take $M_{\gamma}$ to be a diagonalization matrix, so that
\[\gamma M_{\gamma}=M_{\gamma}\left(\begin{matrix}\epsilon & 0\\0 &\frac{1}{\epsilon}\end{matrix}\right),\]
where $\epsilon>1$ is the larger eigenvalue of $\gamma$. Let $\phi(t):=\sm{e^t}{0}{0}{e^{-t}}$, and then
\[\tilde{\gamma}_{+}(t):=M_{\gamma}\phi(t)\quad\text{and}\quad \tilde{\gamma}_{-}(t):=M_{\gamma}S\phi(t)\quad\text{for $0\leq t\leq\log(\epsilon)$}\]
define closed paths in the space $\SL(2,\Z)\backslash\SL(2,\R)$. The knot $[\tilde{\gamma}_{+}]$ is null-homologous if and only if $\gamma$ is reciprocal, i.e. it is conjugate to $\gamma^{-1}$. Thus we consider the link
\[[\tilde{\gamma}]:=[\tilde{\gamma}_{+}]+[\tilde{\gamma}_{-}],\]
which is null-homologous in $\SL(2,\Z)\backslash\SL(2,\R)$. Furthermore, this link remains constant over a $\SL(2,\Z)$ conjugacy class, as well as replacing $\gamma$ by $\gamma^{-1}$.

Given a pair of strongly inequivalent conjugacy classes of primitive hyperbolic matrices, say $\mathcal{C}_{\sigma},\mathcal{C}_{\gamma}$, their linking number is the linking number of the null-homologous links associated to the classes. Denote this by $\Lk(\mathcal{C}_{\sigma},\mathcal{C}_{\gamma})$.

\begin{theorem}
We have
\[\Lk(\mathcal{C}_{\sigma},\mathcal{C}_{\gamma})=-\Int_{\PSL(2,\Z)}^1(\sigma,\gamma),\]
and the linking number is always even. Furthermore, if $\sigma$ or $\gamma$ is reciprocal, then the linking number is a multiple of $4$.
\end{theorem}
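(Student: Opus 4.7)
The plan is to reduce the statement to the classical correspondence between linking numbers and intersection numbers of closed geodesics in the unit tangent bundle of a hyperbolic surface, due to Birkhoff and made explicit in the modular setting in \cite{DIT17}. First, I would verify that $\tilde{\gamma}_+$ and $\tilde{\gamma}_-$ are precisely the canonical lifts to $\SL(2,\Z)\backslash\SL(2,\R)$ of the oriented closed geodesics $\tilde{\ell}_{\gamma}$ and $\tilde{\ell}_{\gamma^{-1}}$ respectively. Evaluating $M_{\gamma}\phi(t)\cdot i=M_{\gamma}(e^{2t}i)$ shows that $\tilde{\gamma}_+$ projects to the geodesic flow from $\gamma_s$ to $\gamma_f$, while $M_{\gamma}S\phi(t)\cdot i=M_{\gamma}(e^{-2t}i)$ (using $S\cdot i=i$ and $S\phi(t)=\phi(-t)S$) shows that $\tilde{\gamma}_-$ is the canonical lift of the same unoriented geodesic with the reverse orientation. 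Thus $[\tilde{\gamma}]$ is exactly the union of the two oriented canonical lifts of $\tilde{\ell}_{\gamma}$.

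With this identification, I would invoke Birkhoff's theorem to conclude $\Lk([\tilde{\sigma}],[\tilde{\gamma}])=-\Int^1_{\PSL(2,\Z)}(\sigma,\gamma)$: after summing over the four canonical lifts and using the sign convention of Definition \ref{defintsign}, each transverse intersection of $\tilde{\ell}_{\sigma}$ with $\tilde{\ell}_{\gamma}$ on the modular curve yields a net contribution of $-1$ to the signed crossing count defining the linking number in the 3-manifold.

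For the parity and divisibility claims, I would exploit the right-multiplication map $R:M\mapsto MS$ on $\SL(2,\Z)\backslash\SL(2,\R)$. Since $\SL(2,\R)$ is connected, $R$ is an orientation-preserving diffeomorphism, and the computation $R(\tilde{\gamma}_+(t))=M_{\gamma}\phi(t)S=M_{\gamma}S\phi(-t)$ shows that $R$ interchanges $\tilde{\gamma}_+$ and $\tilde{\gamma}_-$ while reversing orientation, and likewise for $\sigma$. Invariance of $\Lk$ under $R$, together with the two orientation reversals, yields
\[\Lk(\tilde{\sigma}_+,\tilde{\gamma}_+)=\Lk(\tilde{\sigma}_-,\tilde{\gamma}_-)\quad\text{and}\quad\Lk(\tilde{\sigma}_+,\tilde{\gamma}_-)=\Lk(\tilde{\sigma}_-,\tilde{\gamma}_+).\]
Expanding by bilinearity,
\[\Lk([\tilde{\sigma}],[\tilde{\gamma}])=\Lk(\tilde{\sigma}_+,\tilde{\gamma}_+)+\Lk(\tilde{\sigma}_+,\tilde{\gamma}_-)+\Lk(\tilde{\sigma}_-,\tilde{\gamma}_+)+\Lk(\tilde{\sigma}_-,\tilde{\gamma}_-)\]
is therefore twice an integer, which is the parity claim. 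If $\sigma$ is reciprocal, then $\sigma$ is $\SL(2,\Z)$-conjugate to $\sigma^{-1}$, and the corresponding conjugating element identifies $\tilde{\sigma}_+$ with $\tilde{\sigma}_-$ as oriented knots in $\SL(2,\Z)\backslash\SL(2,\R)$. Consequently $\Lk(\tilde{\sigma}_+,X)=\Lk(\tilde{\sigma}_-,X)$ for either of the two $X\in\{\tilde{\gamma}_+,\tilde{\gamma}_-\}$, forcing all four terms in the expansion to be equal and giving divisibility by $4$; the case of $\gamma$ reciprocal is symmetric.

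The main obstacle will be pinning down the sign in the Birkhoff correspondence so that it matches Definition \ref{defintsign}. In principle this is a direct citation of \cite{BK17,DIT17}, but the sign conventions are subtle; alternatively one can localize near a single transverse intersection, use $\SL(2,\R)$-invariance (as in the normalization in the proof of Proposition \ref{propposdet}) to bring the two canonical lifts to a model position, and then read off the crossing sign from the orientation of the fiber of $T^1Y\to Y$ at the intersection point, which is the computation that produces the $-1$ per intersection.
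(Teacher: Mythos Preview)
Your derivation of $\Lk(\mathcal{C}_\sigma,\mathcal{C}_\gamma)=-\Int^1_{\PSL(2,\Z)}(\sigma,\gamma)$ is essentially what the paper does: both invoke Theorem~6.4 of \cite{DIT17} and match the resulting count with Proposition~\ref{propfourthintersection}. Your description of $\tilde\gamma_\pm$ as the two canonical unit-tangent lifts of $\tilde\ell_\gamma$ is a helpful elaboration, and your caveat about the sign is honest.

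For the parity and divisibility-by-$4$ statements your route is genuinely different from the paper's. The paper postpones these to Corollary~\ref{corintiseven}, which is a purely combinatorial fact about rivers in the Conway topograph (via Proposition~\ref{propintsign0}): the intersections decompose into four types $\Int^{RS},\Int^{RO},\Int^{LS},\Int^{LO}$, the $R$- and $L$-types pair up, and in the reciprocal case the $S$- and $O$-types pair up as well. Your argument instead exploits the orientation-preserving involution $R\colon M\mapsto MS$ of $\SL(2,\Z)\backslash\SL(2,\R)$, which swaps $\tilde\gamma_+\leftrightarrow\tilde\gamma_-$ (and $\tilde\sigma_+\leftrightarrow\tilde\sigma_-$) with an orientation reversal on each. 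This is more direct and avoids the topograph machinery entirely; on the other hand, the paper's combinatorial proof yields the finer decomposition $\Int=2(\Int^{RS}+\Int^{RO})$, which it uses elsewhere.

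There is one genuine technical gap in your write-up. You expand $\Lk([\tilde\sigma],[\tilde\gamma])$ bilinearly into the four terms $\Lk(\tilde\sigma_\pm,\tilde\gamma_\pm)$, but these are not well defined in general: the ambient $3$-manifold is the trefoil complement, with $H_1\cong\Z$, and the paper notes that $\tilde\gamma_+$ is null-homologous \emph{only} when $\gamma$ is reciprocal. So $\Lk(\tilde\sigma_+,\tilde\gamma_+)$ is not an integer (or even a rational number) in the generic case, and the four-term expansion does not make sense. The fix is straightforward: expand only in the first slot, where $[\tilde\gamma]$ is null-homologous, to get $\Lk([\tilde\sigma],[\tilde\gamma])=\Lk(\tilde\sigma_+,[\tilde\gamma])+\Lk(\tilde\sigma_-,[\tilde\gamma])$; your $R$-symmetry (noting $R_*[\tilde\gamma]=-[\tilde\gamma]$ and $R_*\tilde\sigma_+=-\tilde\sigma_-$) then gives equality of the two summands, hence evenness. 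For divisibility by $4$ when $\sigma$ is reciprocal, use that $\tilde\sigma_+=\tilde\sigma_-$ is now itself null-homologous, so one may legitimately swap slots and expand $\Lk(\tilde\sigma_+,[\tilde\gamma])=\Lk(\tilde\gamma_+,\tilde\sigma_+)+\Lk(\tilde\gamma_-,\tilde\sigma_+)$; the same $R$-symmetry then equates these two terms.
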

\begin{proof}
Following \cite{DIT17}, let $\mathcal{C}$ be a conjugacy class of primitive hyperbolic matrices, and take any $\sigma\in\mathcal{C}$. Let $\Gamma=\SL (2,\Z)$ and let $\Gamma_{\sigma}=\{g\in\Gamma:g\sigma g^{-1}=\sigma\}=\pm\sigma^{\Z}$. For $z_1,z_2\in\overline{\mathbb{H}}$, define
\[I_{\mathcal{C}}(z_1,z_2):=\{\alpha\in\Gamma\backslash\Gamma_{\sigma}:\alpha \ell_{\sigma}\text{ intersects }\dot{\ell}_{z_1,z_2}\},\]
and note that its size does not depend on the choice of $\sigma\in\mathcal{C}$. Theorem $6.4$ of \cite{DIT17} shows that taking $z_0=M_{\gamma}i\in\ell_{\gamma}$ gives
\[\Lk(\mathcal{C}_{\sigma},\mathcal{C}_{\gamma})=-|I_{\mathcal{C}_{\sigma}}(z_0,\gamma z_0)|.\]
For $\alpha\in I_{\mathcal{C}}(z_0,\gamma z_0)$, the root geodesic $\alpha\ell_{\sigma}=\ell_{\alpha\sigma\alpha^{-1}}$ intersects $\dot{\ell}_{z_0,\gamma z_0}$, and $\alpha\sigma\alpha^{-1}$ is well defined and distinct $\alpha$'s give distinct conjugates (since $\alpha\in \Gamma\backslash\Gamma_{\sigma}$).

Proposition \ref{propfourthintersection} showed that
\[\Int_{\PSL(2,\Z)}^{1}(\sigma,\gamma)=\sum_{\substack{\beta\in\Gamma\text{ conjugate to }\sigma\\ |\ell_{\beta}\pitchfork\dot{\ell}_{z,\gamma z}|=1}}1,\]
for $z\in\ell_{\gamma}$ not a fixed point of $\gamma$. The first result follows by taking $z=z_0$. The rest will follow from Corollary \ref{corintiseven}.
\end{proof}

\subsection{Gross-Zagier analogue}

In \cite{GZ85}, Gross and Zagier define the function
\[p_{GZ}(n)=\frac{1}{2}\left|\left\{(q_1,q_2):\disc(q_1)=D_1, \disc(q_2)=D_2, B_{\Delta}(q_1,q_2)=-n\right\}\diagup\sim_2\right|,\]
where $D_1,D_2$ are negative discriminants, $n>\sqrt{D_1D_2}$, and $n\equiv D_1D_2\pmod{2}$. Proposition $6.1$ of \cite{GZ85} says that if $D_1,D_2$ are coprime and fundamental, then
\[p_{GZ}(n)=\sum_{d\mid\frac{n^2-D_1D_2}{4}}\epsilon(d).\]
Recall that for positive discriminants $D_1,D_2$ we have defined the function $p(n)$ as
\[p(n):=\left|\left\{(q_1,q_2):\disc(q_1)=D_1, \disc(q_2)=D_2, B_{\Delta}(q_1,q_2)=n\right\}\diagup\sim_2\right|,\]
and in Theorem \ref{mainthm1} claimed that
\[p(n)=2\sum_{d\mid\frac{D_1D_2-n^2}{4}}\epsilon(d),\]
when $D_1, D_2$ are also coprime and fundamental. In particular, it is clear that $p(n)$ is the real quadratic analogue of $p_{GZ}(n)$, and that Theorem \ref{mainthm1} is the real quadratic analogue of Proposition $6.1$ of \cite{GZ85}. While it may be possible to adopt their proof of Proposition $6.1$ to this situation, there are complications with the exact sequence involving class groups and unit groups found on page $213$ of \cite{GZ85}. The sequence and proof do not translate exactly to our case; for example, the unit group of $L$ now has rank $3$ instead of $1$.

As a result, we will sketch a different proof of the theorem (see Sections 4-6 of \cite{JR21shim} for full details).
\begin{proof}[Sketch of the proof of Theorem \ref{mainthm1}]
Let $q=[A,B,C]$ be a PIBQF of discriminant $D$, and let $\mathcal{O}_D$ be the unique quadratic order of discriminant $D$. We get a corresponding embedding $\phi_q:\mathcal{O}_D\rightarrow M_2(\Z)$ induced from
\[\phi_q(\sqrt{D})=\left(\begin{matrix} -B & -2C\\2A & B\end{matrix}\right).\]
Now,
\begin{enumerate}
\item $B_{\Delta}(q_1,q_2)=n$ is equivalent to $\frac{1}{2}\Tr\left(\phi_{q_1}(\sqrt{D_1})\phi_{q_2}(\sqrt{D_2})\right)=n$.
\item By completing at all places and using the local-global principle, it follows that $p(n)=0$ if and only if $r>0$ (coprimality is used for the only if, and fundamentalness guarantees the resulting embeddings correspond to primitive forms of discriminants $D_1,D_2$).
\item If $r=0$, fix a pair $(q_1,q_2)$ of PIBQFs of discriminants $D_1,D_2$ respectively with $B_{\Delta}(q_1,q_2)=n$.
\item If $(q_1',q_2')$ is another such pair, show that the pairs of embeddings $(\phi_{q_1},\phi_{q_2})$ and $(\phi_{q_1'},\phi_{q_2'})$ are related by a simultaneous conjugation in $\SL(2,\Q)$.
\item Let $\mathbb{O}$ be the smallest order of $M_2(\Q )$ which contains $\phi_{q_1}(\mathcal{O}_{D_1})$ and $\phi_{q_2}(\mathcal{O}_{D_2})$.
\item By explicitly demonstrating a basis for $\mathbb{O}$, calculate that its discriminant is $\frac{D_1D_2-n^2}{4}$.
\item For every maximal order containing $\mathbb{O}$, we can conjugate it to make $M_2(\Z)$. Since the stabilizer (under conjugation) of $M_2(\Z)$ in $M_2(\Q)$ is $\GL(2,\Z)$, we obtain two $\SL(2,\Z)$ equivalence classes of pairs of embeddings.
\item Each equivalence class of pairs of embeddings corresponds to a unique pair of PIBQFs counted in $p(n)$. The fourth point implies that all pairs in $p(n)$ are counted, hence $\frac{p(n)}{2}$ bijects with the set of maximal orders containing $\mathbb{O}$.
\item The number of maximal orders containing $\mathbb{O}$ can be calculated locally. At the primes $q_i$, the completed order $\mathbb{O}_{q_i}$ is not contained in any Eichler order of $M_2(\Q _{q_i})$, whence it is contained in exactly one maximal order.
\item The order $\mathbb{O}_{w_i}$ is an Eichler order of level $w_i^{g_i}$ in $M_2(\Q_{w_i})$, and is thus contained in $g_i+1$ maximal orders.
\item Thus $p(n)=2\prod_{i=1}^t (g_i+1)$ if $r=0$ and $p(n)=0$ if $r>0$.
\item Let $f(k)=\sum_{d\mid k}\epsilon(d)$ for all $k$ for which $\epsilon(k)$ is defined. Note that $f$ is multiplicative, and that
\[f(p_i^{2e_i+1})=0,\,\,\,\,f(q_i^{2f_i})=1,\,\,\,\,f(w_i^{g_i})=g_i+1,\]
and hence $p(n)=2f\left(\frac{D_1D_2-n^2}{4}\right)$, as desired.
\end{enumerate}

The equality $f\left(\frac{D_1D_2-n^2}{4}\right)=r_{L/K}\left(\frac{\sqrt{D_1D_2}-n}{2}\right)$ follows either from similar arguments to Gross-Zagier, or combining point 11 with a case analysis of how primes dividing $\langle\frac{\sqrt{D_1D_2}-n}{2}\rangle$ split in $L$.
\end{proof}

In Corollary \ref{corintersectionbounds}, we will prove that the intersection number is always non-zero. Combining this with the above theorem gives an interesting little result.

\begin{corollary}
Let $D_1,D_2$ be positive coprime fundamental discriminants. Then there exists a non-negative integer $n$ such that
\begin{itemize}
\item $n<\sqrt{D_1D_2}$ and $n\equiv D_1D_2\pmod{2}$;
\item If $p$ is a prime for which $\epsilon(p)=-1$, then $v_p\left(\frac{D_1D_2-n^2}{4}\right)$ is even.
\end{itemize}
\end{corollary}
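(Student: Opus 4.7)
The plan is to extract this existence statement essentially for free by combining the three headline results established earlier. First, I would invoke Corollary \ref{corintersectionbounds} (forward-referenced in the sketch), which asserts $\Int(D_1,D_2) > 0$ for any positive coprime fundamental pair. Next, Corollary \ref{maincor1} expresses this total as
\[
\Int(D_1,D_2) \;=\; \sum_{n \in S_{D_1,D_2}} p(n),
\]
a sum of non-negative integers. Since the left-hand side is strictly positive, at least one index $n_0 \in S_{D_1,D_2}$ must satisfy $p(n_0) > 0$.

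The second step is to translate $p(n_0) > 0$ into the claimed divisibility condition. Theorem \ref{mainthm1} states that, under our coprimality and fundamentality hypotheses, $p(n) = 0$ if and only if the integer $r$ in the factorization of $\frac{D_1D_2 - n^2}{4}$ is positive, i.e.\ iff some prime $p$ with $\epsilon(p) = -1$ appears to an odd power. Thus positivity of $p(n_0)$ is equivalent to saying that every prime $p$ with $\epsilon(p) = -1$ divides $\frac{D_1D_2 - n_0^2}{4}$ to an even power, which is precisely the second bulleted condition.

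Finally, I would pass from $n_0$ to $|n_0|$ to meet the non-negativity requirement. Both $n^2$ and the congruence class $n \bmod 2$ are invariant under $n \mapsto -n$, and the inequality $|n| < \sqrt{D_1 D_2}$ is already built into membership in $S_{D_1,D_2}$, so $n := |n_0|$ inherits all the required properties. There is no real obstacle in this corollary itself: the entire content is hidden inside Corollary \ref{corintersectionbounds} (which establishes that closed modular geodesics of distinct fundamental discriminants must cross) and Theorem \ref{mainthm1} (whose proof, via the local analysis of orders in $M_2(\mathbb{Q})$, supplies the criterion $r = 0$). The corollary is just the logical conjunction of these two facts, translated back from geometry to elementary number theory.
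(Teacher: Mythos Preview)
Your proposal is correct and follows exactly the approach indicated by the paper, which simply says that combining Corollary~\ref{corintersectionbounds} (the intersection number is always nonzero) with Theorem~\ref{mainthm1} yields the result. The only minor imprecision is that Corollary~\ref{corintersectionbounds} literally bounds $\Int(q_1,q_2)$ for individual forms rather than $\Int(D_1,D_2)$, but the latter is a sum of the former over class group representatives, so positivity follows immediately; your handling of the passage from $n_0$ to $|n_0|$ is also a correct detail that the paper leaves implicit.
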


\section{Computing intersection numbers in terms of the Conway topograph}\label{secconway}
The Conway topograph is a device used to understand the equivalence class of a binary quadratic form. For an alternate presentation of the Conway topograph (as well as another interpretation of the river of an indefinite form), see \cite{SV18}.

\subsection{The action of PSL(2,Z) on an infinite 3-regular tree}
Let $G$ be the infinite $3-$regular connected tree drawn in the plane. The Conway topograph will consist of $G$ and some additional data; we first study $G$. Let $E(G)_{\text{or}}$ be the set of pairs $(E,V)$ where $E$ is an edge of $G$, and $V$ is one of the two vertices on $E$ (i.e. an oriented edge). We will define an action of $\PSL(2,\Z)$ on $E(G)_{\text{or}}$.

Recall that $\PSL(2,\Z)$ is generated by the matrices $S=\sm{0}{1}{-1}{0}$ and $T=\sm{1}{1}{0}{1}$, which gives the group presentation
\[\PSL (2,\Z)=\langle S,T| S^2=(ST)^3=1\rangle.\]
If $E$ has vertices $V_1,V_2$, define the action of $S$ on $(E,V_1)$ to be $S\circ(E,V_1)=(E,V_2)$, or equivalently you swap the edge orientation. To act via $T$, move along $E$ to $V_1$, and take the left branch with the same orientation (i.e. the vertex that is not $V_1$). With reference to Figure \ref{figactionofT}, the action of $T$ is
\[T\circ (E,V_1)=(F,V_3),\qquad T\circ (F,V_1)=(G,V_4),\qquad T\circ (G,V_1)=(E,V_2).\]

\begin{figure}[ht]
	\centering
		\includegraphics[scale=0.6]{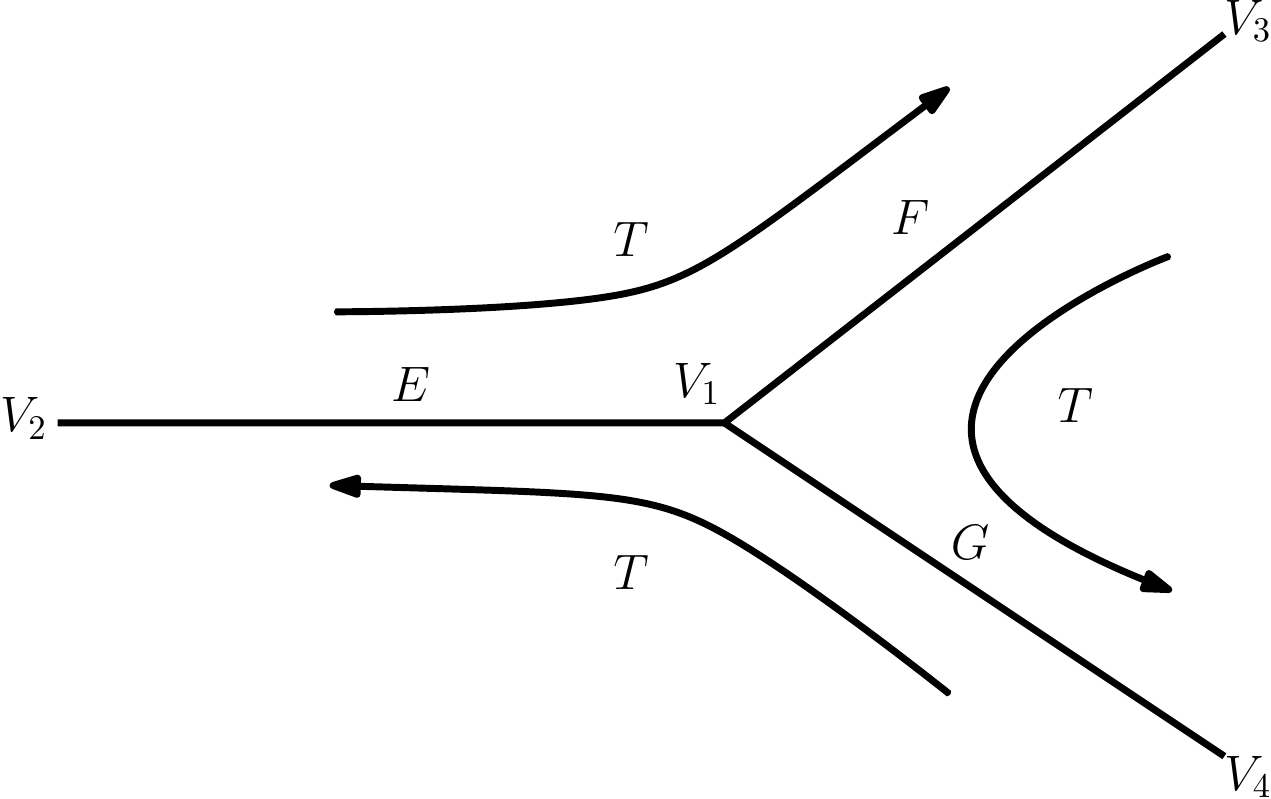}
	\caption{Action of $T$.}\label{figactionofT}
\end{figure}

From this it is easy to see that the relations $S^2=(ST)^3=1$ are satisfied, i.e. our action does descend down to an action of $\PSL(2,\Z)$. Furthermore, it is clear that the action is transitive, and the stabilizer of $(E,V)$ is trivial. Thus we can form a (non-canonical) bijection between $E(G)_{\text{or}}$ and $\PSL (2,\Z)$, by picking a base element of $E(G)_{\text{or}}$.

Note that an alternate interpretation of $E(G)_{\text{or}}$ is as the set of ordered triples $(R_1,E,R_2)$, where $R_1,R_2$ are distinct regions of the plane formed by $G$ that are separated by the edge $E$.

\subsection{Definition of the topograph}
A completed topograph will consist of the graph $G$, with numbers in all the regions formed by $G$, numbers on all of the edges, and arrows on certain edges. To read off a BQF, pick any region $R_1$ and edge $E$ bordering the region, and let $R_2$ be the region on the other side of $E$. Orient so that $E$ is horizontal, with $R_1$ above $E$ and $R_2$ below it. If $r_i$ and $e$ represent the numbers on the regions and edge, then we form the BQF $[r_1,e,r_2]$ if the arrow on $E$ is pointing right, and $[r_1,-e,r_2]$ if the arrow is pointing left. There will be no arrow if and only if $e=0$, and then you form $[r_1,0,r_2]$. The BQFs read off in this fashion will form an entire equivalence class of BQFs. Figure \ref{figreadBQF} is an example of how to read off BQFs.
\begin{figure}[ht]
	\centering
		\includegraphics[scale=1]{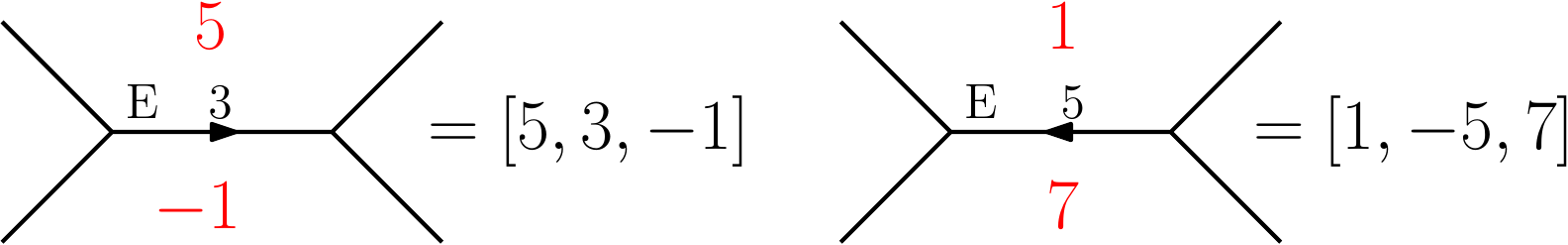}
	\caption{Reading BQFs from the topograph; region $R_1$ is above $E$, and region $R_2$ is below $E$.}\label{figreadBQF}
\end{figure}

To create the topograph, start with a BQF $q=[A,B,C]$, and pick any pair $(E,V)\in E(G)_{\text{or}}$. For $M\in\PSL (2,\Z)$, let $M\circ (E,V)=(E',V')$. When $E'$ is horizontal with $V'$ on the right, let $R_1$ be the region above $E'$ and $R_2$ be the region below $E'$. If $M\circ [A,B,C]=[A',B',C']$, we write the number $A'$ in $R_1$, and $|B'|$ on $E'$. If $B'>0$, draw the arrow so that when $E'$ is horizontal with $R_1$ above it, then the arrow points right. If $B'<0$ draw the opposite arrow, and if $B'=0$ draw no arrow.

First, we claim that this is well defined. Consider the equations
\begin{equation}\label{eqnactionofTS}
T\circ[A,B,C]=[A,B+2A,A+B+C],\qquad S\circ[A,B,C]=[C,-B,A].
\end{equation}
If $M,M'$ correspond to the same region $R_1$, then we necessarily have $M'=MT^k$ for some integer $k$, and Equation \ref{eqnactionofTS} implies that they define the same number. If $M,M'$ correspond to the same edge, we either have $M'=M$ or $M'=MS$, and Equation \ref{eqnactionofTS} again implies that the definition of $|B|$ and the arrow was consistent.

For an alternate interpretation of the arrow, note that each edge $E$ touches four regions, two along the length of the edge, and two its vertices. The arrow on $E$ points from the region touching a vertex with a smaller number to the region touching a vertex with a larger number. These regions have the same number if and only if the number on the edge is $0$, i.e. no arrow was drawn.

As examples, Figures \ref{fig102top} and \ref{fig12m2top} are parts of the topographs for the forms $[1,0,-2]$ and $[1,2,-2]$. The numbers in the regions are coloured red, and the numbers on the edges are black.

\begin{figure}[!ht]
\centering
\begin{minipage}{.5\textwidth}
	\centering
  \includegraphics[width=.8\linewidth]{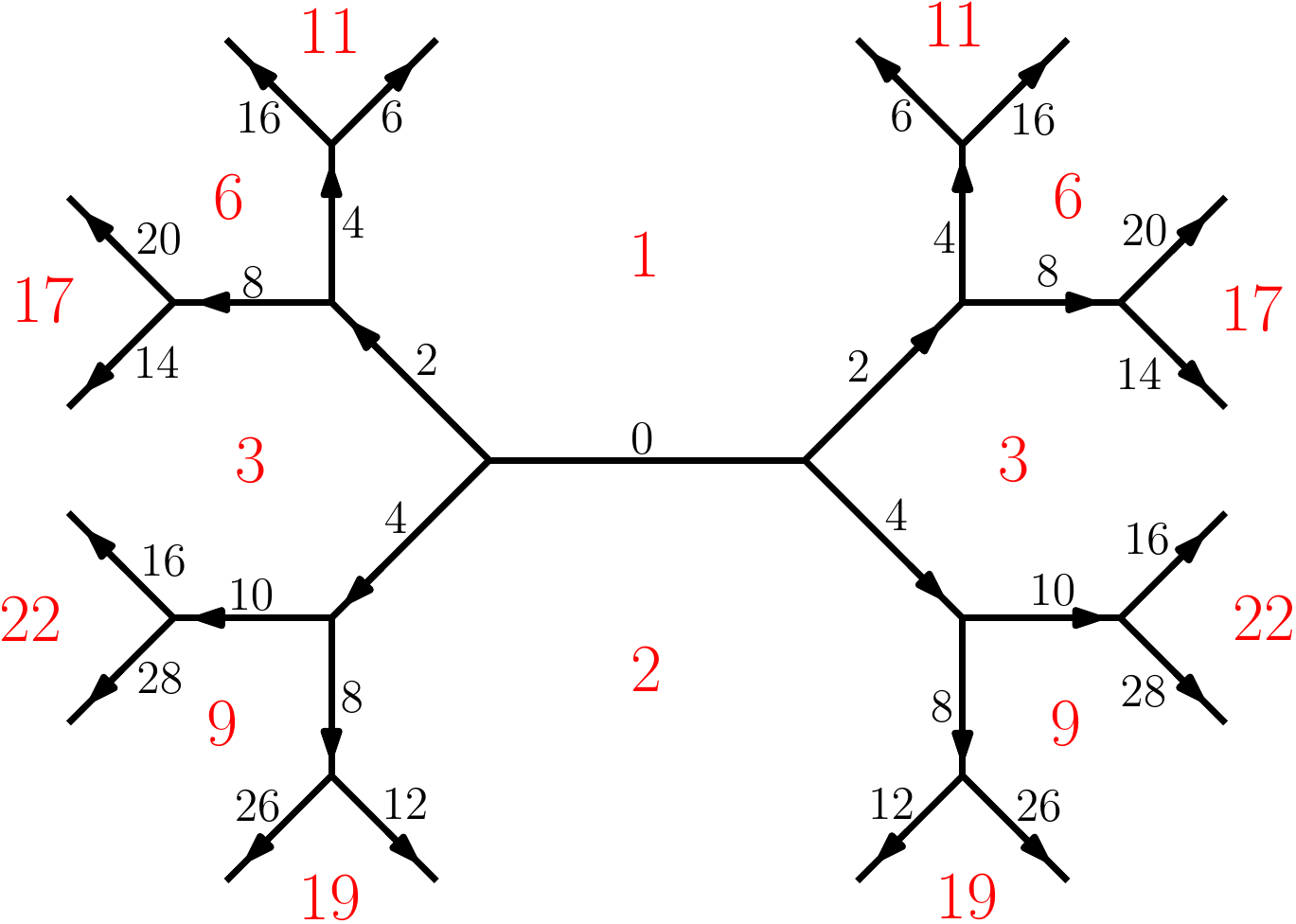}
  \captionof{figure}{$[1,0,2]$ topograph.}\label{fig102top}
\end{minipage}%
\begin{minipage}{.5\textwidth}
  \centering
  \includegraphics[width=\linewidth]{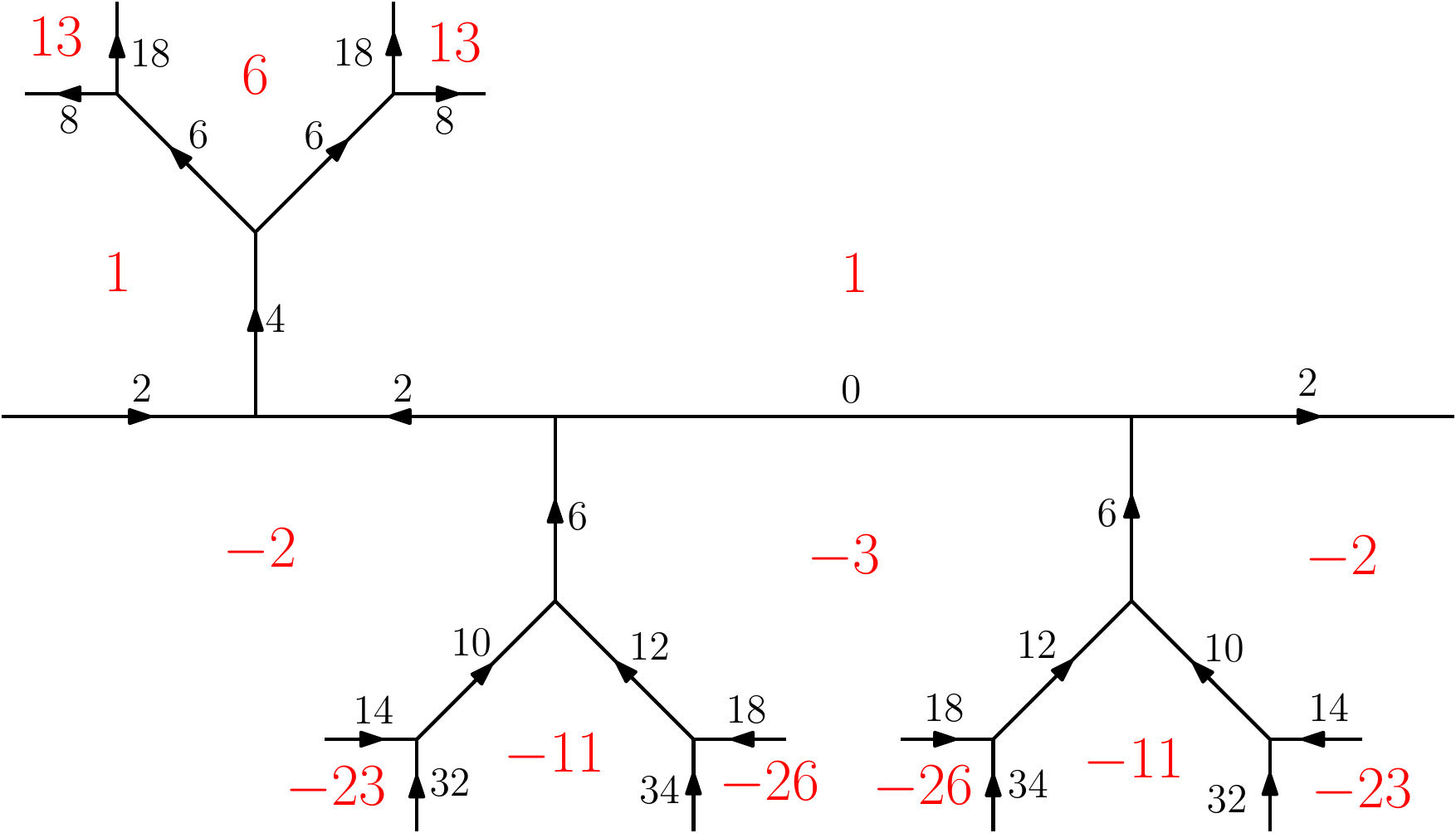}
  \captionof{figure}{$[1,2,-2]$ topograph.}\label{fig12m2top}
\end{minipage}
\end{figure}

\subsection{Key properties of the topograph}

From the construction, it is immediate that the BQFs read off from the topograph created from $[A,B,C]$ form the equivalence class for forms similar to $[A,B,C]$. Furthermore, any two forms in this equivalence class produce isomorphic topographs. However, one must be careful, as a form $[A,B,C]$ does not necessarily correspond to a \textit{unique} pair $(E,V)$ on its topograph. Indeed, it appears uniquely if and only if $[A,B,C]$ has trivial automorph. Assuming the form is primitive, this happens if and only if $D<-4$.

The numbers which appear in regions are precisely the numbers which can be represented properly by the BQFs in the equivalence class. This fact, coupled with the following lemma, allows us to determine if a number is properly represented by a given BQF.

\begin{lemma}[Climbing Lemma]
Let $q=[A,B,C]$ be a BQF with $A,B,C>0$. In the topgraph with $q$ present, numbers beyond $q$ (in the direction of the arrow on the edge corresponding to $q$) are strictly increasing.
\end{lemma}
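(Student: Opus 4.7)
The plan is to induct on the depth of a region past $E$ in the forward direction, first handling a one-step climbing claim and then iterating it.

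For the base step, I would compute the values of the two ``third'' regions at the endpoints of $E$ using the $\PSL(2,\Z)$-action. Equation \ref{eqnactionofTS} gives $T \circ [A,B,C] = [A, B+2A, A+B+C]$, identifying the third region at one endpoint of $E$ as $A+B+C$; applying $T^{-1}$ similarly identifies the third region at the other endpoint as $A-B+C$. Since $B > 0$, the former is strictly larger, so by the arrow convention (the arrow points from the vertex with the smaller third region to the one with the larger) the arrow on $E$ points to the endpoint where the new region has value $A+B+C$. Because $A, B, C > 0$, this value strictly exceeds both $A$ and $C$, establishing the first climb.

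For the inductive step, I would identify the two edges $E_1, E_2$ emanating from the head vertex of $E$. The edge $E_1$ (separating $R_1$ of value $A$ from the new region $R_3$ of value $A+B+C$) has associated form $[A, B+2A, A+B+C]$, as read off from the action of $T$; by a symmetric computation using a conjugate of $T$ (reflecting across the line containing $E$), the edge $E_2$ has associated form $[C, B+2C, A+B+C]$. Both of these have all positive coefficients, and since $B+2A > 0$ and $B+2C > 0$, the arrows on both $E_1$ and $E_2$ point in the forward direction (away from $E$). The inductive hypothesis then applies to each of them, and stitching the two subtrees together covers everything past $E$.

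The hard part will be the bookkeeping in the inductive step: correctly identifying the new forms on $E_1, E_2$ and verifying their arrow directions requires careful tracking of the $\PSL(2,\Z)$-action on oriented edges via the generators $S, T$. Once this is in hand, the conclusion is immediate, because along any forward-moving path past $E$ the next region's value is the sum of the three strictly positive coefficients of the current form, which strictly exceeds the values of the two prior adjacent regions.
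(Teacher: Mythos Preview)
Your proposal is correct and follows essentially the same approach as the paper: compute that the new region has value $A+B+C$ and the two outgoing edges carry (in absolute value) $B+2A$ and $B+2C$, observe that all of these are positive, and iterate. The paper compresses this into a single sentence, whereas you have spelled out the arrow-direction verification and the inductive bookkeeping; one minor point is that the forward-right edge form is more naturally written as $[A+B+C,\,B+2C,\,C]$ (via $R\circ[A,B,C]$) rather than $[C,\,B+2C,\,A+B+C]$, but this does not affect the argument.
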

\begin{proof}
In the region beyond we fill in $A+B+C$, and on the two adjacent edges we fill in $B+2A,B+2C$, so the numbers in the regions and the edges grow (and they remain positive, so the same applies again).
\end{proof}

Given two forms on a topograph $q_1,q_2$, we can easily find the transition matrix to go between them. Indeed, let $M=\text{Id}$, let
\begin{equation}\label{eqnRLdef}
L=T=\left(\begin{matrix}1&1\\0&1\end{matrix}\right),\qquad R=\left(\begin{matrix}1&0\\1&1\end{matrix}\right),
\end{equation}
and start at the oriented edge corresponding to $q_1$. Take the path to the oriented edge corresponding to $q_2$, where going forward and left corresponds to multiplying $M$ by $L$ on the right, forward and right corresponds to multiplying $M$ by $R$ on the right, and reversing direction corresponds to multiplying $M$ by $S$ on the right. 

From the $[1,0,2]$ topograph found in Figure \ref{fig102top}, take $q_1=[2,4,3]$ and $q_2=[17,14,3]$, and we find that
\[M=SRLRR=\left(\begin{matrix}5&2\\-3&-1\end{matrix}\right).\]
It can be checked that indeed, $M\circ q_1=q_2$.

\subsection{The topograph of indefinite forms}
When a binary quadratic form is indefinite, it will properly represent both positive and negative numbers. How is this fact reflected in the topograph? First, note that there are finitely many forms $[A,B,C]$ of fixed discriminant $D>0$ which satisfy $AC<0$, since the equation $D=B^2-4AC> B^2$ must be satisfied. On the topograph, it can be shown that such forms form a single path called the ``river,'' which separates the regions with positive numbers from the regions with negative numbers. Since there are finitely many forms possible, it is in fact a periodic sequence.

When drawing the topograph of an indefinite form, it is best to ``flatten'' the river and draw it horizontally, with trees branching off above (the positive direction) and below (the negative direction). Start at a vertex $V$ on the river, and travel to the right along it, keeping track at each vertex whether we go left ($L$) or right ($R$). By stopping after the going along the smallest period of the river, we get a sequence of $L$'s and $R$'s. By taking a sequence to be equivalent to cyclic shifts, we assign a sequence to each topograph. 

\begin{definition}
For a topograph $T$ or form $f$ in $T$, define $\Riv (T)=\Riv (f)$ to be this sequence, called the ``river sequence''. It can either be thought of as an infinite (in both directions) periodic sequence of $L$'s and $R$'s, or as a finite sequence by only taking the least period of the topograph river, and declaring two sequences to be the same if they differ by a cyclic shift (for example $LLR\sim LRL\sim RLL$).
Furthermore, the river is said to ``flow'' from left to right when the positive regions are above the river.
\end{definition}

The topograph of $[1,2,-2]$ as displayed in Figure \ref{fig12m2top} has river sequence $RLL$, and Figure \ref{fig1014m5top} gives part of the topograph of $[10,14,-5]$, which has discriminant $396$ and river sequence $RRRLLRL$.
\begin{figure}[ht]
	\centering
		\includegraphics[scale=0.9]{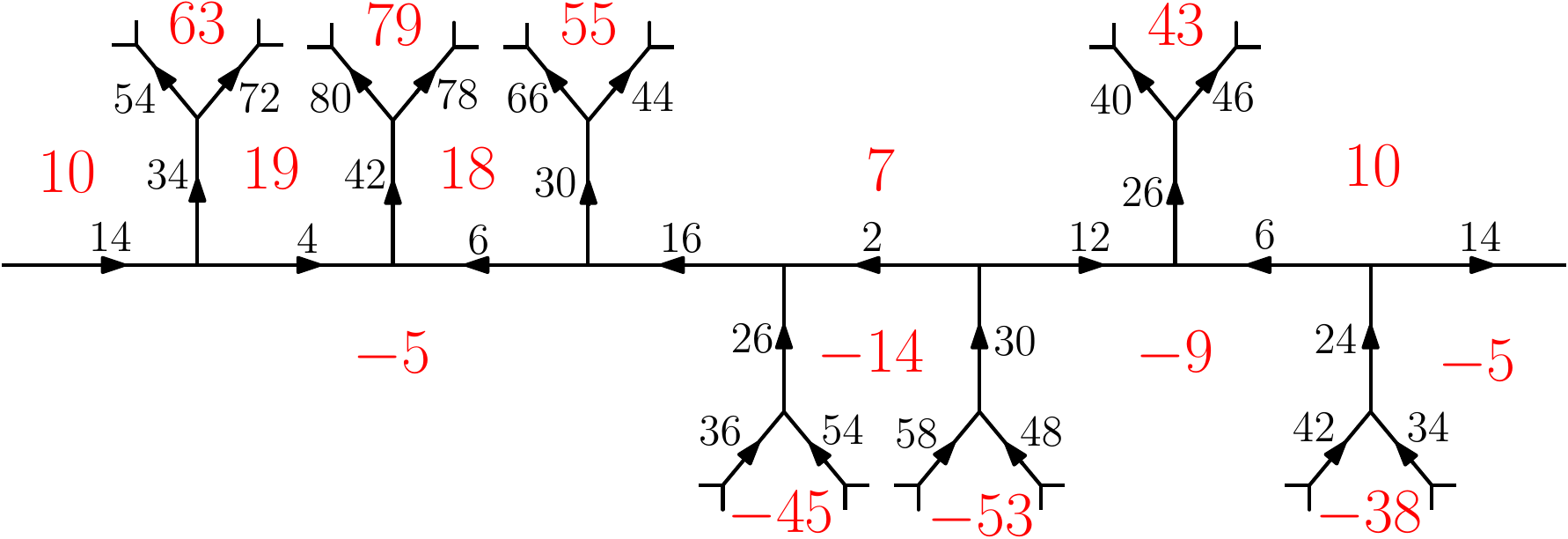}
	\caption{$[10,14,-5]$ topograph.}\label{fig1014m5top}
\end{figure}

Some key questions are:
\begin{itemize}
\item Can we recover a topograph from a river sequence?
\item What is the connection to a topograph where the river ``flows backwards''?
\item What river sequences are possible?
\item Does $\gamma_q$ flow with or against the river?
\end{itemize}

The answer to the first question is yes. Take the smallest period of the river sequence, and using $R,L$ as in Equation \ref{eqnRLdef}, we get the invariant automorph of a form on the river, which thus determines the form and hence the entire topograph. It is important that we constructed this automorph by going \textit{right} in the sequence, i.e. in the direction of the flow of the river. If we had gone to the left (against the flow), we would have also picked up a generator of the automorphism group of $q$, but it would be the inverse of what we define in Definition \ref{defautom}. In fact, this shows that this river sequence (i.e. going left) gives us the river sequence of $-q$, the reciprocal of $q$. Formally put, to get $\Riv (-q)$, take $\Riv (q)$, replace the $L$'s by $R$'s and $R$'s by $L$'s, and reverse the sequence. Thus it is easy to detect if an equivalence class is reciprocal directly from the river sequence. For example, $RRRL$ is not reciprocal, but $RRLL$ is.

In terms of possible sequences, note that there must be at least one $L$ and one $R$, as indefinite forms represent both positive and negative numbers. From the above commentary, we see that any periodic sequence with at least one $L$ and one $R$ is the river of some topograph (noting that the constructed automorph is in fact hyperbolic, so it does correspond to a PIBQF).

The final answer is $\gamma_q$ flows with the river, no matter what $q$ is. To see this, first assume $q=[A,B,C]$ is on the river with $A>0$. Then the entries of $\gamma_q$ are all positive, and two of the enties of $\gamma_q^{-1}$ are negative. The invariant automorph obtained by going along the flow of the river will be a product of $L$'s and $R$'s, and will thus have positive entries, which gives the result in this case. The general result follows from Equation \ref{eqnbqfmatequivcorr}.

\begin{remark}
When studying indefinite quadratic forms, one normally introduces the notion of a reduced form, defines right and left neighbours of reduced forms, and shows that this forms a unique cycle. Taking the common choice of $[A,B,C]$ is reduced if $AC<0$ and $B>|A+C|$, when going along the river, these reduced forms correspond to the forms between the branches switching from the negative to the positive sides of the river (and vice versa). Taking the right/left neighbour just corresponds to going to the next reduced form along the river.
\end{remark}

\begin{remark}\label{remriverroots}
Let $q=[A,B,C]$ be a PIBQF; we can think of the river of the topograph of $q$ as its root geodesic. We have $A=q(1,0)$, and the number appearing in the corresponding place after applying $\gamma_q^n$ will be $A=q(x,y)$, where 
\[\left(\begin{smallmatrix}x\\y\end{smallmatrix}\right)=\gamma_q^n\left(\begin{smallmatrix}1\\0\end{smallmatrix}\right),\]
whence $\frac{x}{y}=\gamma_q^n(\infty)$.
As $n\rightarrow\infty$, $\gamma_q^n(\infty)\rightarrow q_f$, and as $n\rightarrow -\infty$, $\gamma_q^n(\infty)\rightarrow q_s$. Since $\gamma_q$ moves along the river in the direction it is flowing, we can think of the river as flowing from the second root of $q$ to the first root of $q$.
\end{remark}

\begin{remark}\label{remcontinuedfrac}
The river sequence of the topograph is directly connected to continued fractions. Let the continued fraction of $q_f$ be 
\[[a_0,a_1,\ldots]=[a_0,a_1,\ldots,a_s,\overline{a_{s+1},\ldots,a_{s+p}}],\]
where $s$ is the smallest integer such that the continued fraction is periodic after index $s$, and $p$ is the smallest \textit{even} integer such that the sequence has period $p$. Define $\Riv'(q)$ to be the sequence of $0$'s and $1$'s formed by:
\begin{itemize}
\item $s+1\pmod{2}$ repeated $a_{s+1}$ times;
\item $s+2\pmod{2}$ repeated $a_{s+2}$ times;
\item $\cdots$
\item $s+p\pmod{2}$ repeated $a_{s+p}$ times.
\end{itemize} 
Then $\Riv'(q)$ and $\Riv(q)$ are equal, when we identify $1$ with $R$ and $0$ with $L$. For example, if $q=[10,14,-5]$, then the continued fraction of $q_f$ is 
\[[0,\overline{3,2,1,1}],\]
 and 
\[\Riv'(q)=(1,1,1,0,0,1,0),\]
which agrees with Figure \ref{fig1014m5top}.
\end{remark}

\subsection{Intersection numbers in terms of the topograph}\label{sectopoint}
Consider two topographs $T_i$, with chosen pairs $(E_i,V_i)$ of an edge $E_i$ in the graph of $T_i$ and a vertex $V_i$ on $E_i$ ($i=1,2$). Since the underlying graphs are the same, we can superimpose one graph on the other by identifying $V_1$ with $V_2$ and $E_1$ with $E_2$. When we do this, one can consider the interaction of the superimposed rivers $R_1,R_2$.

\begin{proposition}\label{propintriver}
Let the PIBQFs $q_i$ correspond to topographs $T_i$ ($i=1,2$). 
\begin{enumerate}[label=(\roman*)]
\item The root geodesics of $q_1,q_2$ intersect uniquely in the upper half plane if and only if when you superimpose $T_1,T_2$ at $q_1,q_2$ (as above), the superimposed rivers $R_1,R_2$ meet and cross. Furthermore, the root geodesics completely overlap if and only if the rivers $R_1,R_2$ completely overlap.
\item If the root geodesics of $q_1,q_2$ intersect uniquely in the upper half plane, consider the flow of the rivers. Going along the river $R_1$ in the direction it is flowing, if $R_2$ joins the river from the right hand side then the sign of the intersection is $1$, and if it joins from the left the sign is $-1$.
\end{enumerate}
\end{proposition}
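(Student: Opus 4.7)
The plan is to embed both topographs into a common picture by realizing the underlying $3$-regular tree $G$ as the dual of the Farey tessellation of $\mathbb{H}$, so that rivers become combinatorial shadows of the root geodesics, and the intersection question becomes geometric.

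First, identify each region of $G$ with a cusp in $\Q\cup\{\infty\}$ and each edge of $G$ with the Farey geodesic between the two neighbouring cusps; under the simply transitive $\PSL(2,\Z)$-action on $E(G)_{\mathrm{or}}$ established in the paper, the labels on the topograph of a PIBQF $q$ coincide with the values of $q$ on primitive vectors representing the cusps. In particular the sign of the label in a region equals the sign of $q$ at that cusp, so an edge $e$ lies on the river $R_q$ (i.e. $AC<0$ at $e$) if and only if $q$ takes opposite signs at the two cusps touching $e$, if and only if $\ell_q$ separates those cusps, if and only if $\ell_q$ crosses the Farey edge dual to $e$. Hence $R_q$ is precisely the bi-infinite sequence of Farey edges crossed by $\ell_q$, its two ends on $\partial\mathbb{H}$ are $q_s$ and $q_f$, and by Remark \ref{remriverroots} the direction of flow matches the orientation of $\ell_q$ from $q_s$ to $q_f$.

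Second, interpret a superimposition of $T_1$ at $q_1$ with $T_2$ at $q_2$ as the unique graph identification sending both distinguished oriented edges to a common base oriented edge in the standard Farey tessellation. Because the $\PSL(2,\Z)$-action on $E(G)_{\mathrm{or}}$ is simply transitive, this is the same as picking representatives $q_1', q_2'$ of the respective equivalence classes whose root geodesics $\ell_{q_1'}, \ell_{q_2'}$ sit in a single copy of $\mathbb{H}$ with rivers precisely the Farey shadows described above. Part (i) then reduces to the planar-tree fact that two bi-infinite paths in the Farey tree cross transversally if and only if the two hyperbolic geodesics with the same ideal endpoints cross transversally in $\mathbb{H}$, and they overlap on a bi-infinite subpath if and only if the geodesics coincide. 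For part (ii), combine Definition \ref{defintsign} with the orientation identification above: the sign of the ordered intersection of $\ell_1$ with $\ell_2$ is $+1$ exactly when $q_{2,s}$ lies to the right of $\ell_1$ oriented from $q_{1,s}$ to $q_{1,f}$; since $R_i$ enters any crossing from the $q_{i,s}$-side and flows to the $q_{i,f}$-side, this is exactly the condition that $R_2$ joins $R_1$ from the right with respect to the flow of $R_1$.

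The step I expect to be the main obstacle is the overlap case, and ruling out combinatorial pathologies: two Farey shadows can a priori share an isolated edge without crossing, or share a long but finite common portion without the geodesics coinciding. Both must be controlled by the fact that the river is determined by the ideal endpoints $q_s, q_f$ and, via the continued fraction expansion of those endpoints (Remark \ref{remcontinuedfrac}), any common initial segment of two distinct Farey shadows must eventually diverge. Combined with the planarity of the tree, this reduces transverse crossing of the rivers to transverse crossing of the underlying geodesics and forces complete overlap of rivers to correspond to equality of ideal endpoints, hence equality of the root geodesics.
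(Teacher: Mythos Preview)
Your approach is correct but takes a genuinely different route from the paper's. You realize the tree as the dual of the Farey tessellation and identify the river of $q$ with the bi-infinite sequence of Farey edges crossed by $\ell_q$ (its cutting sequence), then reduce both the crossing statement and the sign statement to the topological fact that two arcs in the disk with the same pairs of ideal endpoints cross if and only if those endpoint pairs are linked on the boundary circle. The paper instead gives a short sign-counting argument: it observes that the number of distinct pairs $(\sign q_1(x,y),\sign q_2(x,y))$ realized over coprime $(x,y)$ is $4$, $3$, or $2$ according as the root geodesics cross, are disjoint, or coincide; since superimposing $T_1$ on $T_2$ at $q_1,q_2$ places the value $q_1(x,y)$ in the same region as $q_2(x,y)$, and each river is precisely the sign boundary for its form, the rivers meet and cross exactly when all four sign pairs occur. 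For part (ii) the paper combines Remark \ref{remalternatesign} (the sign equals $\sign q_1(q_{2,f},1)$) with Remark \ref{remriverroots}, whereas you go directly through Definition \ref{defintsign}; both arrive at the same conclusion.

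Your route buys a vivid geometric picture---the river is literally the combinatorial shadow of the root geodesic on the Farey tree---and makes the proposition feel inevitable once that identification is made, at the cost of leaving the key ``planar-tree fact'' and the overlap analysis somewhat informal. The paper's route is more elementary and entirely self-contained: no Farey tessellation and no appeal to planar topology, just the $4$-vs-$3$-vs-$2$ sign count, which disposes of the overlap case and the ``meet but do not cross'' pathology you flagged in a single stroke.
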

\begin{proof}
Consider the set
\[\{(\sign (q_1(x,1)),\sign (q_2(x,1)))\},\]
as $x$ ranges over $\R$. Of the $4$ possible non-zero pairs of signs $(\pm 1,\pm 1)$, we note that
\begin{itemize}
\item All $4$ pairs appear if the root geodesics intersect in the upper half plane;
\item $3$ pairs appear if the root geodesics do not intersect in the upper half plane and do not overlap;
\item $2$ pairs appear if the root geodesics overlap.
\end{itemize}
This also remains true if we instead consider
\[\{(\sign (q_1(x,y)),\sign (q_2(x,y)))\},\]
where $(x,y)$ range over pairs of coprime integers.

When we superimpose the topographs, the numbers in the regions correspond to the values that $q_1,q_2$ take on coprime integers. Since we impose $q_1$ on top of $q_2$, the value of $q_1(x,y)$ is imposed onto the value of $q_2(x,y)$. However, the rivers $R_1,R_2$ determine the boundary between the signs of the numbers in the regions, so that we get $4$ sign combinations if and only if the rivers meet and cross, $3$ if they either never meet or meet and do not cross, and $2$ if they overlap. This completes the first half of the proposition.

\begin{figure}[ht]
	\centering
		\includegraphics[scale=0.7]{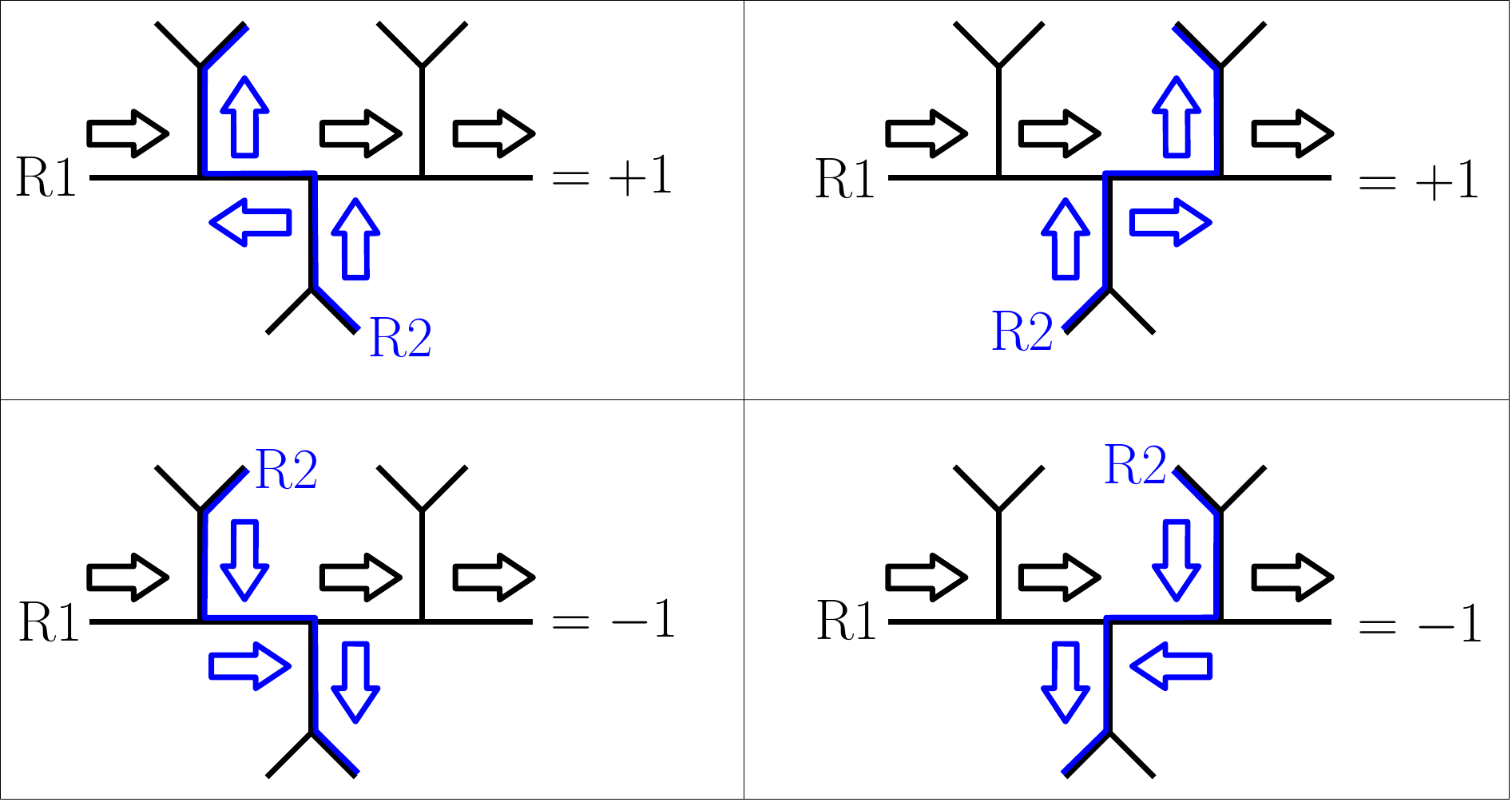}
	\caption{Possible flow configurations.}\label{figflowconfing}
\end{figure}

Figure \ref{figflowconfing} demonstrates the four possible flow configurations, and the claimed corresponding sign. Recall Remark \ref{remriverroots}, where we interpreted the river as flowing between the two roots. Combining this with the interpretation of the sign found in Remark \ref{remalternatesign} gives the above picture, as the second river originates at the second root and flows towards the first root.
\end{proof}

\begin{corollary}\label{corintriver}
Let $q_1,q_2$ be a pair of PIBQFs. Then the unweighted intersection number $\Int(q_1,q_2)$ is equal to the number of ways to superimpose the topographs corresponding to $q_1,q_2$ on top of each other so that the rivers $R_1,R_2$ meet and cross, modulo the periods of the rivers. The weighted intersection number $\Int^{\pm}(q_1,q_2)$ is the same, except we add $1$ when $R_2$ joins $R_1$ from the right, and $-1$ when $R_2$ joins $R_1$ from the left.
\end{corollary}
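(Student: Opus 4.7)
The plan is to combine the double coset interpretation of the intersection number (Proposition \ref{propdoublecoset}) with the single-superimposition analysis of Proposition \ref{propintriver}, using the simply transitive action of $\PSL(2,\Z)$ on $E(G)_{\text{or}}$ as the bridge between the combinatorial and geometric pictures.

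First I would fix base oriented edges $(E_i^0,V_i^0)$ of $T_i$ at which the distinguished form $q_i$ sits, for $i=1,2$. A superimposition of the two topographs is then specified by a unique element $g\in\PSL(2,\Z)$, namely the one that records how $(E_1^0,V_1^0)$ aligns with $(E_2^0,V_2^0)$ once the underlying $3$-regular planar trees are identified. Under this identification, by Proposition \ref{propintriver}(i), the superimposed rivers $R_1$ and $R_2$ meet and cross if and only if $g\cdot\ell_{q_1}$ and $\ell_{q_2}$ intersect transversally in a unique point of the upper half plane, i.e.\ $|\ell_{g\gamma_{q_1}g^{-1}}\pitchfork\ell_{\gamma_{q_2}}|=1$.

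Next I would verify that quotienting by ``the periods of the two rivers'' corresponds exactly to passing to double cosets in $\Gamma_{q_2}\backslash\PSL(2,\Z)/\Gamma_{q_1}$, where $\Gamma_{q_i}=\gamma_{q_i}^{\Z}$. The key fact is that $\gamma_{q_i}$ translates the oriented edge along the flow of its own river by exactly one period (Remark \ref{remriverroots} together with the remark that $\gamma_{q_i}$ flows with the river). Modifying $g$ on the left by $\gamma_{q_2}^{a}$ or on the right by $\gamma_{q_1}^{b}$ therefore reshifts the alignment of one river relative to the other by a full period without changing whether they meet and cross; combined with the triviality of the stabilizer of an oriented edge under the $\PSL(2,\Z)$-action, this pairs superimpositions modulo periods bijectively with double cosets.

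With the translation in hand, the first assertion is immediate from Proposition \ref{propdoublecoset} applied with $f=1$ and the two invariant automorphs $\gamma_{q_1},\gamma_{q_2}$. For the signed version, Proposition \ref{propintriver}(ii) delivers precisely the local sign at each intersection in terms of the flow configuration of the superimposed rivers, matching the geometric sign convention (Definition \ref{defintsign} via Proposition \ref{propsl2zintersect}(ii)(a)), so the same double coset sum applies with $f=\sg$. The main obstacle is the bookkeeping of the bijection between superimpositions-mod-periods and double cosets: one must match the left and right quotients with the correct river, and confirm that both the ``meet and cross'' predicate and the local sign are manifestly invariant under the two period actions, so that the count truly descends to $\Gamma_{q_2}\backslash\PSL(2,\Z)/\Gamma_{q_1}$.
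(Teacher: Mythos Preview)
Your proposal is correct and follows exactly the paper's approach: the paper's proof is the single sentence ``This follows from Proposition \ref{propdoublecoset} and Proposition \ref{propintriver},'' and you have simply unpacked the bijection between superimpositions modulo the two river periods and the double coset space $\Gamma_{q_2}\backslash\PSL(2,\Z)/\Gamma_{q_1}$ that the paper leaves implicit. Your bookkeeping (fixing base oriented edges, using simple transitivity of the $\PSL(2,\Z)$-action on $E(G)_{\text{or}}$, and identifying the period shifts with left and right multiplication by $\gamma_{q_2}$ and $\gamma_{q_1}$) is exactly what is needed to make that one-line citation rigorous.
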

\begin{proof}
This follows from Proposition \ref{propdoublecoset} and Proposition \ref{propintriver}.
\end{proof}

Let's examine the consequences of Corollary \ref{corintriver} a bit more closely. If we have an intersection, we can follow the flow of the river $R_2$ until it meets $R_1$ to find a unique pair of vertices $(V_1,V_2)$ satisfying
\begin{itemize}
\item $V_i$ is on $R_i$ for $i=1,2$;
\item $V_1$ is superimposed on $V_2$;
\item the vertex preceding $V_2$ (in the sense of the flow of $R_2$) is not superimposed on the river $R_1$.
\end{itemize}
Furthermore, given a pair $(V_1,V_2)$ of vertices on the rivers $R_1,R_2$ respectively, there is a unique way to superimpose the topographs so the above is satisfied (though there is no guarantee that the rivers end up crossing). Since the rivers can either be flowing right or left at $V_i$, we have four different behaviours, and display them in Figure \ref{figriverintsign}.

\begin{figure}[ht]
	\centering
		\includegraphics[scale=0.8]{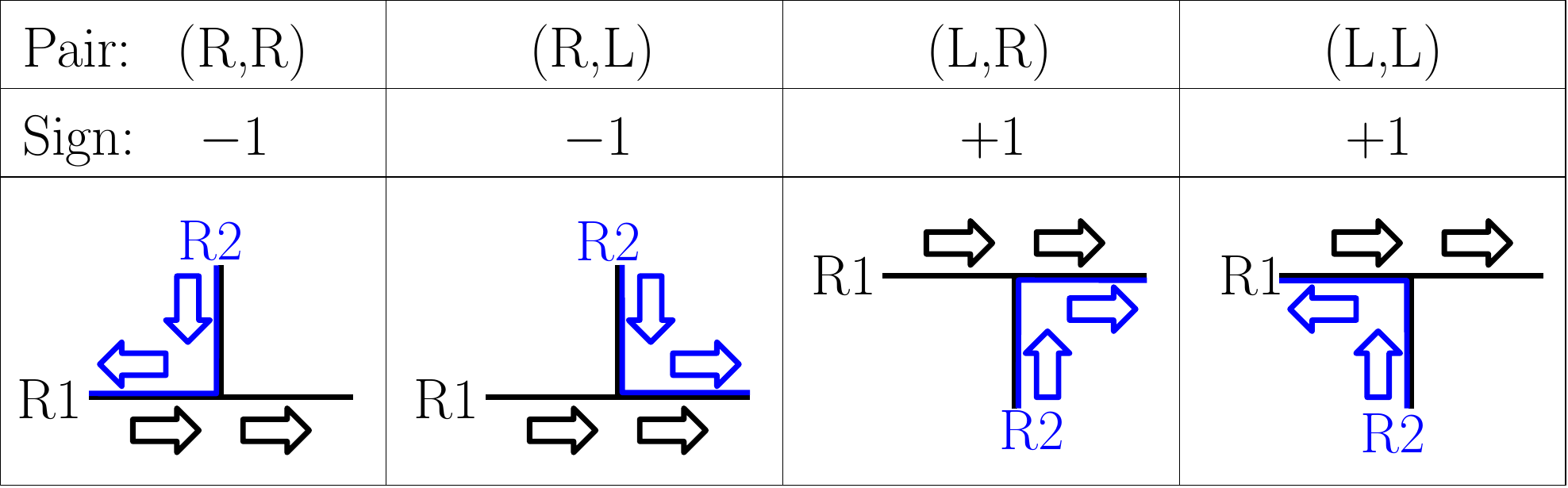}
	\caption{Sign of the intersection.}\label{figriverintsign}
\end{figure}

Put another way, the second river can join from the left (L) or right (R), and flow in the same (S) or opposite (O) direction over the course of their intersection. Figure \ref{figriverintsign} demonstrates the behaviours $LO,LS,RS,RO$ in order from left to right. For each $x\in\{LO,LS,RS,RO\}$, define $\Int^x$ to be the intersection number where we only count intersections of type $x$. In particular, we get that
\begin{equation}\label{eqnintsum}
\Int=\Int^{\text{RS}}+\Int^{\text{RO}}+\Int^{\text{LS}}+\Int^{\text{LO}}\text{ and }\Int^{\pm}=\Int^{\text{RS}}+\Int^{\text{RO}}-\Int^{\text{LS}}-\Int^{\text{LO}}.\end{equation}
Since the river of a reciprocal form is the same except with opposite flow, we can deduce that
\begin{equation}\label{eqnnegg}
\Int^{\text{RS}}(q_1,q_2)=\Int^{\text{LO}}(q_1,-q_2)\text{, and }\Int^{\text{RO}}(q_1,q_2)=\Int^{\text{LS}}(q_1,-q_2).\end{equation}
When we switch the order of $q_1,q_2$, we get
\[\Int^{\text{RS}}(q_1,q_2)=\Int^{\text{LS}}(q_2,q_1)\text{, and }\Int^{\text{RO}}(q_1,q_2)=\Int^{\text{LO}}(q_2,q_1).\]
For a non-trivial identity, we have the following proposition.

\begin{proposition}\label{propintsign0}
The following equalities hold
\[\Int^{\text{RS}}(q_1,q_2)=\Int^{\text{LS}}(q_1,q_2)\text{, and }\Int^{\text{RO}}(q_1,q_2)=\Int^{\text{LO}}(q_1,q_2).\]
\end{proposition}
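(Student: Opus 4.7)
The plan is to exploit the topograph characterization (Corollary \ref{corintriver}) to rephrase both sides combinatorially, and then use the autocorrelation symmetry of periodic binary sequences to match them.

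First, I would classify the crossings by turn patterns. Let $\alpha = (\alpha_i)_i$ be the periodic turn sequence of $R_1$ and $\beta = (\beta_j)_j$ that of $R_2$. Each crossing corresponds to a shared subpath; write it as a pair $(i,k)$ where the shared path runs from $V_i$ to $V_{i+n}$ on $R_1$ and from $W_k$ to $W_{k+n}$ on $R_2$ (with the direction of $R_2$ giving the S/O label). Analyzing the local cyclic order at each endpoint of the shared path (as was done implicitly in the proof of Proposition \ref{propintriver}) shows that the branch carrying $R_2$ off of $R_1$ at $V_i$ is on the side opposite to $\alpha_i$; in particular the ``meet and cross'' requirement becomes $\alpha_i \ne \alpha_{i+n}$, and the four types unpack as
\[\text{RS}\leftrightarrow (S,(L,R)),\quad \text{LS}\leftrightarrow (S,(R,L)),\quad \text{LO}\leftrightarrow (O,(L,R)),\quad \text{RO}\leftrightarrow (O,(R,L)).\]

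Second, I would prove the combinatorial input: for any periodic binary sequence $\alpha$ and any spacing $n$, the number of positions $i$ (per period) with $(\alpha_i,\alpha_{i+n})=(L,R)$ equals the number with $(R,L)$. This is just the identity
\[\sum_i [\alpha_i=L][\alpha_{i+n}=R]=\#\{R\}-\sum_i[\alpha_i=R][\alpha_{i+n}=R]=\sum_i[\alpha_i=R][\alpha_{i+n}=L],\]
valid because summing over a period makes the shift $i\mapsto i+n$ measure-preserving.

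Third, I would upgrade this position-level identity to one for crossings by running the analogous computation simultaneously on $\beta$: the maximality constraints $\alpha_i\ne\beta_k$ and $\alpha_{i+n}\ne\beta_{k+n}$ force $(\beta_k,\beta_{k+n})$ to be the opposite pattern from $(\alpha_i,\alpha_{i+n})$ in the S case, so a crossing counted by $\Int^{\text{RS}}$ corresponds to a pair of positions $(i,k)$ with $(\alpha_i,\alpha_{i+n},\beta_k,\beta_{k+n})=(L,R,R,L)$ and a fixed internal match, while $\Int^{\text{LS}}$ counts the same data with $(R,L,L,R)$. Applying the autocorrelation trick to both $\alpha$ and $\beta$ simultaneously yields a bijection between the two counts (preserving the match length and the internal matching), giving $\Int^{\text{RS}}=\Int^{\text{LS}}$. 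The argument for O crossings is identical since the same endpoint-pattern dichotomy governs RO and LO, with $R_2$ simply traversing the shared subpath in reverse.

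The main obstacle is the third step: extending the one-sequence autocorrelation bijection to a bijection that respects the simultaneous internal match of $\alpha$ and $\beta$ on the interior of the shared subpath. The right way to do this is probably to package the data as pairs of positions on a common word (the shared subword itself) and invoke the identity $\Int(q_1,q_2)=\Int(q_2,q_1)$ together with the swap identity $\Int^{\text{RS}}(q_1,q_2)=\Int^{\text{LS}}(q_2,q_1)$, which essentially amounts to the symmetric role of $\alpha$ and $\beta$. Once those symmetries are combined with the $\alpha$-autocorrelation argument, both equalities in the proposition drop out simultaneously.
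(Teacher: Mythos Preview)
Your setup in the first step is sound, and the autocorrelation identity in step 2 is correct. The gap is exactly where you flag it: step 3. The identity in step 2 counts \emph{all} index pairs $(i,i+n)$ in $\alpha$ with a given turn pattern, not just those that bound a maximal shared subpath with $\beta$; restricting to positions satisfying the internal-match constraint destroys the shift-invariance that made step 2 work, so there is no reason the $(L,R)$ and $(R,L)$ counts should still agree after restriction. Your proposed rescue via the swap identity $\Int^{\text{RS}}(q_1,q_2)=\Int^{\text{LS}}(q_2,q_1)$ together with $\Int(q_1,q_2)=\Int(q_2,q_1)$ is circular: substituting the swap identities into the symmetry of $\Int$ (or the antisymmetry of $\Int^{\pm}$) returns a tautology and never separates the S and O contributions. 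What you would actually need is that $\Int^{\text{RS}}$ itself is symmetric in $(q_1,q_2)$, and that is equivalent to the statement you are trying to prove.

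The paper sidesteps the obstacle by \emph{not} stratifying by the overlap length $n$. Repeat the two rivers to a common length, form the cyclic sequence $C=A+B\pmod 2$ (so $c_k=1$ exactly at disagreement points), and record the subsequence $D=(a_{i_1},\ldots,a_{i_r})$ of river-$1$ turns at those disagreement indices. Each pair of \emph{consecutive} $1$'s in $C$ bounds one maximal shared segment, and the crossing type is read off from the corresponding consecutive pair in $D$: a $0\to 1$ step is RS, a $1\to 0$ step is LS. In any cyclic binary sequence the number of $0\to 1$ steps equals the number of $1\to 0$ steps, so $\Int^{\text{RS}}=\Int^{\text{LS}}$ drops out immediately; the O equality follows by replacing $q_2$ with $-q_2$. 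The point is that passing to consecutive disagreement indices packages the internal-matching constraint automatically, which is precisely what your per-$n$ autocorrelation cannot see.
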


\begin{proof}
Let the river corresponding to $q_1$ be $R_1=(x_1,x_2,\ldots,x_m)$ and the river corresponding to $q_2$ be $R_2=(y_1,y_2,\ldots,y_n)$, where $L$ is represented by $0$ and $R$ by $1$. For now, assume that $\gcd(m,n)=1$. Let $A=(a_1,a_2,\ldots,a_{mn})$ be the sequence $R_1$ repeated $n$ times, and let $B=(b_1,b_2,\ldots,b_{mn})$ be the sequence $R_2$ repeated $m$ times (take indices of $A,B$ modulo $mn$). As $\gcd(m,n)=1$, pairs $(x_i,y_j)$ with $1\leq i\leq m$ and $1\leq j\leq n$ biject with the pairs $(a_k,b_k)$ for $1\leq k\leq mn$. Intersecting the rivers flowing in the same direction at $(a_k,b_k)$ first requires $a_k\neq b_k$. The rivers will then take the same path until we get to the smallest $r\geq 1$ such that $a_{k+r}\neq b_{k+r}$. We will have an intersection if $a_k\neq a_{k+r}$!

In particular, consider the sequence $C=A+B\pmod{2}=(c_1,c_2,\ldots,c_{mn})$. Potential intersections will correspond to consecutive pairs of $1$'s in C, so let $I=\{1\leq i\leq mn:c_i=1\}=\{i_1,\ldots,i_r\}$ with $i_1<i_2<\ldots<i_r$. Form the sequence $D=(a_{i_1},a_{i_2},\ldots,a_{i_r})$, seen cyclically. In the sequence $D$, we have
\begin{itemize}
\item going from $0$ to $0$ corresponds to $R_2$ coming in from the right and leaving to the right;
\item going from $0$ to $1$ corresponds to $R_2$ coming in from the right and leaving to the left;
\item going from $1$ to $0$ corresponds to $R_2$ coming in from the left and leaving to the right;
\item going from $1$ to $1$ corresponds to $R_2$ coming in from the left and leaving to the left.
\end{itemize}
In particular, $\Int^{\text{RS}}(q_1,q_2)$ counts the number of times we change from $0$ to $1$ in $D$, and $\Int^{\text{LS}}(q_1,q_2)$ counts how many times we change from $1$ to $0$ in $D$. As $D$ is periodic, these are equal, hence the result follows in this case.

When $\gcd(m,n)=d>1$, we instead form sequences of length $\frac{mn}{d}=\lcm(m,n)$, and apply the above. Repeat with shifting the $B$ sequence by $1,2,\ldots,d-1$ to the right, and this covers all intersections.

The second statement follows from replacing $q_2$ by $-q_2$ and using Equation \ref{eqnnegg}.
\end{proof}

\begin{corollary}\label{corintiseven}
We have
\[\Int(q_1,q_2)=2(\Int^{\text{RS}}(q_1,q_2)+\Int^{\text{RO}}(q_1,q_2))\text{, and } \Int^{\pm}(q_1,q_2)=0.\]
Furthermore, if either $q_1$ or $q_2$ is reciprocal, then
\[\Int(q_1,q_2)=4\Int^{\text{RS}}(q_1,q_2).\]
\end{corollary}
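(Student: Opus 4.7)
The plan is to derive this corollary as an almost immediate consequence of Proposition \ref{propintsign0}, combined with the decomposition in Equation \ref{eqnintsum} and the reciprocal-swap identities in Equation \ref{eqnnegg}.

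First, I would substitute the equalities $\Int^{\text{RS}}(q_1,q_2)=\Int^{\text{LS}}(q_1,q_2)$ and $\Int^{\text{RO}}(q_1,q_2)=\Int^{\text{LO}}(q_1,q_2)$ from Proposition \ref{propintsign0} directly into the two identities of Equation \ref{eqnintsum}. The unsigned decomposition immediately collapses to $\Int(q_1,q_2)=2(\Int^{\text{RS}}(q_1,q_2)+\Int^{\text{RO}}(q_1,q_2))$, while the signed decomposition telescopes to $\Int^{\pm}(q_1,q_2)=0$. No further work is needed for the first part.

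For the reciprocal case, suppose without loss of generality that $q_2$ is reciprocal, so $q_2\sim -q_2$. Then every intersection-type count is unchanged when $q_2$ is replaced by $-q_2$, since these counts depend only on the equivalence class. Applying Equation \ref{eqnnegg} gives $\Int^{\text{RS}}(q_1,q_2)=\Int^{\text{LO}}(q_1,-q_2)=\Int^{\text{LO}}(q_1,q_2)$, and by Proposition \ref{propintsign0} this equals $\Int^{\text{RO}}(q_1,q_2)$. Thus $\Int^{\text{RS}}=\Int^{\text{RO}}$ in this case, and combining with the first identity yields $\Int(q_1,q_2)=4\Int^{\text{RS}}(q_1,q_2)$. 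The case where $q_1$ is reciprocal is symmetric, using the symmetry of $\Int$ in its two arguments.

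There is essentially no obstacle here: all the substantive combinatorial work has been done in Proposition \ref{propintsign0}. The only point requiring a moment of care is the observation that $q_2\sim -q_2$ implies the intersection-type counts with $q_2$ and $-q_2$ agree, which is immediate from the fact that $\Int$, $\Int^{\text{RS}}$, $\Int^{\text{RO}}$, etc.\ are defined on equivalence classes (via Proposition \ref{propdoublecoset} or Corollary \ref{corintriver}).
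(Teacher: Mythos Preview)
Your proposal is correct and follows essentially the same approach as the paper, which simply states that the result ``follows immediately from Equations \ref{eqnintsum}, \ref{eqnnegg}, and Proposition \ref{propintsign0}.'' You have filled in precisely the details that those three ingredients imply, including the reciprocal case via Equation \ref{eqnnegg}, and your care about the intersection-type counts being well-defined on equivalence classes is appropriate.
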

\begin{proof}
This follows immediately from Equations \ref{eqnintsum},\ref{eqnnegg}, and Proposition \ref{propintsign0}. Note that $\Int^{\pm}(q_1,q_2)=0$ also follows from $\PSL(2,\Z)\backslash\mathbb{H}$ having genus $0$.
\end{proof}

\begin{corollary}\label{corintersectionbounds}
Let $q_1,q_2$ be a pair of PIBQFs with river period lengths $p_1,p_2$. Then
\[4\leq\Int(q_1,q_2)\leq p_1p_2.\]
\end{corollary}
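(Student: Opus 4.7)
The upper bound follows directly from Corollary \ref{corintriver} combined with the discussion immediately after it: each transverse intersection corresponds to a unique pair of vertices $(V_1,V_2)$ with $V_i$ on the river $R_i$, and given any such pair there is a unique superimposition realizing it. Since $R_i$ carries exactly $p_i$ vertices per period, the number of pairs $(V_1,V_2)$ modulo the two periods is $p_1p_2$. Not every pair produces a crossing (some produce rivers that meet without crossing, or never meet), so this is an upper bound.

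For the lower bound, I would first apply Corollary \ref{corintiseven} to write
\[\Int(q_1,q_2)=2\bigl(\Int^{\text{RS}}(q_1,q_2)+\Int^{\text{RO}}(q_1,q_2)\bigr),\]
and then combine Equation \ref{eqnnegg} with Proposition \ref{propintsign0} to rewrite $\Int^{\text{RO}}(q_1,q_2)=\Int^{\text{RS}}(q_1,-q_2)$. Since the pair $(q_1,-q_2)$ is strongly inequivalent whenever $(q_1,q_2)$ is, it suffices to establish the uniform lower bound $\Int^{\text{RS}}(q_1',q_2')\ge 1$ for every strongly inequivalent pair, and apply it to $(q_1,q_2)$ and $(q_1,-q_2)$ independently. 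That yields $\Int^{\text{RS}}+\Int^{\text{RO}}\ge 2$ and hence $\Int(q_1,q_2)\ge 4$.

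To show $\Int^{\text{RS}}(q_1',q_2')\ge 1$ for strongly inequivalent pairs, I would return to the combinatorial construction in the proof of Proposition \ref{propintsign0}: form the sequences $A, B$ of length $\lcm(p_1,p_2)$ (together with the $\gcd(p_1,p_2)-1$ additional shifts of $B$), let $C=A+B\pmod 2$, and define $D$ as the subsequence of $A$ at the positions where $C$ equals $1$. The quantity $\Int^{\text{RS}}$ counts $0\to 1$ transitions in the cyclic sequence $D$, so it suffices to show that $D$ is non-constant for some choice of shift. If instead $D$ were constant (say all $0$) for every shift, then one concludes that the positions of $R$'s in $A$ are contained in the positions of $R$'s in every cyclic shift of $B$; translating back, this forces a containment relation between the $R$-locations of $R_1$ and of every rotation of $R_2$ that, combined with a parity/counting argument, forces $R_1$ and $R_2$ to agree as cyclic sequences (or to be related by the reverse-and-swap operation), contradicting the strong inequivalence hypothesis.

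\textbf{Expected main obstacle.} The technical heart of the argument is the combinatorial forbidden-configuration step in the last paragraph, particularly when $\gcd(p_1,p_2)>1$ so that multiple shifts of $B$ must be handled simultaneously; ruling out the constant-$D$ scenario across all shifts requires carefully tracking how the $R$-positions of the two periodic sequences interact, and extracting from this that $q_1\sim_\pm q_2$. Once that reduction to strong inequivalence is in place, the doubling to get $\Int\ge 4$ via Proposition \ref{propintsign0} and the swap $q_2\mapsto-q_2$ is essentially bookkeeping.
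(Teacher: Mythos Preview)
Your upper bound and your reduction of the lower bound to the inequality $\Int^{\text{RS}}\ge 1$ (applied to both $(q_1,q_2)$ and $(q_1,-q_2)$) are correct and match the paper; in fact you are more explicit than the paper, which simply says ``it suffices to prove $\Int^{\text{RS}}(q_1,q_2)\ge 1$'' and leaves the passage to $\Int^{\text{RO}}$ via $-q_2$ implicit.

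Where you diverge from the paper is in the proof of $\Int^{\text{RS}}\ge 1$, and here you are working much harder than necessary. The paper gives a one-line direct construction: every river sequence contains at least one $L$ and one $R$, hence contains both the subword $01$ and the subword $10$. Pick an occurrence of $01$ in $r_1$ and of $10$ in $r_2$ and superimpose them. In the language of Proposition~\ref{propintsign0}, at the aligned position one has $(a_k,b_k)=(0,1)$ and at the next position $(a_{k+1},b_{k+1})=(1,0)$; these are two consecutive $1$'s in $C$, and the corresponding entries of $D$ are $0$ followed by $1$. That is an RS intersection with empty overlap word $S$. No contradiction argument, no strong inequivalence hypothesis, and no analysis of multiple shifts is needed.

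Your proposed route---assume $D$ is constant for every shift and deduce $q_1\sim_{\pm}q_2$---may well be completable, but the ``forbidden configuration'' step you flag as the main obstacle is genuinely delicate (you must also handle the possibility that $D$ is \emph{empty} for some shifts, i.e.\ $C\equiv 0$), and it introduces a hypothesis (strong inequivalence) that the statement of the corollary does not carry and the paper's argument does not use. The direct construction sidesteps all of this.
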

\begin{proof}
Each possible intersection came from a pair of vertices on the river modulo the periods, which gives the upper bound. For the lower bound, it suffices to prove that $\Int^{RS}(q_1,q_2)\geq 1$. Since the river sequences contain at least one $0$ and one $1$, we can find the subsequence $01$ in the first river, and $10$ in the second. This will correspond to an intersection of type $RS$, completing the proof.
\end{proof}

\subsection{Explicit computation of the intersection number}

The proof of Proposition \ref{propintsign0} gives us a nice algorithm to compute $\Int^{RS}(q_1,q_2)$. Since $\Int^{RS}(-q_2,q_1)=\Int^{RO}(q_1,q_2)$, Corollary \ref{corintiseven} and two applications of Algorithm \ref{algsl2zintcalcOLD} allow us to compute the full intersection number $\Int(q_1,q_2)$.

\begin{algorithm}\label{algsl2zintcalcOLD}
Given a pair of PIBQFs $q_1,q_2$, this algorithm calculates $\Int^{RS}(q_1,q_2)$.
\begin{enumerate}
\item Compute the continued fraction expansions of $q_{1,f}$ and $q_{2,f}$, and use Remark \ref{remcontinuedfrac} to translate this into the river sequences $r_1=(x_1,x_2,\ldots,x_m)$ and $r_2=(y_1,y_2,\ldots,y_n)$ for $q_1$ and $q_2$ respectively (with $L=0$ and $R=1$).
\item Initialize $i=0$, $j=0$, and $I=0$.
\item Increment $i$ until $x_i=0$ or $i=m+1$. In the first case proceed onward, and in the second case return $I$.
\item Increment $j$ until $y_j=1$ or $j=n+1$. In the first case proceed onward, and in the second set set $j=1$ and go back one step.
\item Set $k=0$, and increment $k$ until $x_{i+k}\neq y_{j+k}$ (indices taken modulo $m,n$ respectively). 
\item If $x_{i+k}=1$, increment $I$ by $1$.
\item Return to step 4.
\end{enumerate}
\end{algorithm}

Note that Algorithm \ref{algsl2zintcalcOLD} will terminate, since for each pair $(i,j)$, step 5 will terminate for some $k\leq mn$.

While Algorithm \ref{algsl2zintcalcOLD} is a natural way to compute the intersection number, there is some loss of efficiency. If the same subsequence occurs multiple times in $r_1$ (which will happen more and more frequently as $m$ grows, due to the pigeonhole principle), then corresponding subsequences in $r_2$ will be compared against it multiple times. As such, it should be more efficient to iteratively compute shared subsequences, keeping track of all places that they occur in each sequence. In small individual cases this can be slower, but on average it becomes significantly faster (at the cost of requiring more memory). A recursive implementation of this is described in Algorithm \ref{algsl2zintcalc}. The implemented code in \cite{Qquad} follows this implementation, but does so non-recursively (the recursive variant provides a simpler exposition, but is less efficient).

If $I$ is a set of indices of an $n-$length sequence, let $I+k$ denote $\{i+k\pmod{n}:i\in I\}$, i.e. cyclically incrementing the indices by $k$.

\begin{algorithm}\label{algsl2zintcalc}
Given a pair of PIBQFs $q_1,q_2$, this algorithm calculates $\Int^{RS}(q_1,q_2)$.
\begin{enumerate}
\item Compute the continued fraction expansions of $q_{1,f}$ and $q_{2,f}$, and use Remark \ref{remcontinuedfrac} to translate this into the river sequences $r_1=(x_1,x_2,\ldots,x_m)$ and $r_2=(y_1,y_2,\ldots,y_n)$ for $q_1$ and $q_2$ respectively (with $L=0$ and $R=1$). At the same time, initialize the sets $i_{1,0},i_{1,1},i_{2,0},i_{2,1}$, where $i_{1,0}$ represents the indices $i$ for which $x_i=0$ (and analogously with the other sets).
\item Initialize $S=\cdot$, $T_1=i_{1,0}+1$, $T_2=i_{2,1}+1$, $k=1$, and $I=0$ to store the common subsequence (as a word of $0$'s and $1$'s), indices of the first river, indices of the second river, the current recursion layer, and the intersection number respectively. Assume that $I$ is globally defined.
\item Given $(S, T_1, T_2, k)$, if $T_1$ or $T_2$ is empty, return to the previous layer $k-1$.
\item Increment $I$ by $|i_{1,1}\cap T_1|\cdot |i_{2,0}\cap T_2|$.
\item Set $S'=S0$, $T_1'=i_{1,0}\cap T_1+1$, and $T_2'=i_{2,0}\cap T_2+1$, and call Step 3 with the input $(S', T_1', T_2', k+1)$.
\item Set $S'=S1$, $T_1'=i_{1,1}\cap T_1+1$, and $T_2'=i_{2,1}\cap T_2+1$, and call Step 3 with the input $(S', T_1', T_2', k+1)$.
\item If we are in the base layer $k=1$ of the recursion, output $I$ and exit the algorithm. Otherwise, return to the previous layer $k-1$.
\end{enumerate}
\end{algorithm}
\begin{proof}
In layer $k$ of the algorithm, the indices $T_1$ are the set of $i$ for which the words $x_{i-k}x_{i-k+1}\cdots x_{i-1}$ and $0S$ are equal, and the indices $T_2$ are the set of $i$ for which the words $y_{i-k}y_{i-k+1}\cdots y_{i-1}$ and $1S$ are equal. In step $4$ we add the intersections corresponding to the sequences $0S1$ in $r_1$ and $1S0$ in $r_2$, and in steps 5, 6 we increase the length of $S$ by $1$ and apply recursion. Since the length of $S$ cannot exceed $mn$, the algorithm will terminate.
\end{proof}

To compare Algorithms \ref{algsl2zintcalcOLD} and \ref{algsl2zintcalc}, we took various discriminant ranges for $D_1, D_2$, took $10000$ random trials of pairs $q_1, q_2$ of PIBQFs with discriminants in the respective ranges, computed their rivers, and recorded the average time taken to compute $\Int^{RS}(q_1, q_2)$, given the precomputed rivers (in the sense of Step 1 of Algorithm \ref{algsl2zintcalc}, where the sets $i_{j,k}$ are also precomputed). The output is in Table \ref{tabinumtimes}.

\begin{table}[ht]
\renewcommand{\arraystretch}{1.2}
\centering
\caption{Average time to compute $\Int^{RS}(q_1,q_2)$}\label{tabinumtimes}
\begin{threeparttable}
\begin{tabular}{|c|c|c|c|c|c|c|c|} 
\hline
$D_1$ range & $D_2$ range & $p_{1,avg}$\tnote{a} & $p_{2,avg}$\tnote{b} & $\Int_{avg}^{RS}$\tnote{c} & T(river)\tnote{d} & T(\ref{algsl2zintcalcOLD})\tnote{e} & T(\ref{algsl2zintcalc})\tnote{f} \\ \hline
$[1,1000]$  & $[1,1000]$                     & $48.4$   & $47.8$   & $93.9$    & $0.016$ & $0.025$  & $0.014$\\ \hline
$[1,1000]$  & $[1001,2000]$                  & $48.7$   & $92.4$   & $161.5$   & $0.022$ & $0.050$  & $0.020$\\ \hline
$[1001,2000]$  & $[10^6+1,10^6+10^3]$        & $91.7$   & $2971.4$ & $4552.8$  & $0.338$ & $2.740$  & $1.089$\\ \hline
$[10^6+1,10^6+10^3]$  & $[10^6+1,10^6+10^3]$ & $3014.0$ & $2941.7$ & $83338.1$ & $0.723$ & $87.553$ & $8.069$\\ \hline
\end{tabular}
\begin{tablenotes}
   \item[a] Average length of the river of $q_1$, rounded to one decimal place.
   \item[b] Average length of the river of $q_2$, rounded to one decimal place.
	 \item[c] Average $RS-$intersection number, rounded to one decimal place.
	 \item[d] Average time to compute the rivers in milliseconds rounded to three decimal places.
	 \item[e] Average time for Algorithm \ref{algsl2zintcalcOLD} in milliseconds rounded to three decimal places.
	 \item[f] Average time for Algorithm \ref{algsl2zintcalc} in milliseconds rounded to three decimal places.
\end{tablenotes}
\end{threeparttable}
\end{table}

As the discriminant (and river lengths) scale up, Algorithm \ref{algsl2zintcalc} quickly takes over in efficiency.

\section{Numerical Calculations}\label{secnumcalc}

Algorithms to compute intersection numbers have been implemented in PARI (\cite{PARI}) as part of the larger package \cite{Qquad}. In this section, we present some computations illustrating the results of this paper.

\subsection{Explicit examples}

In Example \ref{exintis8}, we use Algorithm \ref{algsl2zintcalcOLD} on a nice family of quadratic forms.

\begin{example}\label{exintis8}
If $q_1=[1,1,-1]$ and $q_2=[1,n,-1]$ (for $n\geq 2$), then $\Int(q_1,q_2)=8$. 
\end{example}
\begin{proof}
The automorph of $q_1$ is
\[\left(\begin{matrix}1 & 1\\1&2\end{matrix}\right)=RL,\]
and it can be shown that the automorph of $q_2$ is
\[\left(\begin{matrix} 1 & n\\n & n^2+1\end{matrix}\right)=R^nL^n.\]
Therefore, 
\[r_1=(1,0), \qquad r_2=(1,1,\ldots,1,0,0,\ldots,0).\]
Since $q_1$ is reciprocal, by Corollary \ref{corintiseven}, $\Int(q_1,q_2)=4\Int^{\text{RS}}(q_1,q_2)$. In Algorithm \ref{algsl2zintcalcOLD}, we call step 5 on the pairs $(i,j)=(2,x)$ for $1\leq x\leq n$. If $x\leq n-2$, then $k=2$, corresponding to subsequences $010$ and $111$ respectively, and this gives no intersections. If $x=n-1$, then $k=3$, corresponding to subsequences $0101$ and $1100$, giving an intersection. Finally, if $x=n$, then $k=1$, corresponding to subsequences $01$ and $10$, again giving an intersection. Therefore, $\Int^{\text{RS}}(q_1,q_2)=2$, as claimed.
\end{proof}

Algorithm \ref{algsl2zintcalc} and Theorem \ref{mainthm1}/Corollary \ref{maincor1} provide two ways to compute $\Int(D_1,D_2)$, so it is instructive to verify that they agree.

\begin{example}\label{exgzcheck}
We have
\[\Int(5,136)=48.\]
\end{example}
\begin{proof}
The narrow class groups of discriminants $5, 136$ are $1,\Z/4\Z$ respectively, and have representatives
\begin{align*}
\Cl^+(5)= & \{[1,1,-1]\},\\
\Cl^+(136)= & \{[1,10,-9],[-3,10,3],[9,10,-1],[-5,6,5]\}.
\end{align*}
Using Algorithm \ref{algsl2zintcalc}, we compute
\begin{align*}
\Int([1,1,-1],[-3,10,3])= & \Int([1,1,-1],[-5,6,5])= 16,\\
\Int([1,1,-1],[1,10,-9])= & \Int([1,1,-1],[9,10,-1])= 8,
\end{align*}
hence
\[\Int(5,136)=16+16+8+8=48.\]

Alternatively, using Theorem \ref{mainthm1}, we find that for $n\in S_{5,136}$, we have $p(n)\neq 0$ if and only if $|n|\in\{2,10,14,18,22,26\}$. For these $n$, we have
\[p(\pm 2)=2,\qquad p(\pm 10)=8,\qquad p(\pm 14)=6,\qquad p(\pm 18)=4,\qquad p(\pm 22)=2,\qquad p(\pm 26)=2.\]
These sum to $2(2+8+6+4+2+2)=48=\Int(5,136)$, as expected.
\end{proof}

\subsection{Distribution of intersection points and angles}
Given a pair of PIBQFs $q_1,q_2$, lift the intersection points to $\PSL(2,\Z)$ equivalence classes in the upper half plane. Proposition \ref{propsl2zintersect} implies that they are CM points, with the defining quadratic form given explitictly. The points all have discriminants being a square divisor of a number of the form $x^2-D_1D_2<0$, but what more can be said about them? In \cite{Duke88}, Duke considers the images on the modular curve of Heegner points and modular geodesics coming from fundamental discriminants $D$. With reference to convex regions with piece-wise smooth boundary, he proves that the Heegner points are equidistributed as $D\rightarrow -\infty$, and the modular geodesics are equidistributed as $D\rightarrow\infty$. We would like to formulate similar results for the case of intersecting modular geodesics.

Start by fixing a PIBQF $q_1$, and another PIBQF $q_2$. Let $z\in\ell_{q_1}$ and let $\ell=\dot{\ell}_{z,\gamma_{q_1}z}$. The intersection points on $\tilde{\ell}_{q_1}$ lift uniquely to $\ell$, so we can study the distribution of intersections on $\tilde{\ell}_{q_1}$ by lifting to $\ell$.

\begin{figure}[ht]
	\centering
		\includegraphics[scale=0.7]{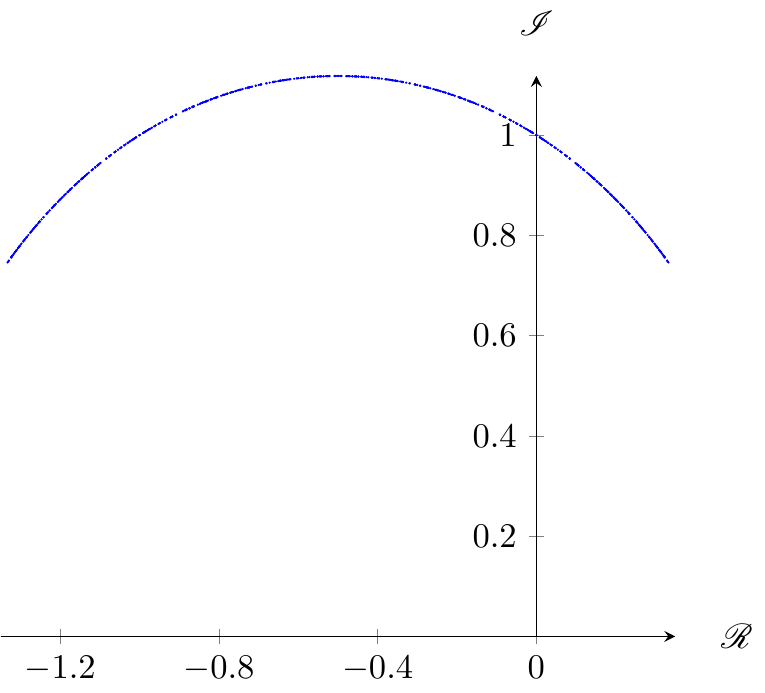}
	\caption{Intersection of $[1,1,-1]$ with discriminant $1000004$.}\label{figintpointscatter}
\end{figure}

A natural guess would be to say that the intersection points become uniformly distributed on $\ell$ as $\disc(q_2)\rightarrow\infty$, and this appears to be true in many examples. However, Example \ref{exintis8} gives a family $q_n=[1,n,-1]$ for which $\disc(q_n)=n^2+4\rightarrow\infty$ and $\Int([1,1,-1],q_n)=8$ for all $n$, which contradicts this. The next reasonable alternative would be to fix $q_1$ and take all forms of discriminant $D$ as $D\rightarrow\infty$. For example, let $q_1=[1,1,-1]$, let $D=1000^2+4=1000004$, and let $z=\frac{-4+\sqrt{5}i}{3}$ (to optimize the symmetry). We find that $h^{+}(D)=52$, there are $1640$ intersection points, and they generate Figure \ref{figintpointscatter}. They seem reasonably well distributed (the imaginary parts are all fairly large, so the effects of the hyperbolic metric are not as obvious), and the ``deficiency'' of intersections between $q_1$ and $[1,1000,-1]$ has been compensated for.

Taking this one step further, let $D=10^{12}+2021$. There are $1467920$ intersections points, and we calculate the hyperbolic distance (along $\tilde{\ell}_{q_1}$) between the image of $z=\frac{-4+\sqrt{5}i}{3}$ and the intersection points. By using $400$ bins, we generate a histogram in Figure \ref{figintpointshist}. The data appears fairly equidistributed, and other examples yield similar results, hence we formalize this statement in a conjecture.

\begin{conjecture}\label{conjintdist}
Let $q$ be fixed, let $D$ be a discriminant, and let $I_{q}(D)$ denote the multiset of points on $\tilde{\ell}_{q}$ that appear as intersections between $q$ and a form of discriminant $D$. Then the set $I_{q}(D)$ is equidistributed (with respect to the hyperbolic metric) on $\tilde{\ell}_{q}$ as $D\rightarrow\infty$.
\end{conjecture}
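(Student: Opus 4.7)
The plan is to deduce Conjecture~\ref{conjintdist} from Duke's equidistribution theorem for closed modular geodesics, after lifting everything to the unit tangent bundle $X=\PSL(2,\Z)\backslash\PSL(2,\R)$. Fix the PIBQF $q$ together with a continuous test function $g\colon\tilde{\ell}_q\to\R$; the goal is
\[\frac{1}{|I_q(D)|}\sum_{p\in I_q(D)}g(p)\;\longrightarrow\;\frac{1}{L_q}\int_{\tilde{\ell}_q}g\,ds\qquad(D\to\infty),\]
where $L_q$ is the hyperbolic length of $\tilde{\ell}_q$. The strategy is to realise both sides as limits of pairings against explicit smooth test functions $F_\varepsilon$ on $X$, and to apply Duke's theorem on $X$ to those test functions.

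Let $Y_q\subset X$ be the embedded two-torus of unit tangent vectors based at points of $\tilde{\ell}_q$, parametrised by $(s,\phi)$, where $s\in\R/L_q\Z$ is arc length on $\tilde{\ell}_q$ and $\phi\in[0,2\pi)$ is the angle to the tangent of $\tilde{\ell}_q$. For each $[q']\in\Cl^{+}(D)$ the closed geodesic-flow orbit $\Gamma_{q'}\subset X$ meets $Y_q$ transversally in a finite set that is in natural bijection with $\tilde{\ell}_q\pitchfork\tilde{\ell}_{q'}$; at such a point the normal component of the tangent to $\Gamma_{q'}$ is $\pm\sin\theta_p$, where $\theta_p$ is the intersection angle of Proposition~\ref{propsl2zintersect}. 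Define $G(s,\phi):=g(s)|\sin\phi|$ on $Y_q$ and let $F_\varepsilon\colon X\to\R$ be obtained by multiplying $G\circ\pi_{Y_q}$ by a bump function of unit integral supported in an $\varepsilon$-tube in the one-dimensional normal direction. A standard tubular-neighbourhood computation yields
\[\lim_{\varepsilon\to 0}\int_{\Gamma_{q'}}F_\varepsilon\,dt=\sum_{p\in\tilde{\ell}_q\pitchfork\tilde{\ell}_{q'}}\frac{G(p,v_{q'}(p))}{|\sin\phi(v_{q'}(p))|}=\sum_{p\in\tilde{\ell}_q\pitchfork\tilde{\ell}_{q'}}g(p),\]
the $|\sin\phi|$ in $G$ exactly cancelling the transversality factor from the tubular integral, while
\[\lim_{\varepsilon\to 0}\int_X F_\varepsilon\,d\mu=\int_{Y_q}G\,d\mu_{Y_q}=\Bigl(\int_0^{2\pi}|\sin\phi|\,d\phi\Bigr)\int_{\tilde{\ell}_q}g(s)\,ds=4\int_{\tilde{\ell}_q}g\,ds.\]

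Next I would invoke the strengthening of Duke's theorem to the unit tangent bundle (obtained, for example, by Chelluri and later subsumed by work of Einsiedler--Lindenstrauss--Michel--Venkatesh): for continuous $F$ on $X$,
\[\frac{1}{L(D)}\sum_{[q']\in\Cl^{+}(D)}\int_{\Gamma_{q'}}F\,dt\;\longrightarrow\;\frac{1}{\mu(X)}\int_X F\,d\mu\qquad(D\to\infty),\]
where $L(D)$ is the total length of the orbits. Applying this to $F_\varepsilon$, using the specialisation $g\equiv 1$ to read off the normalisation $|I_q(D)|\sim 4L_q L(D)/\mu(X)$, and then taking ratios delivers the desired equidistribution for arbitrary continuous $g$.

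The main obstacle is interchanging the two limits $D\to\infty$ and $\varepsilon\to 0$: one must allow $\varepsilon=\varepsilon(D)\to 0$ slowly, which requires an effective version of the unit-tangent-bundle equidistribution with polynomial savings in $D$ and controlled dependence on some Sobolev norm of $F_\varepsilon$. Subconvexity for the relevant Rankin--Selberg $L$-functions should provide such effectivity, but its compatibility with the tubular test functions above needs to be verified carefully, especially as the Sobolev norms of $F_\varepsilon$ blow up as $\varepsilon\to 0$. Two lesser technical points are the extension to non-fundamental discriminants (available through recent refinements of Duke's theorem) and cusp decay, which is a non-issue here because $\tilde{\ell}_q$ is compact and all $F_\varepsilon$ have fixed compact support.
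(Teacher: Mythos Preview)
The statement you are attempting to prove is explicitly labelled a \emph{conjecture} in the paper; the paper supplies no proof of it, only numerical evidence (Figures~\ref{figintpointscatter} and \ref{figintpointshist}). A post-publication remark records that the conjecture was subsequently resolved by Jung and Sardari in \cite{JS21}. So there is no ``paper's own proof'' to compare your proposal against.

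That said, your outline is along exactly the right lines, and is in spirit close to what Jung and Sardari do: lift to the unit tangent bundle, realise the intersection count as a pairing of the packet of closed geodesics of discriminant $D$ against a test function supported near the torus $Y_q$ over $\tilde{\ell}_q$, and appeal to an effective version of Duke's theorem on $\PSL(2,\Z)\backslash\PSL(2,\R)$. You have also correctly isolated the genuine analytic difficulty: the interchange of the limits $D\to\infty$ and $\varepsilon\to 0$, which forces one to control the Sobolev norm of $F_\varepsilon$ against the rate of equidistribution. This is precisely where the work lies, and your sketch does not resolve it; it only names it. In that sense your submission is a plausible \emph{strategy}, not a proof. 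If you want to turn it into a proof you would need to quote a specific effective equidistribution statement (with an explicit Sobolev exponent and an explicit power saving in $D$), compute the Sobolev norm of your tubular bump $F_\varepsilon$ as a power of $\varepsilon^{-1}$, and then optimise $\varepsilon=\varepsilon(D)$ so that the error term still vanishes. The Jung--Sardari paper carries this out; your outline is consistent with their approach but stops short of the quantitative bookkeeping that actually closes the argument.
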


\begin{figure}[!hb]
\centering
\begin{minipage}{.5\textwidth}
  \centering
  \includegraphics[width=.9\linewidth]{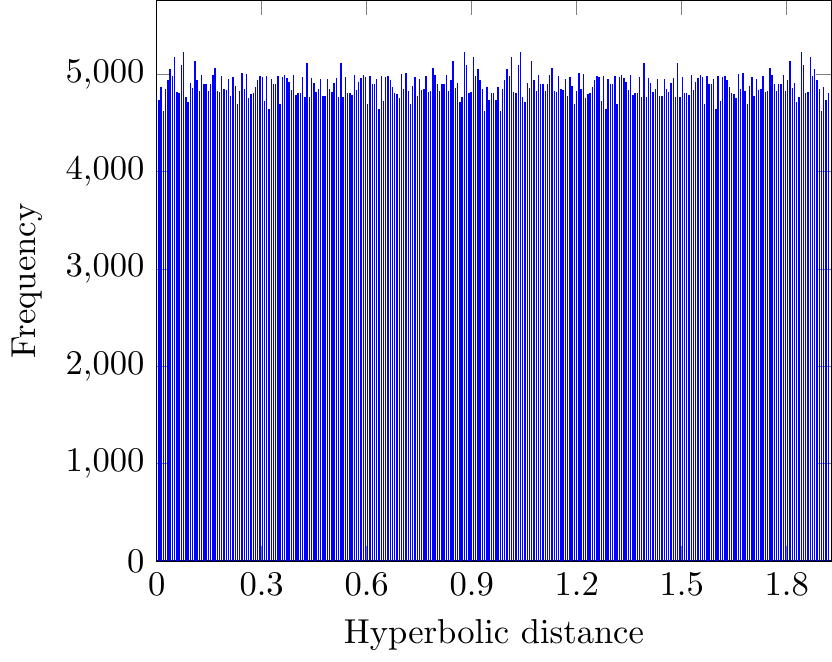}
  \captionof{figure}{\protect\raggedright$1467920$ intersection points.}\label{figintpointshist}
\end{minipage}%
\begin{minipage}{.5\textwidth}
  \centering
  \includegraphics[width=.9\linewidth]{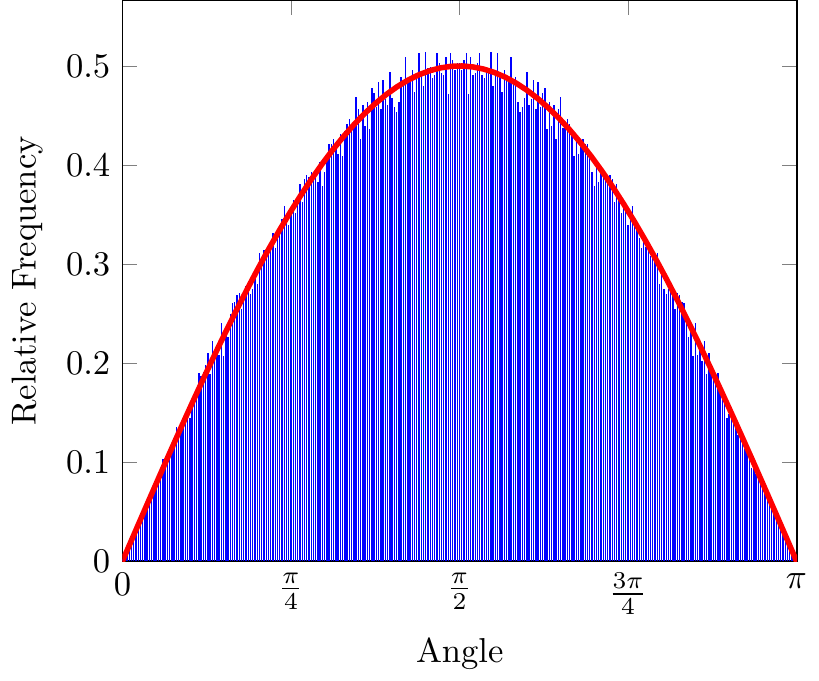}
  \captionof{figure}{$1467920$ intersection angles, scaled.}\label{figintangles}
\end{minipage}
\end{figure}

A similar topic of study would be the distribution of the intersection angles. We take the domain of $\arctan$ to be $[0,\pi)$, and as before, fix $q_1=[1,1,-1]$ and let $D=10^{12}+2021$. By using $300$ bins, we generate a histogram in Figure \ref{figintangles}, where we have scaled the figure to have area $1$.

Small values of $B_{\Delta}$ correspond to angles close to $\frac{\pi}{2}$, and large values correspond to angles close to $0$ (if $B_{\Delta}>0$) or $\pi$ (if $B_{\Delta}<0$). The function $y=\frac{1}{2}\sin(x)$ (which has area $1$ in this range) has been drawn in red in Figure \ref{figintangles}. It matches the data well, and other cases provide similar pictures, hence we formulate this as a conjecture.

\begin{conjecture}\label{conjangdist}
Let $q$ be fixed, let $D$ be a discriminant, and let $\theta_{q}(D)$ denote the multiset of angles that appear as intersections between $q$ and a form of discriminant $D$. As $D\rightarrow\infty$, $\theta_{q}(D)$ tends towards the distribution $\frac{1}{2}\sin(x)$ on $[0,\pi]$.
\end{conjecture}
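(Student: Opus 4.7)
The plan is to reduce Conjecture \ref{conjangdist} to a known equidistribution statement about closed geodesics on the modular surface. First, I would convert the angle picture into a statement about $B_{\Delta}$-values: by Proposition \ref{propsl2zintersect}(ii)(c), the intersection angle $\theta$ of $q_1, q_2$ satisfies $\cos\theta = B_{\Delta}(q_1,q_2)/\sqrt{D_1 D_2}$ after a suitable sign convention on the range $[0,\pi]$. Thus proving that $\theta$ is distributed like $\tfrac12 \sin\theta\, d\theta$ on $[0,\pi]$ is equivalent to proving that the normalized cross-terms $B_{\Delta}/\sqrt{D_1 D_2}$ are uniformly distributed on $[-1,1]$ as $D\to\infty$. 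This reformulation also meshes well with Corollary \ref{maincor1}, which expresses the total intersection number as a sum over $n \in S_{D_1,D_2}$ with explicit weights $p(n)$.

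The main input is the equidistribution of closed modular geodesics in the unit tangent bundle $T^1(\PSL(2,\Z)\backslash\mathbb{H})$ as $D\to\infty$ through fundamental discriminants. Duke's theorem gives equidistribution of the geodesics in $\PSL(2,\Z)\backslash\mathbb{H}$; its strengthening by Einsiedler--Lindenstrauss--Michel--Venkatesh upgrades this to equidistribution of the corresponding orbits in $T^1(\PSL(2,\Z)\backslash\mathbb{H})$ with respect to the Liouville measure $\mu_L$. Applying this to a fixed test function that detects angled intersections with $\tilde{\ell}_q$ should produce the claimed density.

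The bridge between the two is a Santal\'o-type integral-geometric computation. Working in Fermi coordinates $(s,n,\phi)$ in a tubular neighbourhood of $\tilde{\ell}_q$, where $s$ is arclength along the fixed geodesic, $n$ is the signed transverse distance, and $\phi$ is the angular direction relative to $\tilde{\ell}_q$, the Liouville measure decomposes as $ds\wedge dn \wedge d\phi$. For $\epsilon>0$ and a test function $g(\phi)$, consider the Liouville-integrable function $\tfrac{1}{2\epsilon}\chi_{|n|<\epsilon}\, g(\phi)$. A geodesic crossing at angle $\theta$ spends arclength $2\epsilon/|\sin\theta| + O(\epsilon^2)$ in the slab, so for a closed geodesic $C'$ of length $L_{C'}$,
\begin{equation*}
\frac{1}{2\epsilon}\int_0^{L_{C'}}\chi_{|n|<\epsilon}\, g(\phi)\, dt \;\xrightarrow{\epsilon\to 0}\; \sum_{p\in C'\cap \tilde{\ell}_q} \frac{g(\theta_p)}{|\sin\theta_p|}.
\end{equation*}
On the other hand, equidistribution gives that the left-hand side averaged over the geodesics of discriminant $D$ tends to $\tfrac{1}{\mathrm{Vol}(T^1 S)}\,L_C\int_0^{2\pi} g(\phi)\, d\phi$. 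Equating the two and substituting $g(\phi)=f(\phi)|\sin\phi|$ shows that the empirical measure on intersection angles converges to a multiple of $|\sin\phi|\,d\phi$ on $[0,2\pi)$; folding under the unoriented-line identification $\phi\mapsto 2\pi-\phi$ and normalizing yields $\tfrac12 \sin\theta\,d\theta$ on $[0,\pi]$.

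\textbf{Main obstacles.} The hardest step is rigorously justifying the tubular-neighbourhood limit simultaneously with $D\to\infty$: one needs either an effective version of equidistribution (to take $\epsilon$ shrinking as a function of $D$) or a careful diagonal/test-function approximation argument. A second, more technical difficulty is that $\tilde{\ell}_q$ may self-intersect in the orbifold $\PSL(2,\Z)\backslash\mathbb{H}$, so Fermi coordinates are only locally defined; the clean way to handle this is to lift everything to $\mathbb{H}$ using Proposition \ref{propsimultaneouspairinterpretation} and argue $\PSL(2,\Z)$-equivariantly. A third issue is whether the statement extends beyond fundamental discriminants, since Duke's theorem in the classical form requires fundamentality; a purely analytic alternative would be to estimate $\sum_{a\sqrt{D_1 D}<n<b\sqrt{D_1 D}} p(n)$ directly from Theorem \ref{mainthm1} by switching the order of summation in $p(n)=2\sum_{d\mid(D_1 D-n^2)/4}\epsilon(d)$ and invoking P\'olya--Vinogradov or Burgess bounds on the resulting character sums, which would show it is asymptotic to $\tfrac{b-a}{2}\cdot \Int(D_1,D)$, at the cost of a far more delicate analytic argument.
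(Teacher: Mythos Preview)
The paper does not prove this statement: it is explicitly labeled a \emph{conjecture}, supported only by numerical evidence (Figure~\ref{figintangles}) and the reformulation in the paragraph following the conjecture, where the author observes that the claim is equivalent to the equidistribution of $x/\sqrt{D_qD}$ on $[-1,1]$. A post-publication remark notes that the conjecture was subsequently resolved by Jung and Sardari. So there is no in-paper proof to compare your proposal against.

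That said, your reformulation via $\cos\theta = B_{\Delta}/\sqrt{D_1D_2}$ is exactly the one the paper itself records, and your outline---equidistribution in $T^1(\PSL(2,\Z)\backslash\mathbb{H})$ together with a Santal\'o-type kinematic formula---is a credible route and is in the spirit of the Jung--Sardari argument. You have also correctly identified the genuine technical sticking points: (i) the interchange of the $\epsilon\to 0$ and $D\to\infty$ limits, which requires either effective equidistribution or a careful approximation by smooth test functions bounded above and below by tubular indicators; (ii) the orbifold/self-intersection issues, handled by working upstairs in $\mathbb{H}$; and (iii) the restriction to fundamental discriminants inherited from Duke's theorem. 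Your alternative analytic suggestion via Theorem~\ref{mainthm1} and character-sum bounds is plausible for coprime fundamental pairs but would not directly address the conjecture as stated (fixed individual $q$, not a sum over the class group of $D_q$), so the dynamical route is the more natural one here.
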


By looking at $\cos(\theta_{q}(D))$ and recalling Proposition \ref{propsl2zintersect}, Conjecture \ref{conjangdist} is equivalent to showing that $\{\frac{x}{\sqrt{D_qD}}\}$ equidistributes as $D\rightarrow\infty$, where $q$ has discriminant $D_q$, and $x=B_{\Delta}(q,q')$ as $(q,q')$ ranges over the equivalence classes of intersections with $q'$ having discriminant $D$.

\begin{errata}
Conjectures \ref{conjintdist} and \ref{conjangdist} have been resolved by Junehyuk Jung and Naser Sardari in \cite{JS21}!
\end{errata}

\subsection{Distribution of the total intersection number}

Theorem \ref{mainthm1} gives us a formula for $\Int(D_1,D_2)$ when $D_1,D_2$ are coprime and fundamental, but this formula is still somewhat mysterious. For example, it is not even clear that $\Int(D_1,D_2)\neq 0$!

If $D$ is a discriminant, let $R^{+}(D)=\log(T+U\sqrt{D})$ denote the positive regulator associated to $D$ (where $(T,U)$ is the smallest solution to $T^2-DU^2=4$). Since the lengths of closed geodesics corresponding to $D_i$ are $2R^+(D_i)$, it would be reasonable to expect their intersection number to be proportional to $R^+(D_1)R^+(D_2)$. As such, define
\[C_{D_1,D_2}:=\dfrac{\Int(D_1,D_2)}{h^{+}(D_1)h^{+}(D_2)R^{+}(D_1)R^{+}(D_2)}\]
be the average linking of forms of discriminant $D_1,D_2$ divided by $R^+(D_1)R^+(D_2)$. 

We took $100000$ trials with $1\leq D_1\leq 10^4$ and $1\leq D_2\leq 10^6$, and computed $C_{D_1,D_2}$ for each pair. By using $1170$ bins, we produce the histogram Figure \ref{figCD1D2}. There are $49$ data points lying outside of the displayed area.

\begin{figure}[!ht]
	\centering
		\includegraphics[scale=0.8]{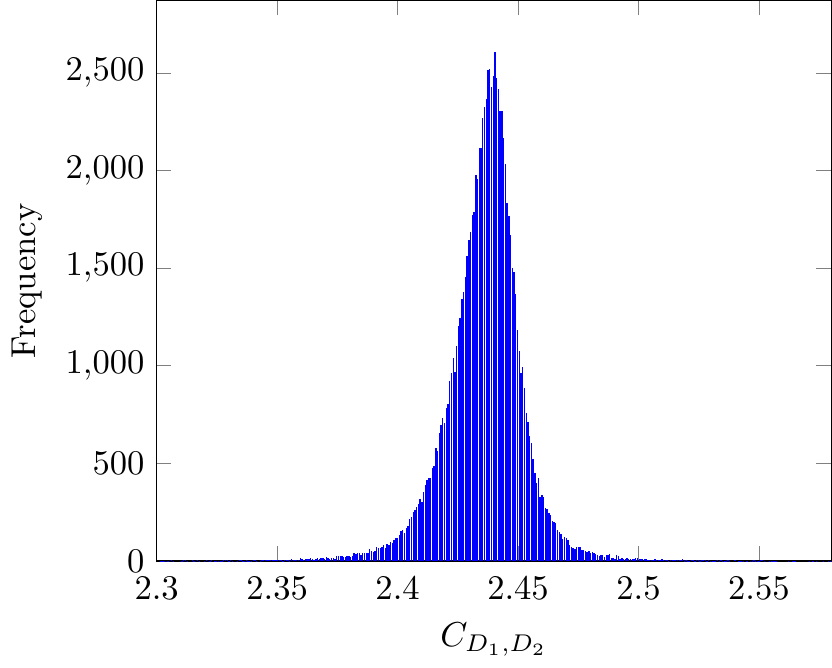}
	\caption{$100000$ trials of $C_{D_1,D_2}$.}\label{figCD1D2}
\end{figure}

The statistics of the data are found in Table \ref{tabcd1d2stats} (all data rounded to 5 decimal places).

\begin{table}[ht]
\centering
\caption{$C_{D_1, D_2}$ for $100000$ random trials.}\label{tabcd1d2stats}
\begin{tabular}{|c|c|c|c|c|c|c|} 
\hline
Minimum & $25^{\text{th}}$ percentile & Median & $75^{\text{th}}$ percentile & Maximum & Average & Standard Deviation\\ \hline
2.17485 & $2.42854$                & $2.43776$ & $2.44528$               & $3.01915$ & $2.43653$ & $0.01727$\\ \hline
\end{tabular}
\end{table}

The data suggests that $C_{D_1, D_2}$ is bounded, and typically very close to $2.437$, which supports the heuristic.

\bibliographystyle{alpha}
\bibliography{../../references}

\begin{thebibliography}{{The}21}

\bibitem[Bir17]{BK17}
George~D. Birkhoff.
\newblock Dynamical systems with two degrees of freedom.
\newblock {\em Trans. Amer. Math. Soc.}, 18(2):199--300, 1917.

\bibitem[DIT17]{DIT17}
W.~Duke, \"{O}. Imamo\={g}lu, and \'{A}. T\'{o}th.
\newblock Modular cocycles and linking numbers.
\newblock {\em Duke Math. J.}, 166(6):1179--1210, 2017.

\bibitem[Duk88]{Duke88}
W.~Duke.
\newblock Hyperbolic distribution problems and half-integral weight {M}aass
  forms.
\newblock {\em Invent. Math.}, 92(1):73--90, 1988.

\bibitem[DV20]{DV20}
Henri Darmon and Jan Vonk.
\newblock Singular moduli for real quadratic fields: A rigid analytic approach.
\newblock {\em Duke Math. J.}, 2020.

\bibitem[Ghy07]{Ghys07}
\'{E}tienne Ghys.
\newblock Knots and dynamics.
\newblock In {\em International {C}ongress of {M}athematicians. {V}ol. {I}},
  pages 247--277. Eur. Math. Soc., Z\"{u}rich, 2007.

\bibitem[GZ85]{GZ85}
Benedict~H. Gross and Don~B. Zagier.
\newblock On singular moduli.
\newblock {\em J. Reine Angew. Math.}, 355:191--220, 1985.

\bibitem[JS21]{JS21}
Junehyuk Jung and Naser~Talebizadeh Sardari.
\newblock Intersecting geodesics on the modular surface, 2021.

\bibitem[Mil71]{Milnor71}
John Milnor.
\newblock {\em Introduction to algebraic {$K$}-theory}.
\newblock Princeton University Press, Princeton, N.J.; University of Tokyo
  Press, Tokyo, 1971.
\newblock Annals of Mathematics Studies, No. 72.

\bibitem[Ric21a]{JR21shim}
James Rickards.
\newblock Counting intersection numbers of closed geodesics on {S}himura
  curves, 2021.

\bibitem[Ric21b]{Qquad}
James Rickards.
\newblock Q- {Quadratic}.
\newblock \url{https://github.com/JamesRickards-Canada/Q-Quadratic}, 2021.

\bibitem[SV18]{SV18}
K.~Spalding and A.~P. Veselov.
\newblock Conway river and {A}rnold {S}ail.
\newblock {\em Arnold Math. J.}, 4(2):169--177, 2018.

\bibitem[{The}21]{PARI}
{The PARI~Group}, Univ. Bordeaux.
\newblock {\em {PARI/GP version \texttt{2.13.2}}}, 2021.
\newblock available from \url{http://pari.math.u-bordeaux.fr/}.

\end{thebibliography}

\end{document}